\documentclass [11pt]{amsart}
\usepackage {amsmath, amssymb, amscd, amsthm, mathrsfs, url ,pinlabel,hyperref, verbatim, lipsum}
\usepackage[text={5.5in,9.1in},centering,letterpaper,dvips]{geometry}
\usepackage{color,multirow,dcpic,latexsym,pictexwd,graphics,pdfpages}
\usepackage{fancyhdr, tikz, setspace}
\usepackage[all,cmtip]{xy}
\usepackage{mathtools}
\usepackage{graphicx} 
\usepackage{subcaption}
\usetikzlibrary{positioning, arrows.meta}
\usepackage{scrextend}

\usepackage[T1]{fontenc}
\usepackage{lmodern}
\DeclareUrlCommand{\bfurl}{}

\newtheorem {theorem}{Theorem}
\newtheorem {lemma}[theorem]{Lemma}
\newtheorem {definition}[theorem]{Definition}
\newtheorem {proposition}[theorem]{Proposition}
\newtheorem {corollary}[theorem]{Corollary}

\theoremstyle{remark}

\newtheorem {remark}[theorem]{Remark}

\numberwithin{equation}{section}
\numberwithin{theorem}{section}

\title{On nests and large components of random real algebraic curves}
\author{Turgay Bayraktar}
\thanks{This work is partially supported by T\"{U}B\.{I}TAK 1001 project 124F150} 
\address{Faculty of Engineering and Natural Sciences, Sabanc{\i} University, Tuzla, Istanbul, T\"{u}rkiye}
\email{\href{mailto: tbayraktar@sabanciuniv.edu.tr}{tbayraktar@sabanciuniv.edu.tr}}
\author{Ali Ula\c{s} \"{O}zg\"ur K{\.i}\c{s}{\.i}sel}
\address{Department of Mathematics, Middle East Technical University, Ankara, 06800, T\"{u}rkiye}
\email{\href{mailto: akisisel@metu.edu.tr}{akisisel@metu.edu.tr}}

\date{}
\keywords{random algebraic geometry, random polynomial, barrier method, nesting, large components} 
\subjclass[2020]{14P25, 32A60, 60D05}

\begin{document}

\begin{abstract}
We develop a variant of the barrier method in order to address questions about topology of Kostlan random real algebraic plane curves. In particular we prove that the expected number of connected components of the curve of length 
at least $\displaystyle{O\left(\sqrt{d^{-1}\log \log d}\right)}$ grows to infinity with $d$, and likewise, the expected number of nests of the curve of depth at least $\displaystyle{O\left(\log\log d\right)}$ grows to infinity with $d$. In another direction, we adapt an $L^{\infty}$-norm bound result of Shifmann and Zelditch to subspaces and employ it to obtain a lower bound for the probability that a finite number of points remain all in different components of the complement of a large degree random curve. 
\end{abstract}
\maketitle

\section{Introduction} 
Random real algebraic geometry is flourishing in several directions as a mathematical subject. One of the important avenues is to understand the topology of random real subvarieties, about which certain essential results were established in \cite{FLL}, \cite{GW1}, \cite{GW2}  and other works in the literature. Of particular interest is on finding lower bounds on the expected number of connected components, or more generally on finding lower bounds on the expected Betti numbers of the real part of the algebraic variety. Most of the lower bound estimates discovered for answering these questions so far use the ``barrier method'', which was developed in \cite{NS} for random nodal sets and then used in \cite{GW2} for Kostlan random real algebraic sets. This method comprises of starting from a reference polynomial whose real zero locus has a desired topological property and then perturbing the reference polynomial so that these perturbations retain the aforementioned property. In order for the properties in question to be stable under sufficiently large perturbations so as to give meaningful lower bounds on expectation, often a reference polynomial with certain extremal properties is used to begin with. Typically, these are polynomials constructed by using Hörmander peak sections or by using the Bergman kernel, so that their $L^{\infty}$-norms are large compared to their $L^2$-norms and their pointwise norms are concentrated in the neighborhood of interest.   

In this paper, we develop a variant of the barrier method and use it to address certain questions about large components and nests of Kostlan random real algebraic plane curves, as well as a question about the probability of finitely many points in $\mathbb{R}P^2$ all to remain in different connected components of the complement of a random curve. The curves are randomized with respect to the unitary invariant Kostlan distribution which is induced in turn by the Fubini-Study metric. The precise definitions are recalled in Section \ref{preliminaries}, which contains the preliminaries needed for the rest of the paper.

By results of \cite{GW2}, in every ball of Fubini-Study radius $\displaystyle{d^{-1/2}}$, any fixed configuration of ovals occurs in a random curve  with positive probability independent of $d$, for $d$ sufficiently large. By using this fact, one can show that the expected number of components of length at least $\displaystyle{O\left(d^{-1/2}\right)}$ is bounded below by $cd$ for some positive constant $c$ independent of $d$, and in fact this is optimal in terms of the asymptotic order, because of the upper bounds proven in \cite{GW1}. There remains the question of presence of larger components than order $\displaystyle{d^{-1/2}}$. We give a partial answer to this question in Theorem \ref{largecomponents}, which states that for any $\epsilon>0$, the expected number of connected components of length at least $\displaystyle{O\left(\sqrt{d^{-1}\log(\epsilon \log d)}\right)}$ is bounded below by $\displaystyle{ \frac{cd^{1-\epsilon}}{\log(\log d)}}$, where $c$ is a positive constant independent of $d$. In particular, (provided that $\epsilon$ is small) this expectation goes to infinity as $d$ grows. 

A second important question about the topology of a real algebraic smooth plane curve is on the presence of nests of ovals. Any oval of a curve in $\mathbb{R}P^2$ has a well-defined interior which is diffeomorphic to a disk, and an exterior which is diffeomorphic to a M\"obius band. A finite collection of ovals is said to form a nest if they can be ordered such that each oval in the sequence contains all the preivious ones in its interior. The number of ovals in such a nest is called its depth. There is an easy deterministic upper bound on the depth of any nest in any degree $d$ smooth plane curve, which is $\displaystyle{d/2}$. This can be proven as an immediate application of Bezout's theorem by intersecting the curve with a line passing through the interior of the innermost oval in the nest. For Kostlan random real algebraic curves, Diatta and Lerario in \cite{DL} proved, by low-degree approximations of polynomials, that the very deep nests are exponentially rare, more precisely the probability of finding a nest of order $O(d)$ exponentially decays to zero (and so does the expected number of such nests). In the lower bound direction, the aforementioned result in \cite{GW2} about the presence of any fixed configuration of ovals in a neighborhood of radius $\displaystyle{d^{-1/2}}$ with positive probability can be used to deduce that, given any finite number $M$, the expected number of nests of depth at least $M$ grows at least (and in fact, also, by \cite{GW1}, at most) as $O(d)$. 
In this paper, we give an improvement of this result in the sense that the expected number of nests with depth growing to infinity with $d$ also grows to infinity with $d$. More precisely, in Theorem \ref{expecteddepth} we prove that for any $\epsilon>0$ the expected number of nests of depth at least $O(\log(\epsilon\log d))$ in a Kostlan random real plane curve is bounded below by $\displaystyle{\frac{cd^{1-\epsilon}}{\log(\log d)}}$ for some positive number $c$ independent of $d$. 

In order to prove these results on the topology of random real curves, we develop a variant of the barrier method, differing from the one in \cite{GW2} in several aspects. First of all, instead of controlling the $C^{1}$-norm of the polynomials in a given region as in \cite{GW2}, we control the sup-norm only on the boundary of the region. This relaxation has the price that the zero locus of the perturbed polynomial does not have to be isotopic to the zero locus of the reference polynomial, however, we make use $\mathbb{Z}_{2}$-homology invariants for which the control of the sup-norm on the boundary happens to suffice - this is explained in subsection \ref{annulus}. The sup-norm bounds on the random perturbation polynomials are obtained by using the mean-value inequality as in \cite{NS} or \cite{GW2}; we use the formulation in \cite{KW}. In subsection \ref{annulus}, the mean-value inequality is slightly adapted to a neighborhood of scale $\displaystyle{\sqrt{d^{-1}g(d)}}$, where $1\ll g(d)\ll d$, so that it can be used in applications such as Theorems \ref{largecomponents} and   \ref{expecteddepth}. For the applications on nests and Theorem \ref{expecteddepth}, the selection of the reference polynomial poses a delicate problem: Naively forming a reference polynomial by multiplying quadratic equations so as to give nested ovals results in a polynomial with a very poor control on its $L^2$ and sup-norms. Instead, by a careful use of Chebyshev polynomials of the first kind, we overcome this problem and obtain a reference polynomial with the desired properties. The definition and basic properties of Chebyshev polynomials of the first kind are recalled in subsection \ref{subsectionChebyshev}, whereas the reference polynomial is constructed in subsection \ref{Chebyshevimprove}.  
 
A different approach for controlling the global $L^{\infty}$-norm of a random polynomial was given by Shifmann and Zelditch in \cite{SZ}, where it was proven that the ratio of the $L^{\infty}$-norm to the $L^2$-norm of a Kostlan random polynomial rarely exceeds $O(\sqrt{\log d})$. Their method works more generally on any K\"{a}hler manifold equipped with a positive hermitian line bundle. This sup-norm bound is not as sharp as what the mean-value inequality gives in small neighborhoods, however it has the novel feature of providing a global bound, hence it can be used for simultaneous perturbations at far-away points. As a showcase of how this ``global barrier method'' can be applied to a geometric problem, we prove in Theorem \ref{finitelymanypoints} that for any positive integer $m$, and $\epsilon>0$, and $\displaystyle{p_{1},\ldots,p_{m}\in \mathbb{R}P^2}$ with the Fubini-Study distances between any two of them greater than $2d^{-\frac{1}{2}+\epsilon}$, there exists $\beta(m)>0$ such that the probability of all $p_{i}$ to remain in different connected components of the complement of a Kostlan random real plane curve is at least $\displaystyle{d^{-\beta(m)}}$ for $d$ sufficiently large. The use of this global barrier method requires a technical enhancement of a theorem in \cite{SZ} of Shifmann and Zelditch so that it is adapted also to work in a proper subspace of the complete linear system. We state and prove this enhancement in Theorem \ref{lem:hyperplane-tail} for hyperplanes and in Theorem \ref{thm:alpha} for general subspaces, which might be results of interest on their own. It remains a question of interest whether for intermediate-sized neighborhoods one could benefit from interpolating between the mean-value inequality for small neighborhoods and Shifmann-Zelditch type global bounds for the whole manifold. 

\section{Preliminaries} \label{preliminaries} 

\subsection{The Fubini-Study metric on $\mathbb{C}P^2$} \label{FS}
Let $\mathbb{C}P^2$ denote the complex projective plane. The tautological line bundle 
$$\tau=\mathcal{O}_{\mathbb{C}P^{2}}(-1)=\{([\ell],v)| v\in \ell\}\subset \mathbb{C}P^2 \times \mathbb{C}^3$$is a rank 1 subbundle of the trivial rank $3$ bundle on $\mathbb{C}P^2$, hence it inherits a hermitian product from the standart one on $\mathbb{C}^3$, called the Fubini-Study hermitian product. This in turn canonically induces a Fubini-Study hermitian product $h_d$ on every line bundle $\mathcal{O}_{\mathbb{C}P^2}(d)\cong(\tau^{*})^{\otimes d}$ on $\mathbb{C}P^{2}$ for all integer values of $d$. 

We recall the expression of this hermitian product in local charts. Let $[X_{0}:X_{1}:X_{2}]$ denote homogeneous coordinates on $\mathbb{C}P^2$. Let $U_{0}$ be the affine chart defined as $U_{0}=\{[X_{0}:X_{1}:X_{2}]\in \mathbb{C}P^2 | X_{0}\neq 0 \}\cong\mathbb{C}^2$. We identify $z=(z_{1},z_{2})\in \mathbb{C}^2$ with $[1:z_{1}:z_{2}] \in \mathbb{C}P^2$. For every $d\geq 0$, the complex vector space $H_{d}$ of polynomials in two variables of degree at most $d$ in $z_{1}, z_{2}$ (or equivalently, homogeneous polynomials in $X_{0},X_{1},X_{2}$ of degree $d$) has dimension $\displaystyle{N_{d}={d+2 \choose 2}}$ and it can be canonically identified with $H^{0}(\mathbb{C}P^{2}, \mathcal{O}_{\mathbb{C}P^2}(d))$. Under this identification, the formula for the pointwise Fubini-Study norm in affine coordinates becomes 
\begin{equation} \label{pointwise} 
 \|P(z)\|^2_{FS} =     \frac{|P(1,z)|^2}{(1+\|z\|^2)^d}. 
 \end{equation}
See \cite{KW}, subsection 2.4 for a detailed discussion. 

The curvature form $\omega_{FS}$ of the Fubini-Study metric on the hyperplane bundle $L=\tau^{*}=\mathcal{O}(1)$ induces a unitary invariant Riemannian metric on $\mathbb{C}P^2$, hence it endows $\mathbb{C}P^2$ with the notions of Fubini-Study volume and Fubini-Study distance. Regarding these differential geometric notions, we note here the following well-known results.   

\begin{lemma} \label{FSdistance}
(a) For every $z\in \mathbb{C}^2 \cong U_{0}$, $\displaystyle{dVol_{FS|_{z}}=\frac{dVol}{(1+\|z\|^2)^3}}$. In particular, $\displaystyle{Vol_{FS}(\mathbb{C}P^2)=\pi^2/2}$

(b) For every $z\in \mathbb{C}^2$, $\tan(d_{FS}(0,z))=d(0,z)$ where $d_{FS}$ denotes Fubini-Study distance and $d$ denotes Euclidean distance. 
\end{lemma}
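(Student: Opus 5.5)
The plan is to compute the Fubini-Study Kähler form explicitly in the affine chart and read off both statements from it.

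In $U_0$ the Fubini-Study form is $\omega_{FS}=\frac{i}{2}\partial\bar\partial\log(1+\|z\|^2)$. The normalization is chosen to be compatible with the stated total volume $\pi^2/2$; the curvature form of $h_1$ in the convention with $\frac{i}{2\pi}$ differs only by a constant. I will write $\omega_{FS}=\frac{i}{2}\sum g_{j\bar k}\,dz_j\wedge d\bar z_k$ with
$$g_{j\bar k}=\frac{(1+\|z\|^2)\delta_{jk}-\bar z_j z_k}{(1+\|z\|^2)^2}.$$
This matrix has the form $(1+\|z\|^2)^{-1}\left(I-\frac{\bar z z^{T}}{1+\|z\|^2}\right)$. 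The rank-one perturbation has eigenvalue $1-\frac{\|z\|^2}{1+\|z\|^2}=\frac{1}{1+\|z\|^2}$ in the direction of $\bar z$ and eigenvalue $1$ on the orthogonal complement. Hence
$$\det(g_{j\bar k})=(1+\|z\|^2)^{-2}\cdot(1+\|z\|^2)^{-1}=(1+\|z\|^2)^{-3}.$$
Since $\omega_{FS}^2/2!=\det(g)\,dVol$ (the Euclidean volume form on $\mathbb{C}^2$), this gives part (a).

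For the total volume, the complement of $U_0$ is a line, which has measure zero. Passing to polar coordinates in $\mathbb{R}^4$, where the unit sphere $S^3$ has area $2\pi^2$, I get
$$\int_0^\infty\frac{2\pi^2r^3}{(1+r^2)^3}\,dr.$$
With $u=r^2$ this becomes $\pi^2\int_0^\infty\frac{u}{(1+u)^3}\,du=\pi^2\left(\tfrac12-\tfrac12\cdot\tfrac12\cdot\ldots\right)$. Computing directly, $\int_0^\infty u(1+u)^{-3}\,du=1-\tfrac12=\tfrac12$, which gives $\pi^2/2$.

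For part (b), I will use the unitary invariance of the metric: the stabilizer $U(2)\subset U(3)$ of $[1:0:0]$ acts transitively on spheres around $0$ in $\mathbb{C}^2$. This reduces the problem to the points $z=(t,0)$ with $t>0$.

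Next I need the ray $s\mapsto(s,0)$ to be a minimizing geodesic. It is the fixed set of the isometries $(z_1,z_2)\mapsto(\bar z_1,-z_2)$ and $(z_1,z_2)\mapsto(z_1,-z_2)$, so it is totally geodesic in the line $z_2=0$ and then real. Alternatively, I can compare any curve $\gamma$ with its radial projection: the metric in radial direction is $g(\partial_r,\partial_r)=\frac{1}{(1+r^2)^2}$, since $\partial_r$ is the $\bar z$-eigendirection, with eigenvalue $\frac{1}{1+r^2}\cdot\frac{1}{1+r^2}$. Thus any curve from $0$ to a point at Euclidean radius $t$ has length at least
$$\int_0^t\frac{dr}{1+r^2}=\arctan t,$$
and this bound is attained by the radial segment. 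Hence $d_{FS}(0,z)=\arctan\|z\|$, which is (b).

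The main care point is the normalization constant. The two statements fix it: with $\omega=\frac{i}{2}\partial\bar\partial\log(1+\|z\|^2)$, the Riemannian metric restricted to real radial directions is $\frac{dr^2}{(1+r^2)^2}$, matching both the volume $\pi^2/2$ and $\tan d_{FS}=d$. I will state this convention explicitly and cite \cite{KW} for it.
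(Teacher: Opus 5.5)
Your proof is correct. The paper gives no argument for this lemma; it only cites Lemmas 2.11 and 2.12 of \cite{KW}. So your proposal is a self-contained substitute rather than a variant of something in the text. Beyond the citation, it makes explicit a convention the paper leaves implicit. The phrase ``the curvature form'' is only defined up to a constant, and the density in (a) forces $\omega_{FS}=\frac{i}{2}\partial\bar\partial\log(1+\|z\|^2)$, since rescaling $\omega_{FS}$ by $c$ rescales $\omega_{FS}^2/2$ by $c^2$. Part (b) then holds in the same normalization. Your computations all check out: $\omega_{FS}^2/2=\det(g_{j\bar k})\,dVol$, $\det(g_{j\bar k})=(1+\|z\|^2)^{-3}$, $\int_0^\infty u(1+u)^{-3}\,du=\frac12$, and $|\partial_r|_{FS}^2=(1+r^2)^{-2}$.

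A few points in (b) should be tightened.

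The length bound needs $|v|_{FS}\ge |dr(v)|/(1+r^2)$ for every tangent vector $v$, not only for $v=\partial_r$. This follows from your eigen-decomposition: write $v=a\,z/\|z\|+u$ with $u$ Hermitian-orthogonal to $z$. Then $|v|_{FS}^2=|a|^2(1+r^2)^{-2}+|u|^2(1+r^2)^{-1}$ and $dr(v)=\mathrm{Re}\,a$. State this explicitly.

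A competing curve in $\mathbb{C}P^2$ may leave $U_0$. Truncate it at its first hitting time of the sphere $\{\|z\|=t\}$; before that time it stays in $U_0$.

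The fixed-point-set remark only shows that the real line is a geodesic, not that it minimizes, so drop it. The reduction to $z=(t,0)$ is likewise unnecessary, since the comparison argument works for every $z$ directly.

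Finally, delete the garbled expression $\pi^2\bigl(\tfrac12-\tfrac12\cdot\tfrac12\cdot\ldots\bigr)$ that precedes the direct computation.
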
 
For a proof of these statements, see \cite{KW}, Lemma 2.11 and Lemma 2.12. 

\subsection{Kostlan distribution} 

The pointwise Fubini-Study hermitian product defined in subsection \ref{FS} defines an $L^2$-Fubini-Study hermitian product defined on the complex vector space $H_{d}$ by the following formula. 
\begin{equation}\label{hermprod}
\langle P,Q\rangle_{2}=\frac{1}{Vol_{FS}(\mathbb{C}P^{2})}\int_{\mathbb{C}P^{2}} h_{d}(P(z),Q(z))_{FS} dVol_{FS}. 
\end{equation}  
This in turn endows $H_{d}$ with the following Gaussian probability measure, called the Kostlan distribution, 
\begin{equation*} \label{Kostlan} 
d\mu_{\mathbb{C}}(P)=\frac{e^{-\|P\|^2_{2}}}{\pi^{N_d}}dP, 
\end{equation*} 

\noindent where $dP$ denotes the Lebesgue measure. Writing each complex polynomial as the sum of a polynomial with real coefficients and another one with purely imaginary coefficients, one obtains an orthogonal direct sum decomposition 
\begin{equation}\label{realim}
H_{d}=Re(H_{d})\oplus iIm(H_{d}).
\end{equation}
The induced probability measures on the real and imaginary parts take the form 
\begin{equation}\label{realKostlan}
d\mu_{\mathbb{R}}(P)=d\mu_{i\mathbb{R}}(P)=\frac{e^{-\|P\|^2_{2}}}{\sqrt{\pi}^{N_d}}dP 
\end{equation} 

The monomials in $H_{d}$ turn out to provide an orthogonal basis for the $L^2$-Fubini-Study inner product. More precisely, the following lemma holds. (For a proof, see \cite{Kos}, Theorem 4.1.)
\begin{lemma} \label{L2norms} 
For every $\mathbf{i}=(i_{0},i_{1},i_{2})\in \mathbb{N}^{3}$,
$\displaystyle{ \|X^{\mathbf{i}}\|_{2}^2 = {d+2 \choose \mathbf{i}}^{-1}, }$
where $d=i_{0}+i_{1}+i_{2}$, $X^{\mathbf{i}}=X_{0}^{i_{0}}X_{1}^{i_{1}}X_{2}^{i_{2}}$ and 
\[ {d+2 \choose \mathbf{i}}= \frac{(d+2)!}{i_{0}!i_{1}! i_{2}! 2! }\cdot\]
Moreover, if $\mathbf{j}=(j_{0},j_{1}, j_{2})\neq \mathbf{i}$, then $\langle X^{\mathbf{i}},X^{\mathbf{j}}\rangle_{2}=0$.  \hfill $\Box$ 
\end{lemma}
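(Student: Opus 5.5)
Since the hermitian product \eqref{hermprod} is unitary invariant, the cleanest route is to compute the integral directly in the affine chart $U_{0}$, which is dense in $\mathbb{C}P^2$ with complement of measure zero. For homogeneous monomials $X^{\mathbf{i}}$ and $X^{\mathbf{j}}$ of degree $d$, the dehomogenizations are $z^{\alpha}=z_1^{i_1}z_2^{i_2}$ and $z^{\beta}=z_1^{j_1}z_2^{j_2}$. By \eqref{pointwise} and Lemma \ref{FSdistance}(a),
\[
\langle X^{\mathbf{i}},X^{\mathbf{j}}\rangle_{2}=\frac{2}{\pi^2}\int_{\mathbb{C}^2}\frac{z^{\alpha}\bar z^{\beta}}{(1+\|z\|^2)^{d+3}}\,dVol .
\]

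\textbf{Orthogonality.} Pass to polar coordinates $z_k=r_ke^{i\theta_k}$. The angular integral $\int e^{i(\alpha-\beta)\cdot\theta}\,d\theta$ vanishes unless $\alpha=\beta$. Since both monomials have total degree $d$, the equality $\alpha=\beta$ forces $\mathbf{i}=\mathbf{j}$. This proves the second statement.

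\textbf{Norms.} With $\alpha=\beta$, we get $dVol=r_1r_2\,dr_1dr_2\,d\theta_1d\theta_2$. The angles contribute $4\pi^2$. Substituting $t_k=r_k^2$ turns $r_k\,dr_k$ into $\tfrac12\,dt_k$, which leaves
\[
\|X^{\mathbf{i}}\|_2^2=\frac{2}{\pi^2}\cdot\pi^2\int_0^\infty\!\!\int_0^\infty \frac{t_1^{i_1}t_2^{i_2}}{(1+t_1+t_2)^{d+3}}\,dt_1dt_2 .
\]
This is a Dirichlet/Beta-type integral. Integrating in $t_1$ first, we use
\[
\int_0^\infty t^{a}(c+t)^{-b}\,dt=c^{a+1-b}B(a+1,b-a-1)
\]
with $c=1+t_2$, and then apply the same formula in $t_2$. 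The result is
\[
\frac{\Gamma(i_1+1)\Gamma(i_2+1)\Gamma(i_0+1)}{\Gamma(d+3)}=\frac{i_0!\,i_1!\,i_2!}{(d+2)!},
\]
using $i_0=d-i_1-i_2$. Multiplying by $2$ gives exactly ${d+2\choose \mathbf{i}}^{-1}$ as defined in the statement.

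\textbf{Main obstacle.} The main risk is bookkeeping of the normalizing constants: the factor $1/Vol_{FS}=2/\pi^2$, the Jacobian $\tfrac14$ from $t_k=r_k^2$, and the $(2\pi)^2$ from the angles. Their interplay must reproduce the $2!$ in the multinomial. As a sanity check, the case $d=0$ must give $\|1\|_2^2=1$, consistent with ${2\choose (0,0,0)}=1$.
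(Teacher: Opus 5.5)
Your proof is correct, and it takes a different route from the paper, which gives no argument at all and simply cites Kostlan.

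\textbf{Checking your computation.} The constants combine as $\tfrac{2}{\pi^2}\cdot 4\pi^2\cdot\tfrac14=2$. In the iterated Beta integral, $\Gamma(d+2-i_1)=\Gamma(i_0+i_2+2)$ cancels, leaving $i_0!\,i_1!\,i_2!/(d+2)!$. Your $d=0$ check is consistent. One line worth adding in the orthogonality step: $|z^{\alpha}\bar z^{\beta}|\le(1+\|z\|^2)^d$, so the integrand is dominated by $(1+\|z\|^2)^{-3}$, which is integrable. That is what justifies integrating out the angles first. Your opening appeal to unitary invariance plays no role; what you actually use is that $\mathbb{C}P^2\setminus U_0$ has measure zero.

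\textbf{Comparison with the standard argument.} The usual proof of this fact works in $\mathbb{C}^3$ rather than in an affine chart. Under the Hopf map $S^5\to\mathbb{C}P^2$, $\|P\|_2^2$ becomes the average of $|P|^2$ over $S^5$. Compare polar coordinates on $\mathbb{R}^6$ with the factorized Gaussian integral $\int_{\mathbb{C}^3}|z^{\mathbf{i}}|^2e^{-\|z\|^2}\,dV=\pi^3\,i_0!\,i_1!\,i_2!$. The radial factor $\Gamma(d+3)/2$ and the sphere area $|S^5|=\pi^3$ then give the average $2\,i_0!\,i_1!\,i_2!/(d+2)!$ directly. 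Orthogonality follows from the same torus-angle argument you use, applied coordinate by coordinate.

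\textbf{What each route buys.} The Gaussian route treats $X_0,X_1,X_2$ symmetrically and avoids the iterated Beta integral. It also explains the $(d+2)!$ and the $2!$ conceptually, and it extends verbatim to $\mathbb{C}P^n$ with $n!$ in place of $2!$. Your route needs nothing about the Hopf fibration. It uses only formula \eqref{pointwise} and Lemma \ref{FSdistance}(a), which the paper has already recorded, and it makes each normalizing constant explicit.
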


\subsection{Chebyshev polynomials of the first kind} \label{subsectionChebyshev}
Let $n\in \mathbb{N}$. The $n$th Chebyshev polynomial of the first kind is defined via the trigonometric equation 
\[ T_{n}(\cos(\theta))= \cos(n\theta). \] 
\noindent This relation defines $T_{n}(x)$ uniquely as a polynomial of degree $n$. Chebyshev polynomials of the first kind will be used as a starting point to construct curves with nesting which grows to infinity with $d$ in subsection \ref{Chebyshevimprove}. The following two classical propositions about Chebyshev polynomials of the first kind will be stated below without proof. For their proofs we refer the reader to \cite{MH}. 

\begin{proposition} \label{recursion} 
$T_{0}(x)=1$, $T_{1}(x)=x$ and the polynomials $T_{n}(x)$ satisfy the following recursion relation for $n\geq 1$. 
\begin{equation}\label{recursionequation}
T_{n+1}(x)=2x T_{n}(x)-T_{n-1}(x). 
\end{equation}
\hfill $\Box$ 
\end{proposition}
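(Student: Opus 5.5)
The plan is to derive everything from the defining identity $T_n(\cos\theta)=\cos(n\theta)$ together with the sum-to-product formula for cosines. First I check the initial values. Taking $n=0$ gives $T_0(\cos\theta)=\cos 0=1$ for every $\theta$, so $T_0(x)=1$. Taking $n=1$ gives $T_1(\cos\theta)=\cos\theta$, so $T_1(x)=x$. In both cases the identity holds for all $x\in[-1,1]$, and a polynomial is determined by its values on an infinite set, so these are the unique polynomials with the required property.

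For the recursion, fix $n\geq 1$ and use the identity
\[ \cos((n+1)\theta)+\cos((n-1)\theta)=2\cos\theta\cos(n\theta), \]
which follows by adding the addition formulas $\cos(n\theta\pm\theta)=\cos(n\theta)\cos\theta\mp\sin(n\theta)\sin\theta$. Substituting the defining relation into each term gives
\[ T_{n+1}(\cos\theta)=2\cos\theta\, T_n(\cos\theta)-T_{n-1}(\cos\theta) \]
for all $\theta$. Hence the two polynomials $T_{n+1}(x)$ and $2xT_n(x)-T_{n-1}(x)$ agree on all of $[-1,1]$. Their difference is a polynomial with infinitely many zeros, so it vanishes identically. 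This proves \eqref{recursionequation}.

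The one genuinely delicate point is logical rather than computational: the statement presupposes that $T_n$ exists as a polynomial of degree $n$. I would handle this with the same recursion, by strong induction. Suppose $T_{n-1}$ and $T_n$ exist, with degrees $n-1$ and $n$, and $T_n$ has leading coefficient $2^{n-1}$ for $n\ge1$. Define $T_{n+1}$ to be $2xT_n-T_{n-1}$. The cosine identity above shows that $T_{n+1}(\cos\theta)=\cos((n+1)\theta)$. The leading term of $T_{n+1}$ is $2\cdot 2^{n-1}x^{n+1}=2^{n}x^{n+1}$, so its degree is exactly $n+1$. Uniqueness again follows from agreement on $[-1,1]$. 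Existence and the recursion are therefore proved together.
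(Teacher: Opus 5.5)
Your proof is correct. The paper gives no proof of its own here and simply defers to \cite{MH}, where the argument is essentially the one you give: the identity $\cos((n+1)\theta)+\cos((n-1)\theta)=2\cos\theta\cos(n\theta)$, together with the fact that two polynomials agreeing on $[-1,1]$ coincide. Your induction on existence and exact degree (via the leading coefficient $2^{n-1}$) also justifies the paper's unproved assertion that the trigonometric relation defines $T_n$ uniquely as a polynomial of degree $n$.
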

\begin{proposition} \label{roots}
All roots of $T_{n}(x)$ are real and  they are contained in the interval $[-1,1]$. More precisely, these roots are 
\[ x_{k}=\cos\left(\frac{(2k+1)\pi}{2n}\right), \quad k\in \{0,1,\ldots,n-1\}.  \] 
The extremal values of $T_{n}(x)$ in the interval $[-1,1]$ are all $\pm 1$ and they are located at the points
\[ y_{k}=\cos\left(\frac{k\pi}{n}\right), \quad k\in \{0,1,\ldots, n\}. \]
\hfill $\Box$ 
\end{proposition}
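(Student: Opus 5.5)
The statement just above is Proposition \ref{roots}, a classical fact about $T_n$. I would derive everything from the defining identity $T_n(\cos\theta)=\cos(n\theta)$, together with the fact that $T_n$ has degree exactly $n$. The degree claim follows from the recursion of Proposition \ref{recursion}, which gives by induction that $T_n$ has degree $n$ with leading coefficient $2^{n-1}$ for $n\ge1$. The plan has three steps: exhibit $n$ distinct roots in $[-1,1]$, conclude by degree counting that there are no others, and then treat the extremal values.

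\textbf{Roots.} For $k\in\{0,\ldots,n-1\}$ put $\theta_k=\frac{(2k+1)\pi}{2n}$. Then $\theta_k\in(0,\pi)$ and $n\theta_k=\frac{(2k+1)\pi}{2}$, so $\cos(n\theta_k)=0$ and $T_n(x_k)=0$ with $x_k=\cos\theta_k$. Since $\cos$ is strictly decreasing on $[0,\pi]$ and the $\theta_k$ are distinct points of $(0,\pi)$, the $x_k$ are $n$ distinct points of $(-1,1)$. A nonzero polynomial of degree $n$ has at most $n$ roots in $\mathbb{C}$. Hence these are all the roots of $T_n$; they are real, lie in $[-1,1]$, and are simple.

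\textbf{Extremal values.} Every $x\in[-1,1]$ can be written as $x=\cos\theta$ with $\theta\in[0,\pi]$, so $|T_n(x)|=|\cos(n\theta)|\le1$ on $[-1,1]$. The value $\pm1$ is attained exactly when $n\theta\in\pi\mathbb{Z}$, that is, $\theta=k\pi/n$ with $k\in\{0,\ldots,n\}$. This gives the points $y_k=\cos(k\pi/n)$, where $T_n(y_k)=(-1)^k$.

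The only point needing care is interpreting ``extremal values'' as the local extrema of $T_n$ on $[-1,1]$, and checking there are no others.
\begin{itemize}
\item Interior extrema. Differentiating the identity in $\theta$ gives $T_n'(\cos\theta)\sin\theta=n\sin(n\theta)$. For $\theta\in(0,\pi)$ we have $\sin\theta\neq0$, so $T_n'(\cos\theta)=0$ exactly when $\sin(n\theta)=0$, i.e.\ at $\theta=k\pi/n$ with $1\le k\le n-1$. These are $n-1$ distinct points, and $T_n'$ has degree $n-1$, so they are all the critical points of $T_n$.
\item Endpoints. The endpoints $y_0=1$ and $y_n=-1$ are extrema on the closed interval because $|T_n|\le1$ there and $T_n(\pm1)=(\pm1)^n$ has absolute value $1$.
\end{itemize}
Thus every extremum of $T_n$ on $[-1,1]$ occurs at some $y_k$ and has value $\pm1$. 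The main obstacle is therefore only this bookkeeping of critical points; the rest is immediate from the trigonometric definition.
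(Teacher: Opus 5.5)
Your proof is correct and complete. It reads the roots and the points $y_k$ off the identity $T_n(\cos\theta)=\cos(n\theta)$, uses the degree $n$ obtained from the recursion to rule out any further roots, and uses $T_n'(\cos\theta)\sin\theta=n\sin(n\theta)$ to locate all interior extrema. The paper gives no proof of its own and simply defers to the standard reference on Chebyshev polynomials, where this is the textbook argument. The one tacit assumption you should state is $n\ge 1$, which is needed for $y_k=\cos(k\pi/n)$ to make sense.
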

Let us write the $n$th degree Chebyshev polynomial $T_{n}(x)$ explicitly as 
\[ T_{n}(x)=\sum_{j=0}^{n} a_{j,n} x^j. \]
The following estimate on the coefficients $a_{j,n}$ will be useful in subsection \ref{Chebyshevimprove}. Let us assume that $a_{j,n}=0$ for $n<0$ or for $j>n$ for convenience. 

\begin{lemma} \label{coefficient}
For all $n\in \mathbb{N}$ and $0\leq j\leq n$, 
\[  |a_{j,n}| \leq 2^j  {n \choose j}.  \] 
\end{lemma}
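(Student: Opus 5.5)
We argue by strong induction on $n$, using the recursion of Proposition \ref{recursion}. Comparing coefficients of $x^j$ on both sides of \eqref{recursionequation} gives, for $n\geq 1$,
\[ a_{j,n+1} = 2a_{j-1,n} - a_{j,n-1}, \]
with the convention $a_{j,n}=0$ when $j<0$, $j>n$ or $n<0$. The base cases are immediate: $T_0=1$ gives $|a_{0,0}|=1\leq 2^0\binom{0}{0}$, and $T_1=x$ gives $a_{0,1}=0$ and $|a_{1,1}|=1\leq 2\binom{1}{1}$.

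For the inductive step, assume the bound holds for indices $n$ and $n-1$. The triangle inequality gives
\[ |a_{j,n+1}| \leq 2\cdot 2^{j-1}\binom{n}{j-1} + 2^{j}\binom{n-1}{j} = 2^j\left(\binom{n}{j-1}+\binom{n-1}{j}\right). \]
Since $\binom{n-1}{j}\leq \binom{n}{j}$, Pascal's rule then yields
\[ |a_{j,n+1}| \leq 2^j\left(\binom{n}{j-1}+\binom{n}{j}\right)=2^j\binom{n+1}{j}, \]
which is the claim for $n+1$. The edge cases $j=0$ and $j=n+1$ follow from the same computation under the zero conventions, together with $\binom{n}{-1}=0$ and $\binom{n-1}{n+1}=0$. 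For $j=n+1$ this gives $|a_{n+1,n+1}|\leq 2^{n+1}$; in fact the leading coefficient equals $2^{n}$.

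No real obstacle is expected. The only points to check are that the index conventions make the recursion valid for all $j$, including the boundary terms, and that we use the bound $\binom{n-1}{j}\leq\binom{n}{j}$, which closes the induction without any sign cancellation. Alternatively, one could use the explicit formula $T_n(x)=\frac{n}{2}\sum_k \frac{(-1)^k}{n-k}\binom{n-k}{k}(2x)^{n-2k}$, but the induction is shorter and relies only on Proposition \ref{recursion}.
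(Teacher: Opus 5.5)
Your proof is correct and is essentially the paper's argument: induction on $n$ via the coefficient recursion from Proposition \ref{recursion}, the triangle inequality, the bound $\binom{n-1}{j}\leq\binom{n}{j}$, and Pascal's rule, with the index shifted by one. Your separate base case $n=1$ is a small improvement. The recursion with the zero convention fails there (it would give $a_{1,1}=2$ rather than $1$), and the paper's induction, which checks only $n=0$, passes over this point.
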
 

\noindent \textit{Proof:} We proceed by induction on $n$. The statement holds for $n=0$. The recursion relation in Proposition \ref{recursion} gives 
\[ a_{j,n}=2a_{j-1,n-1}- a_{j,n-2} \]
\noindent upon comparing the coefficients of $x^{j}$ on both sides of equation \eqref{recursionequation}. Assume that the statement holds upto the values of the second index including $n-1$. Then, 
\begin{eqnarray*} 
|a_{j,n}|&\leq& 2|a_{j-1,n-1}|+|a_{j,n-2}| \\
&\leq&2\cdot 2^{j-1} {n-1 \choose j-1} + 2^{j} {n-2 \choose j}  \\ 
&\leq&  2^{j} \left( {n-1 \choose j-1} +{n-1 \choose j}\right)  \\ 
&=& 2^{j} {n \choose j} 
\end{eqnarray*} 
\hfill $\Box$   
\section{Sup-norm bounds for random sections}
\subsection{Mean-value inequality and local sup-norm bounds} \label{meanvalue}
Let $g:\mathbb{N^{+}}\rightarrow \mathbb{R}_{\geq 0}$ be a monotone increasing function such that $1 \ll g(d)\ll d$ for $d$ large. Set $\displaystyle{R(d):=\sqrt{\frac{g(d)}{d}}}$ and $\displaystyle{\rho(d)=\arctan\left(R(d)\right)}$. By Lemma \ref{FSdistance}, part (b), for any point $z\in U_{0}\equiv \mathbb{C}^{2}\subset \mathbb{C}P^2$, the Euclidean ball $\displaystyle{B\left(z,R(d)\right)}$ coincides with the Fubini-Study ball $B_{FS}(z,\rho(d))$. Following \cite{KW}, we define the {\it ball-average norm} of a random polynomial in this Fubini-Study ball. 

\begin{definition} \label{ballaverage}
(Compare with  Definition 4.6 and Subsection 4.4 of \cite{KW}) The ball-average norm over the Fubini-Study ball $B_{FS}(z,\rho(d))$ is the random variable $Y_{d}$ taking $Q\in H^{0}(\mathbb{CP}^2,L^d)\mapsto \|Q\|_{B_{FS}(z,\rho_{d})}\in \mathbb{R}_{+}$
where the Fubini-Study ball-average norm $\|Q\|_{B_{FS}(z,\rho(d))}$ is defined by
\[  \|Q\|^2_{B_{FS}(z,\rho(d))}=\frac{1}{Vol(B_{FS}(z,\rho(d)))} \int_{B_{FS}(z,\rho(d))} \|Q(t)\|_{FS}^2 dVol_{FS}(t). \]     
\end{definition} 

Let $K$ be a compact subset of the Euclidean ball $\displaystyle{B\left(0,R(d)/2\right)}$ and let $P\in H^{0}(\mathbb{CP}^2,L^d)$ be a (reference) polynomial of degree $d$, which does not vanish at any point of $K$. By using the mean-value inequality for holomorphic functions and following the argument of \cite{KW}, subsection 4.3, we now estimate the sup-norm of any polynomial $Q\in H^{0}(\mathbb{CP}^2,L^d)$ on $K$ in terms of its Fubini-Study ball-average norm. For all $z\in K$ and all $Q\in H^{0}(\mathbb{CP}^2,L^d)$, by the mean value inequality,
\begin{eqnarray*}
\frac{ \|Q(z)\|_{FS}^2}{\|P(z)\|_{FS}^2} &=& \frac{ |Q(z)|^2}{|P(z)|^2} \\
&\leq& \frac{1}{|P(z)|^2 Vol\left(B\left(z,R(d)/2\right)\right)} \int_{B\left(z,R(d)/2\right)}  |Q(t)|^2dVol(t) \\ 
&\leq& \frac{1}{\|P(z)\|_{FS}^2 Vol\left(B\left(z,R(d)/2\right)\right)} \int_{B\left(z,R(d)/2\right)}  \|Q(t)\|_{FS}^2 \left( \frac{1+\|t\|^2}{1+\|z\|^2}\right)^{d}dVol(t) \\ 
\end{eqnarray*}
We next use the inequalities $\displaystyle{ \|z\|\leq R(d)/2}$ and $\displaystyle{\|t\|-\|z\|\leq R(d)/2}$ to deduce that 
\begin{eqnarray*} 
 \left( \frac{1+\|t\|^2}{1+\|z\|^2}\right)^{d} &=& \left( 1+ \frac{(\|t\|-\|z\|)(\|t\|-\|z\|+2\|z\|)}{1+\|z\|^2}\right)^{d} \\
&\leq& \left( 1+\frac{3(R(d))^2}{4}\right)^{d}  \\ 
&=& \left( 1+\frac{3g(d)}{4d}\right)^{d}  \\  
&\leq& \exp\left(\frac{3g(d)}{4}\right). 
\end{eqnarray*} 

\noindent Since $z\in K$, we have $\displaystyle{B\left(z,R(d)/2\right)\subset B(0,R(d))}$. Noting that $\displaystyle{Vol(B(0,R(d)))=2^4 Vol\left(B\left(z,R(d)/2\right)\right)}$ and by using Lemma \ref{FSdistance}, part (a), we obtain
\[ \frac{\|Q(z)\|^2_{FS}}{\|P(z)\|^2_{FS}} \leq \frac{2^4  \exp\left(3g(d)/4\right)}{\|P(z)\|^2_{FS} Vol(B(0,R(d)))} \int_{B(0,R(d))} \|Q(t)\|^2_{FS}(1+\|t\|^2)^3 dVol_{FS}(t). \] 

\noindent Finally, by using the estimate $\displaystyle{ Vol(B(0,R(d))= \int_{B(0,R(d))} (1+\|t\|^2)^3 dVol_{FS}(t) \geq Vol(B_{FS}(0, \rho(d)))}$, we arrive at the following local sup-norm bound. 

\begin{proposition} \label{localsupbound}
\begin{equation}
 \sup_{z\in K} \left( \frac{\|Q(z)\|^2_{FS} }{\|P(z)\|^2_{FS}} \right) \leq  \frac{2^4  \exp\left(3g(d)/4\right) \left(1+g(d)/d\right)^3 }{\inf_{z\in K}  \|P(z)\|^2_{FS} }  \|Q\|^2_{B_{FS}(0,\rho(d))}.  
\end{equation} 
\hfill $\Box$
\end{proposition}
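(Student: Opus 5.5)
The plan is to finish the chain of inequalities already set up in the text, doing the last two steps carefully. Fix $z\in K$. The starting inequality for $\|Q(z)\|^2_{FS}/\|P(z)\|^2_{FS}$ comes from three ingredients: the mean-value inequality for the plurisubharmonic function $|Q|^2$ on the Euclidean ball $B(z,R(d)/2)$, the pointwise formula \eqref{pointwise}, and the bound $\left(\frac{1+\|t\|^2}{1+\|z\|^2}\right)^d\le \exp(3g(d)/4)$ proved above. Since $\|z\|\le R(d)/2$, the ball $B(z,R(d)/2)$ lies inside $B(0,R(d))$. Because the integrand is nonnegative, I will enlarge the domain of integration to $B(0,R(d))$. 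I will then replace $Vol(B(z,R(d)/2))$ by $2^{-4}Vol(B(0,R(d)))$, using real dimension $4$. Finally, Lemma \ref{FSdistance}(a) lets me write $dVol=(1+\|t\|^2)^3dVol_{FS}$. This gives the displayed intermediate bound.

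Next I will bound the weight $(1+\|t\|^2)^3$ for $t\in B(0,R(d))$ by $(1+R(d)^2)^3=(1+g(d)/d)^3$ and pull it out of the integral. What remains is
\[\frac{1}{Vol(B(0,R(d)))}\int_{B(0,R(d))}\|Q(t)\|_{FS}^2\,dVol_{FS}(t).\]
To convert this into the ball-average norm, I need the denominator to dominate $Vol_{FS}(B_{FS}(0,\rho(d)))$. By Lemma \ref{FSdistance}(b), $B(0,R(d))=B_{FS}(0,\rho(d))$ as sets. By Lemma \ref{FSdistance}(a), $dVol_{FS}\le dVol$ pointwise, so $Vol(B(0,R(d)))\ge Vol_{FS}(B_{FS}(0,\rho(d)))$. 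Hence the quotient is at most $\|Q\|^2_{B_{FS}(0,\rho(d))}$, by Definition \ref{ballaverage}.

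Combining these gives, for each $z\in K$,
\[\frac{\|Q(z)\|^2_{FS}}{\|P(z)\|^2_{FS}}\le \frac{2^4e^{3g(d)/4}(1+g(d)/d)^3}{\|P(z)\|^2_{FS}}\|Q\|^2_{B_{FS}(0,\rho(d))}.\]
I then replace $\|P(z)\|^2_{FS}$ by $\inf_K\|P\|^2_{FS}$ and take the supremum over $z\in K$. The infimum is positive because $K$ is compact and $P$ does not vanish on $K$.

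The only delicate point is the direction of the inequality in the volume comparison. In particular, I have to check that the extra factor $(1+\|t\|^2)^3$ goes into the numerator, which is where the factor $(1+g(d)/d)^3$ comes from. Everything else is bookkeeping.
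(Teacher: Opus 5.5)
Your proposal is correct and follows the same route as the paper. Both arguments use the sub-mean-value inequality for $|Q|^2$ on $B(z,R(d)/2)$, the weight bound $\exp(3g(d)/4)$, and enlargement to $B(0,R(d))$ with the factor $2^4$. Both then convert volumes via Lemma \ref{FSdistance}(a), bound $(1+\|t\|^2)^3\le(1+g(d)/d)^3$, and use $Vol(B(0,R(d)))\ge Vol_{FS}(B_{FS}(0,\rho(d)))$. Your explicit check that $\inf_K\|P\|^2_{FS}>0$, and your use of the Euclidean/Fubini--Study ball identification only at the center $0$ (where Lemma \ref{FSdistance}(b) actually gives it), make the bookkeeping slightly cleaner than the paper's text.
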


\subsection{Global sup-norm bounds of random sections in large subspaces}
Let $M$ be any compact complex algebraic manifold of dimension $m$,  $(L,h)$ a positive hermitian line bundle on $M$, and $H_{d}:=H^{0}(M,L^{d})$. The asymptotic form of Riemann-Roch theorem implies that $N_{d}= \dim_{\mathbb{C}}H_{d}\asymp d^{m}$, see for instance Theorem 1.1.24 in \cite{Laz}. The pointwise hermitian structure $h$ naturally induces an $L^2$-hermitian inner product $\langle \cdot,\cdot \rangle$ and norm $\|\cdot\|$ on $H_{d}$. Let
$W_d \subset H_d$ be a complex linear subspace of dimension $k_d := \dim_{\mathbb{C}} W_d.$
We write
\[
SW_d:=\{s\in W_d | \|s\|=1\}
\]
for the unit sphere in $W_d$, equipped with its Haar probability measure $\mu_{W_d}$
(equivalently: $\displaystyle{s=g/\|g\|}$ with $g$ a standard complex Gaussian in $W_d$). Fix an orthonormal basis $\{S_1^d,\dots,S_{k_d}^d\}$ of $W_d$ and define the associated
(subspace) Kodaira map
\[
\Phi_{W_d}:M\to\mathbb{C}^{k_d},
\qquad
\Phi_{W_d}(x):=\big(S_1^d(x),\dots,S_{k_d}^d(x)\big),
\]
with reproducing kernel
\[ \Pi_{W_d}(x,y)
:=\langle \Phi_{W_d}(x), \Phi_{W_d}(y)\rangle 
=\sum_{j=1}^{k_d} S_j^d(x)\overline{S_j^d(y)} .
\]

\subsubsection{Spherical tail bounds in subspaces}

The following lemma is a direct analogue of Lemma~2.1 in
\cite{SZ}, but formulated for the unit sphere in a general
subspace $W_d$, Its proof follows precisely the one of the cited lemma, hence will not be reproduced here. 

\begin{lemma}[Spherical tail bound in $W_d$]\label{lem:subspace-spherical}
Let $A\in W_d$ be a unit vector and let $\mu_{W_d}$ be the Haar-uniform probability measure on $SW_d$.
Then for $0<\lambda<1$,
\[
\mu_{W_d}\big\{ P\in SW_d: |\langle P, A\rangle |>\lambda \big\}
=(1-\lambda^2)^{k_d-1}
\le e^{-(k_d-1)\lambda^2}.
\]
\hfill $\Box$
\end{lemma}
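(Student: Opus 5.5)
The plan is to reduce to a statement about the first coordinate of a uniformly distributed unit vector in $\mathbb{C}^{k_d}$ and then compute its distribution explicitly. By unitary invariance of the Haar measure $\mu_{W_d}$, we may complete $A$ to an orthonormal basis of $W_d$ with $A$ as the first vector. After identifying $W_d\cong\mathbb{C}^{k_d}$ isometrically through this basis, we get $\langle P,A\rangle = P_1$, the first coordinate of $P$. We realize $P=g/\|g\|$ with $g=(g_1,\dots,g_{k_d})$ having i.i.d.\ standard complex Gaussian entries. Then
\[ |\langle P,A\rangle|^2=\frac{|g_1|^2}{|g_1|^2+\sum_{j\ge 2}|g_j|^2}. \]

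The key step is to identify the law of this ratio. Each $|g_j|^2$ is an independent exponential random variable with mean one, that is, a $\Gamma(1,1)$ variable. Hence $X=|g_1|^2\sim\Gamma(1,1)$ and $Y=\sum_{j\ge2}|g_j|^2\sim\Gamma(k_d-1,1)$ are independent, and $X/(X+Y)$ follows a $\mathrm{Beta}(1,k_d-1)$ law with density $(k_d-1)(1-t)^{k_d-2}$ on $[0,1]$. Therefore
\[ \mu_{W_d}\{|\langle P,A\rangle|^2>\lambda^2\}=\int_{\lambda^2}^1 (k_d-1)(1-t)^{k_d-2}\,dt=(1-\lambda^2)^{k_d-1}. \]

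If one prefers to avoid Beta distributions, the same identity follows from Archimedes-type slicing, which is the route of \cite{SZ}. The pushforward of the uniform measure on $S^{2k_d-1}\subset\mathbb{C}^{k_d}$ under $P\mapsto P_1\in\mathbb{C}$ has density proportional to $(1-|w|^2)^{k_d-2}$ on the unit disc. Integrating this in polar coordinates gives the same formula.

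The inequality then follows from $1-x\le e^{-x}$ applied with $x=\lambda^2$, raised to the power $k_d-1\ge 0$. I do not expect any step to be a real obstacle. The only points needing care are the exponent $k_d-1$ (the complex dimension minus one) and the implicit assumption $k_d\ge 2$; in the degenerate case $k_d=1$ the statement reads $\mu=1$, which is consistent because $|\langle P,A\rangle|=1>\lambda$.
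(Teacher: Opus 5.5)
Your argument is correct and is essentially the proof the paper intends, since it defers to Lemma~2.1 of Shiffman--Zelditch: reduce by unitary invariance to $A=e_1$, compute the law of $|P_1|^2$ exactly, and conclude with $1-x\le e^{-x}$. Your $\mathrm{Beta}(1,k_d-1)$ computation and the slicing density $\frac{k_d-1}{\pi}(1-|w|^2)^{k_d-2}$ are two packagings of the same calculation, and your treatment of the exponent $k_d-1$ and of the degenerate case $k_d=1$ is right.
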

\begin{remark}[Real version and a non-sharp complex bound]
Viewing $W_d\simeq\mathbb{C}^{k_d}$ as a real Euclidean space
$\mathbb{R}^{2k_d}$, L\'evy concentration applied to the
$1$--Lipschitz function $P\mapsto \mathrm{Re}\,\langle P, A\rangle$ yields
\[
\mu\big\{|\mathrm{Re}\,\langle P, A\rangle |>\lambda\big\}
< \exp\!\Big(-\frac{(k_d-1)\lambda^2}{2}\Big).
\]
As in \cite{SZ}, this implies the weaker (non-sharp) estimate
\[
\mu \big\{|\langle P, A\rangle |>\lambda\big\}
\le
2\exp\!\Big(-\frac{(k_d-1)\lambda^2}{4}\Big)\cdot
\]
\end{remark}

\subsubsection{Hyperplane subspaces} \label{hyperplane}
Let us fix a (deterministic) unit vector $v_d\in SH_d$ (e.g.\ a normalized H\"ormander peak section)
and consider the hyperplane
\[
W_d:=v_d^\perp\subset H_d,\qquad k_d:=\dim_{\mathbb{C}}W_d=N_{d}-1\asymp d^m.
\]
The pointwise geometry of $\Pi_{W_d}(x,x)=\Pi_d(x,x)-|v_d(x)|^2$ may degenerate near the peak of $v_d$.
Nevertheless, we claim that the \emph{upper tail} of the sup-norm for a random section in $SW_d$ admits the same
$\sqrt{\log d}$ bound as in \cite{SZ}.  The next theorem is formulated so that it can be inserted
directly into the proof scheme of Shiffman-Zelditch in \cite{SZ}.

\begin{theorem}\label{lem:hyperplane-tail}
Fix any $v_d\in SH_d$ and set $W_d:=v_d^\perp$.
Let $\mu_{W_d}$ be the Haar probability measure on $SW_d$. Then there exists $C>0$ such that
\[
\mu_{W_d}\Big\{s\in SW_d:\ \|s\|_{L^\infty(M)} > C\sqrt{\log d}\Big\}=O(d^{-1}).
\]
and more generally $O(d^{-j})$ for every $j\in\mathbb{Z}_+$ after increasing $C=C(j)$.
\end{theorem}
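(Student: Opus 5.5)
We follow the Shiffman--Zelditch scheme of \cite{SZ}: a pointwise spherical tail bound at each point of $M$, a union bound over a finite net of points at scale $d^{-1/2}$ (actually a bit finer), and a Bernstein/Cauchy-type gradient estimate to pass from the net to all of $M$. The only new input is to check that the pointwise tail bound in $W_d=v_d^\perp$ is no worse than in $H_d$, even though $\Pi_{W_d}(x,x)$ may degenerate near the peak of $v_d$.

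\textbf{Step 1 (pointwise bound).} For $x\in M$ and $s\in W_d$ we have $s(x)=\langle s,\Pi_{W_d}(\cdot,x)\rangle$, where $\Pi_{W_d}(\cdot,x)$ is the orthogonal projection onto $W_d$ of the coherent state $\Pi_d(\cdot,x)$. Its norm is $\Pi_{W_d}(x,x)^{1/2}\le\Pi_d(x,x)^{1/2}$. If $\Pi_{W_d}(x,x)=0$ then $s(x)=0$ for every $s$ and there is nothing to prove. Otherwise we put $A_x=\Pi_{W_d}(\cdot,x)/\|\Pi_{W_d}(\cdot,x)\|$, a unit vector in $W_d$. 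Lemma~\ref{lem:subspace-spherical} then gives
\[
\mu_{W_d}\{|s(x)|>\lambda\,\Pi_d(x,x)^{1/2}\}\le\mu_{W_d}\{|\langle s,A_x\rangle|>\lambda\}\le e^{-(k_d-1)\lambda^2}.
\]
The monotonicity $\Pi_{W_d}\le\Pi_d$ is exactly what removes the degeneracy problem, since we only need an upper tail. By the Tian--Yau--Zelditch expansion, $\Pi_d(x,x)=\frac{N_d}{\mathrm{Vol}(M)}(1+O(d^{-1}))$ uniformly in $x$. Choosing $\lambda=C'\sqrt{\log d/k_d}$ and using $k_d\asymp N_d$, we get $\mu_{W_d}\{|s(x)|>C''\sqrt{\log d}\}\le d^{-C'^2/2}$ for large $d$.

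\textbf{Step 2 (net and union bound).} We take a finite set $\{x_i\}\subset M$ of cardinality $O(d^{a})$ for a fixed exponent $a$ (to be fixed in Step 3) such that every point of $M$ lies within distance $\delta_d=d^{-1}$ of some $x_i$; since $M$ has real dimension $2m$ such a net exists with $a=m$. The union bound gives
\[
\mu_{W_d}\Big\{\max_i|s(x_i)|>C''\sqrt{\log d}\Big\}\le O(d^{a-C'^2/2}),
\]
which is $O(d^{-j})$ once $C'$ is large in terms of $j$.

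\textbf{Step 3 (off the net).} Deterministically, for every unit $s\in H_d$ (and hence for every $s\in SW_d$) the $C^1$ norm in normal coordinates satisfies
\[
\|\nabla s\|_{L^\infty}\le C\sqrt d\,\|s\|_{L^\infty}.
\]
We intend to obtain this from the Bernstein-type estimate used in \cite{SZ}, or more crudely from the Cauchy estimate applied on a ball of radius $d^{-1/2}$ in a local holomorphic frame; the crude estimate is legitimate here because it holds on all of $H_d$ and does not use $W_d$. Let $x^*$ be a point where $|s|$ attains its maximum, and $x_i$ a net point with $d(x^*,x_i)\le d^{-1}$. Then
\[
|s(x^*)|\le|s(x_i)|+C d^{-1/2}|s(x^*)|,
\]
so $\|s\|_\infty\le(1-Cd^{-1/2})^{-1}\max_i|s(x_i)|$, which completes the proof.

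The main obstacle is making Step 3 rigorous with the hermitian metric, since $|s|_h$ is not holomorphic. We plan to handle this as in \cite{SZ}: work in a local frame $e_L$ with $|e_L|_h=e^{-d\varphi}$ and $\varphi(x)=|x|^2+O(|x|^3)$, and use the Cauchy estimate on a ball of radius $\sim d^{-1/2}$, where the weight distorts by only a bounded factor.
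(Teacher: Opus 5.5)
Your proof is correct. Steps 1--2 match the paper: you project the coherent state onto $W_d$, use $\Pi_{W_d}(x,x)\le\Pi_d(x,x)\lesssim d^m$, apply Lemma~\ref{lem:subspace-spherical}, and take a union bound over a polynomial-size net. The genuine difference is Step 3.

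\emph{The paper's Step 3.} It never compares $s$ with its own sup norm. It uses the Lipschitz bound $\|K_x-K_y\|\le C_1 d^{\frac{m+1}{2}}d_M(x,y)$ on coherent vectors, derived from the near-diagonal Bergman kernel expansion. Combined with $\|s\|_{L^2}=1$ and Cauchy--Schwarz, this gives an additive oscillation bound $\frac14 t_d$. That forces a very fine net, of mesh $\delta_d\asymp\sqrt{\log d}\,d^{-\frac{m+1}{2}}$ and cardinality $\lesssim d^{m(m+1)}$, and yields $\|s\|_\infty\le\frac54 t_d$.

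\emph{Your Step 3.} You use the deterministic Bernstein inequality $\mathrm{Lip}(|s|_h)\le C\sqrt d\,\|s\|_\infty$ and a self-improving argument at a maximum point. This buys three things: a much coarser net; no need for off-diagonal kernel asymptotics, only the diagonal upper bound; and no fiber-identification (phase) issue. That issue is implicit in the paper's formula $s(x)-s(y)=\langle s,K_x-K_y\rangle$, whereas you only compare the real function $|s|_h$ at two points.

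The price is that you must actually prove the Bernstein estimate, via an adapted frame and Cauchy estimates at scale $d^{-1/2}$. Your sketch of this is sound. Since $\nabla\varphi(0)=0$, the weight contributes nothing to the gradient at the center, and re-centering gives the bound everywhere.

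One small slip: a $d^{-1}$-net in real dimension $2m$ has cardinality of order $d^{2m}$, so $a=2m$, not $m$. This is harmless because $C'$ is chosen after $a$.
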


\begin{proof}
Recall that the ambient Bergman--Szeg\H{o} kernel $\Pi_d$ satisfies the standard uniform estimates (see \cite{Ca}, \cite{Ch}, \cite{SZ2}, \cite{Ze}) :
There exist constants $C_0,C_1>0$ (independent of $N$) such that
\begin{align}
\Pi_d(x,x) &\le C_0\,d^m \qquad \forall x\in X,\ \forall d, \label{eq:Pi_diag_upper}\\
\|K_x-K_y\| &\le C_1\,d^{\frac{m+1}{2}}\,d_M(x,y)
\qquad \text{whenever } d_M(x,y)\le d^{-1/2}, \label{eq:K_Lipschitz}
\end{align}
where $K_x\in H_d$ denotes the evaluation (coherent) vector: $s(x)=\langle s, K_x\rangle$ and $\|K_x\|^2=\Pi_d(x,x)$. Indeed, the second inequality follows from the identity
$$\|K_x-K_y\|^2=\Pi_d(x,x)+\Pi_d(y,y)-2\Re\Pi_d(x,y)$$
and the near-diagonal Bergman kernel expansion
$\Pi_d(x,y)=d^m e^{-cd d_{M}(x,y)^2}(1+O(d^{-1/2}))$,
which implies the estimate
$\Pi_d(x,x)-\Re\Pi_d(x,y)\lesssim d^{m+1}d_{M}(x,y)^2$
for $d_{M}(x,y)\le d^{-1/2}$.

Fix $x\in X$. Let $P_W:H_d\to W_d$ be the orthogonal projection and set
\(
K_x^W:=P_W K_x\in W_d.
\)
Since $s\in W_d$ implies $s(x)=\langle s, K_x\rangle=\langle s, K_x^W\rangle $, we have
\[
|s(x)| = \big|\langle s, K_x^W\rangle \big|.
\]
If $K_x^W\neq 0$ let $\displaystyle{A_x:=K_x^W/\|K_x^W\|\in SW_d}$; otherwise $s(x)=0$ for all $s\in W_d$ and the desired 
estimate is trivial. In the nontrivial case,
\(
|s(x)| = \|K_x^W\|\,|\langle s,A_x\rangle |.
\)

Because $\|K_x^W\|\le \|K_x\|=\sqrt{\Pi_d(x,x)}$, Lemma~\ref{lem:subspace-spherical} (applied on $SW_d$)
gives, for $t>0$,
\begin{equation}\label{eq:point_tail}
\mu_{W_d}\big\{|s(x)|>t\big\}
\le
\exp\!\left(-(k_d-1)\frac{t^2}{\Pi_d(x,x)}\right)
\le
\exp\!\left(-\frac{k_d-1}{C_0 d^m}\,t^2\right),
\end{equation}
where we used \eqref{eq:Pi_diag_upper} in the second inequality.
Since $k_d=N_{d}-1\sim c d^m$, there exist $d_0$ and $c_2>0$ such that for $d\ge d_0$,
\[
\frac{k_d-1}{C_0 d^m}\ge c_2,
\]
hence \eqref{eq:point_tail} yields the uniform subgaussian bound
\begin{equation}\label{eq:point_tail_simple}
\mu_{W_d}\big\{|s(x)|>t\big\}\le e^{-c_2 t^2}\qquad \forall x\in X,\ \forall t>0,\ \forall d\ge d_0.
\end{equation}

Fix $t_d:=A\sqrt{\log d}$ with $A>0$ to be chosen and set
\( \displaystyle{
\delta_d:=\frac{t_d}{4C_1 d^{\frac{m+1}{2}}}
=\frac{A\sqrt{\log d}}{4C_1\,d^{\frac{m+1}{2}}}. }
\)
Choose a $\delta_d$--net $\mathcal{N}_d\subset M$ with cardinality
\( \displaystyle{
|\mathcal{N}_d|\le C_3\,\delta_d^{-2m}
\lesssim d^{m(m+1)}(\log d)^{-m},}
\)
for a constant $C_3$ depending only on $(M,m)$. For any $x,y\in X$ with $d_M(x, y)\le \delta_d\le d^{-1/2}$, we have by Cauchy--Schwarz inequality and \eqref{eq:K_Lipschitz}
\[
|s(x)-s(y)|=\big|\langle s, K_x-K_y\rangle \big|\le \|s\|\,\|K_x-K_y\|\le C_1\,d^{\frac{m+1}{2}}\,d_M(x,y).
\]
Thus for $d_M( x, y)\le\delta_d$,
\begin{equation}\label{eq:osc}
|s(x)-s(y)|\le C_1 d^{\frac{m+1}{2}}\delta_d = \frac14\,t_d.
\end{equation}

\smallskip
\noindent
If $\max_{p\in\mathcal{N}_d}|s(p)|\le t_d$, then for any $x\in M$
choose $p\in\mathcal{N}_d$ with $d(x,p)\le\delta_d$ and obtain
\[
|s(x)|\le |s(p)|+|s(x)-s(p)|
\le t_d+\frac14 t_d=\frac54\,t_d.
\]
Hence,
\begin{equation}\label{eq:reduce}
\mu_{W_d}\left\{\|s\|_\infty>\frac{5}{4} t_d\right\}\le \mu_{W_d}\Big\{\max_{p\in\mathcal{N}_d}|s(p)|>t_d\Big\}.
\end{equation}
Now, by \eqref{eq:point_tail_simple} and a union bound over $\mathcal{N}_d$,
\[
\mu_{W_d}\Big\{\max_{p\in\mathcal{N}_d}|s(p)|>t_d\Big\}
\le |\mathcal{N}_d|\,e^{-c_2 t_d^2}
\le C_4\,d^{m(m+1)}(\log d)^{-m}\,d^{-c_2A^2},
\]
for a constant $C_4$ where we choose $A$ so that $c_2A^2\ge m(m+1)+2$. Then the right-hand side is $O(d^{-1})$. 
Combining with \eqref{eq:reduce} and absorbing fixed constants into $C$ yields
\[
\mu_{W_d}\{\|s\|_\infty>C\sqrt{\log d}\}=O(d^{-1}).
\]

\end{proof}


Let now $M$ be $\mathbb{C}P^2$ with its Fubini--Study K\"ahler form, and let
$L=\mathcal{O}_{\mathbb{C}P^2}(1)$ be the hyperplane bundle with its standard hermitian metric, as described in subsection \ref{FS}.
Then, for each $d\ge 1$, we have
$H_d:=H^0(\mathbb{C}P^2,\mathcal{O}_{\mathbb{CP}^2}(d))$,
equipped with the induced pointwise hermitian product $h_{d}$ and the induced $L^2$-inner product \eqref{hermprod}.

\begin{corollary}
\label{cor:CP2_real_hyperplane}
Fix any unit vector $P\in SH_d$ and define the hyperplane
$W_d:=\langle P\rangle^{\perp} \subset H_d$.  Let $\mu_{W_d}$ be the Haar probability measure on the unit sphere $SW_d$.
Then there exists a constant $C>0$ such that
\[
\mu_{W_d}\Big\{\,Q\in SW_d:\ \|Q\|_{L^\infty(\mathbb{C}P^2)}
> C\sqrt{\log d}\,\Big\}
=O(d^{-1}),
\]
and more generally $O(d^{-j})$ for every $j\in\mathbb{Z}_+$ after increasing $C=C(j)$.
\end{corollary}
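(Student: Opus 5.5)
The corollary should be a direct specialization of Theorem \ref{lem:hyperplane-tail} to $M=\mathbb{C}P^2$, $m=2$, $L=\mathcal{O}_{\mathbb{C}P^2}(1)$ with the Fubini-Study metric, and $v_d=P$. The plan is therefore to check that every hypothesis used in that proof holds in this concrete setting, with constants independent of $d$. Two points need care. First, the Haar measure on $SW_d$ and the $L^2$-norm must be the ones induced by the inner product \eqref{hermprod}. Second, the pointwise norm in $\|Q\|_{L^\infty}$ must be the Fubini-Study norm \eqref{pointwise}. The normalization by $1/Vol_{FS}(\mathbb{C}P^2)$ in \eqref{hermprod} only rescales the inner product by a fixed factor, so it is harmless.

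\textbf{Step 1: dimension.} We have $k_d=N_d-1=\binom{d+2}{2}-1\sim d^2/2$. Hence $(k_d-1)/(C_0d^2)$ is bounded below by some $c_2>0$ for large $d$.

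\textbf{Step 2: on-diagonal bound \eqref{eq:Pi_diag_upper}.} This is explicit here. Using the orthonormal basis $\binom{d+2}{\mathbf i}^{1/2}X^{\mathbf i}$ from Lemma \ref{L2norms}, together with the multinomial theorem and \eqref{pointwise}, the Bergman kernel on the diagonal is constant:
\[
\Pi_d(x,x)=\frac{(d+1)(d+2)}{2}.
\]
This is consistent with unitary invariance. Therefore $C_0$ can be taken absolute.

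\textbf{Step 3: Lipschitz estimate \eqref{eq:K_Lipschitz}.} This can also be verified directly. The off-diagonal kernel normalized by the metric satisfies
\[
|\Pi_d(x,y)|=\frac{(d+1)(d+2)}{2}\,\cos^d d_{FS}(x,y).
\]
Hence $\|K_x-K_y\|^2\le 2\Pi_d(x,x)\bigl(1-\cos^d d_{FS}(x,y)\bigr)\lesssim d^3 d_{FS}(x,y)^2$ for $d_{FS}\le d^{-1/2}$.

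One subtlety needs handling. The identity $\|K_x-K_y\|^2=\Pi(x,x)+\Pi(y,y)-2\Re\Pi(x,y)$ requires a choice of local frame, since $K_x$ lives in a fiber-dependent way. I would fix a local unitary frame near $x$ (for instance via the $U(3)$-action moving $x$ to $[1:0:0]$ and the affine chart $U_0$). In that frame, the phase of $\Pi_d(x,y)$ is controlled for nearby $y$. This frame-dependence is the only step I expect to need attention: one must make sure the net argument compares $|s(x)|$ and $|s(y)|$ in a consistent trivialization. Since $\bigl||s(x)|_h-|s(y)|_h\bigr|$ is frame-independent and bounded by the frame-dependent difference, the estimate \eqref{eq:osc} still controls pointwise norms.

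\textbf{Step 4: net cardinality.} A $\delta_d$-net in $\mathbb{C}P^2$, a compact real $4$-manifold, has cardinality $O(\delta_d^{-4})$, which matches $\delta_d^{-2m}$.

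With these inputs, the union bound in the proof of Theorem \ref{lem:hyperplane-tail} gives probability $\lesssim d^{6}(\log d)^{-2}d^{-c_2A^2}$. Choosing $c_2A^2\ge 6+j$ yields $O(d^{-j})$, with $C=\tfrac54 A$. This proves the corollary, including the general $j$ version.
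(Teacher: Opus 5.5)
Your proposal is correct and follows the paper, which obtains the corollary simply by specializing Theorem \ref{lem:hyperplane-tail} to $M=\mathbb{C}P^2$, $L=\mathcal{O}_{\mathbb{C}P^2}(1)$, $m=2$, $v_d=P$. Your explicit kernel formulas $\Pi_d(x,x)=N_d$ and $|\Pi_d(x,y)|=N_d\cos^d d_{FS}(x,y)$, and your remark that $\bigl||s(x)|_h-|s(y)|_h\bigr|$ is frame-independent and bounded by $\|K_x-K_y\|$ in any trivialization, make concrete the inputs \eqref{eq:Pi_diag_upper} and \eqref{eq:K_Lipschitz}, which the paper only quotes from the literature.
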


The orthogonal direct sum decomposition of complex polynomials in \eqref{realim} as the sum of a polynomial with real
coefficients and one with purely imaginary coefficients induces an orthogonal direct sum decomposition 
$W_d = W_{d,\mathbb{R}} \oplus i\,W_{d,\mathbb{R}}$,
where
$
W_{d,\mathbb{R}}:=\{P\in W_d:\ P \text{ has real coefficients}\}.
$
Analogous to \eqref{realKostlan}, there is an induced real Gaussian probability measure on 
$W_{d,\mathbb{R}}$, denoted again by $\mu_{\mathbb{R}}$, given by 
\begin{equation}
d\mu_{\mathbb{R}}(P)=\frac{e^{-\|P\|^2_{2}}}{\sqrt{\pi}^{\dim_{\mathbb{C}}(W_{d})}}dP 
\end{equation} 

\noindent With the same proof as for Theorem \ref{lem:hyperplane-tail} and Corollary \ref{cor:CP2_real_hyperplane} adapted to the real case, we deduce that there exists a constant $C'>0$ such that
\begin{equation}
\mu_{\mathbb{R}}\Big\{\,P\in SW_{d,\mathbb{R}}\cap (\mathbb{R}P_0)^\perp:
\ \|P\|_{L^\infty(\mathbb{C}P^2)}
> C'\sqrt{\log d}\,\Big\}
=O(d^{-1}),
\end{equation}
and more generally $O(d^{-j})$ after increasing $C'=C'(j)$.

\subsubsection{A formulation for general subspaces} We now formulate the results of subsection \ref{hyperplane} for a subspace of any codimension. The results of this subsection will not be used for geometric applications in this paper, however we include them for completeness.
For a general deterministic subspace $W_d\subset H_d$ we define
 \begin{equation}\label{alphad}
\alpha_d \;:=\; \sup_{x\in M}\frac{\Pi_{W_d}(x,x)}{k_d}
\end{equation}
that controls the pointwise tail in Lemma \ref{lem:subspace-spherical}. Note that by the averaging identity
\[
\int_M \Pi_{W_d}(x,x)\,dV(x)=k_d,
\]
we obtain the lower bound
$\displaystyle{
\alpha_d \;\ge\; \frac{1}{Vol(M)}\cdot
}$

\begin{theorem}\label{thm:alpha}
Let $W_d\subset H_d$ be a complex subspace of dimension $k_d\asymp d^m$, and let
$\mu_{W_d}$ be the Haar probability measure on $SW_d$.
Define $\alpha_{d}$ by \eqref{alphad}.
Then there exists $C>0$ such that
\[
\mu_{W_d}\Big\{s\in SW_d:\ \|s\|_{L^\infty(M)} > C\,\sqrt{\alpha_d\,\log d}\Big\}
=O(d^{-1}),
\]
and more generally $O(d^{-j})$ for every $j\in\mathbb{Z}_+$ after increasing $C=C(j)$.
\end{theorem}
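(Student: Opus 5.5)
The argument will follow the proof of Theorem \ref{lem:hyperplane-tail}: a pointwise tail bound, then a Lipschitz estimate, then a union bound over a net. The only change is that the crude bound $\|K_x^W\|^2\le \Pi_d(x,x)$ is replaced by the exact identity $\|K_x^W\|^2=\Pi_{W_d}(x,x)\le \alpha_d k_d$.

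\textbf{Pointwise tail.} Fix $x\in M$ and let $K_x^W=P_WK_x$. For $s\in W_d$ we have $s(x)=\langle s,K_x^W\rangle$. Moreover $\|K_x^W\|^2=\sum_j|S_j^d(x)|^2=\Pi_{W_d}(x,x)$, computed in the orthonormal basis of $W_d$. Lemma \ref{lem:subspace-spherical} then gives
\[
\mu_{W_d}\{|s(x)|>t\}\le \exp\!\Big(-(k_d-1)\frac{t^2}{\Pi_{W_d}(x,x)}\Big)\le \exp\!\Big(-\frac{k_d-1}{k_d}\frac{t^2}{\alpha_d}\Big)\le e^{-t^2/(2\alpha_d)}
\]
for $d$ large, uniformly in $x$. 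We set $t_d:=A\sqrt{\alpha_d\log d}$, so each point has tail probability at most $d^{-A^2/2}$.

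\textbf{Oscillation and net.} For $s\in SW_d$ we have $|s(x)-s(y)|=|\langle s,K_x^W-K_y^W\rangle|\le\|K_x-K_y\|$, since $P_W$ is a contraction. By \eqref{eq:K_Lipschitz} this is at most $C_1d^{(m+1)/2}d_M(x,y)$ when $d_M(x,y)\le d^{-1/2}$. We take
\[
\delta_d:=\frac{t_d}{4C_1d^{(m+1)/2}}.
\]
Since $\alpha_d\ge 1/Vol(M)$, we get $\delta_d\gtrsim \sqrt{\log d}\,d^{-(m+1)/2}$, so a $\delta_d$-net has cardinality $\lesssim \delta_d^{-2m}\lesssim d^{m(m+1)}$, a polynomial bound independent of $\alpha_d$.

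We also need $\delta_d\le d^{-1/2}$ for the Lipschitz estimate to apply. This holds provided $\alpha_d\log d\lesssim d^m$. If instead $\alpha_d\log d\gtrsim d^m$, the claim is trivial. Indeed $\|s\|_\infty^2\le \sup_x\Pi_{W_d}(x,x)\cdot\|s\|^2$ by Cauchy--Schwarz, and $\Pi_{W_d}\le\Pi_d\le C_0d^m$, so for large $C$ the event in question is empty. Equivalently, one may cap $\delta_d$ at $\min(\cdot,d^{-1/2})$, which only shrinks the net scale and does not affect the polynomial count.

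\textbf{Conclusion.} As in \eqref{eq:reduce}, if every net point satisfies $|s(p)|\le t_d$, then $\|s\|_\infty\le\frac54 t_d$. A union bound gives
\[
\mu_{W_d}\{\|s\|_\infty>\tfrac54t_d\}\lesssim d^{m(m+1)}d^{-A^2/2}.
\]
Choosing $A^2/2\ge m(m+1)+j$ yields $O(d^{-j})$.

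The hypothesis $k_d\asymp d^m$ is used only to make $(k_d-1)/k_d\ge 1/2$ and to keep the constants uniform. The main point needing care is the case split in the net step, namely verifying the scale condition $\delta_d\le d^{-1/2}$ for \eqref{eq:K_Lipschitz} when $\alpha_d$ is large. The trivial sup bound handles that regime.
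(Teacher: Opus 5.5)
Your proposal is correct and follows the paper's proof sketch essentially step for step: the pointwise tail comes from Lemma \ref{lem:subspace-spherical} using $\|K_x^W\|^2=\Pi_{W_d}(x,x)\le k_d\alpha_d$, then the Lipschitz bound on coherent vectors at scale $\delta\asymp t/d^{(m+1)/2}$, then a union bound over a net of polynomial size, which you correctly justify via $\alpha_d\ge 1/Vol(M)$. Your extra case split ensuring $\delta_d\le d^{-1/2}$ is valid but never triggered, since $\Pi_{W_d}(x,x)\le\Pi_d(x,x)\le C_0d^m$ together with $k_d\asymp d^m$ already forces $\alpha_d=O(1)$.
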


\begin{proof}[Proof sketch]
Fix $x\in M$ and set $K_x^W:=P_{W_d}K_x\in W_d$, so that for $s\in W_d$,
\(
s(x)=\langle s,K_x\rangle=\langle s,K_x^W\rangle.
\)
If $K_x^W\neq 0$ write $K_x^W=\|K_x^W\|A_x$ with $A_x\in S(W_d)$.
Then
\(
|s(x)|=\|K_x^W\|\,|\langle s,A_x\rangle|.
\)
Applying Lemma~\ref{lem:subspace-spherical} on the sphere $SW_d$ gives
\[
\mu_{W_d}\{|s(x)|>t\}\le 
\exp\!\Big(-(k_d-1)\frac{t^2}{\|K_x^W\|^2}\Big).
\]
Since $\|K_x^W\|^2=\Pi_{W_d}(x,x)\le k_d\alpha_d$, we obtain the uniform bound
\[
\mu_{W_d}\{|s(x)|>t\}
\le
\exp\!\Big(-c\,\frac{t^2}{\alpha_d}\Big),
\]
with $c>0$ independent of $x$ and $d$.

\smallskip
To pass from a fixed point to the supremum, fix the threshold
\(\displaystyle{
t:= C\,\sqrt{\alpha_d\,\log d},}
\)
and choose a $\delta$--net $\mathcal{N}_\delta\subset M$ with
$|\mathcal{N}_\delta|\lesssim \delta^{-2m}$.
Using the ambient coherent vectors, we have
\(\displaystyle{
|s(x)-s(y)|
=\big|\langle s,K_x-K_y\rangle\big|
\le \|K_x-K_y\|
}\). The standard near-diagonal estimate
$
\|K_x-K_y\|\lesssim d^{\frac{m+1}{2}}\,d_{M}(x,y)$
holds for $d_{M}(x,y)\le d^{-1/2}$. 
We choose
\[
\delta \asymp \frac{t}{d^{\frac{m+1}{2}}}
\asymp \frac{\sqrt{\alpha_d\log d}}{d^{\frac{m+1}{2}}}\cdot
\]
With this choice, whenever $d(x,p)\le\delta$ we have
\(\displaystyle{
|s(x)-s(p)|
\le \|K_x-K_p\|
\lesssim d^{\frac{m+1}{2}}\delta
\le \frac14\,t,
}\)
so that if $\max_{p\in\mathcal{N}_\delta}|s(p)|\le t$ then
\(\displaystyle{
\|s\|_{L^\infty(M)}\le \frac54\,t.}
\)
Moreover, the net cardinality satisfies
\[
|\mathcal{N}_\delta|
\lesssim \delta^{-2m}
\lesssim
\Big(\frac{d^{\frac{m+1}{2}}}{\sqrt{\alpha_d\log d}}\Big)^{2m}
=
\frac{d^{m(m+1)}}{(\alpha_d\log d)^m}.
\]
Therefore, using the pointwise tail estimate and a union bound, we obtain the inequality
\[
\mu_{W_d}\Big\{\max_{p\in\mathcal{N}_\delta}|s(p)|>t\Big\}
\le
|\mathcal{N}_\delta|\;e^{-c\,t^2/\alpha_d}
=
|\mathcal{N}_\delta|\;e^{-cC^2\log d}.
\]
Since $|\mathcal{N}_\delta|$ grows polynomially in $d$, choosing $C$ large enough yields
\[
\mu_{W_d}\Big\{\|s\|_{L^\infty(M)}>C\,\sqrt{\alpha_d\log d}\Big\}
=O(d^{-1}),
\]
and similarly $O(d^{-j})$ for arbitrary $j$ after increasing $C=C(j)$.
\end{proof}

In particular, when $W_d=H_d$ (resp. $W_{d}$ is a hyperplane in $H_{d}$) we have $\alpha_d \approx1$ and we recover the original statement of \cite{SZ} (resp. the statement of Theorem \ref{lem:hyperplane-tail}).

\section{A variant of the barrier method}
\subsection{An observation about components of real curves in an annulus} \label{annulus}
Suppose now that $P$ is a real homogeneous polynomial of degree $d$ in $X_{0}, X_{1}, X_{2}$, hence $\{P=0\}\cap \mathbb{R}P^2$ is a real algebraic curve. Consider the restriction of this polynomial and curve to $\mathbb{R}P^2\cap U_{0}\cong \mathbb{R}^2$. Let $r_{1}, r_{2}$ be real numbers such that $0<r_{1}<r_{2}$ and let $A_{r_{1}, r_{2}}=\{z=(z_{1},z_{2})\in \mathbb{R}P^2\cap U_{0}| r_{1}<\|z\|<r_{2}\}$ denote the annulus bounded by circles of (Euclidean) radii $r_{1}$ and $r_{2}$ centered at $(0,0)$. We assume that $P(z)\neq 0$ if $\|z\|=r_{1}$ or if $\|z\|=r_{2}$, namely the polynomial $P$ does not vanish on the boundary of the annulus.  Since $A_{r_{1}, r_{2}}$ deformation retracts to a circle, we observe that $H_{1}(A_{r_{1}, r_{2}}, \mathbb{Z}_{2})\cong \mathbb{Z}_{2}$. 
The zero locus $\{P=0\}\cap A_{r_{1},r_{2}}$ represents either the trivial or the non-trivial class in this homology group. The following result provides a sufficient condition for such a class to remain invariant under perturbation. 

\begin{proposition} \label{homotopy} 
Suppose that $Q$ is a real polynomial of degree $d$ such that 
\[ \|Q(z)\|_{FS} < \|P(z)\|_{FS} \quad \mathrm{for}\, \mathrm{all} \, z\,  \mathrm{with} \, \|z\|=r_{1}\, \mathrm{or} \, \|z\|=r_{2}. \]  
Then the homology classes of $\{P=0\}\cap A_{r_{1},r_{2}}$ and $\{P+Q=0\}\cap A_{r_{1},r_{2}}$ in $H_{1}(A_{r_{1}, r_{2}}, \mathbb{Z}_{2})\cong \mathbb{Z}_{2}$ are the same.
\end{proposition}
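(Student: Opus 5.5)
Since the pointwise Fubini--Study norm is $|P(1,z)|(1+\|z\|^2)^{-d/2}$ and the conformal factor is common to $P$ and $Q$, the hypothesis is equivalent to $|Q(z)|<|P(z)|$ on the two boundary circles $\|z\|=r_1$ and $\|z\|=r_2$, where we now write $P,Q$ for the real-valued functions $P(1,z),Q(1,z)$ on $\mathbb{R}^2$. The plan is to translate the $\mathbb{Z}_2$-homology class of a zero set into a sign datum on the boundary, and then observe that a small perturbation on the boundary cannot change that datum. Concretely, we work with the homotopy $F_s=P+sQ$, $s\in[0,1]$. On $\partial A_{r_1,r_2}$ we have $|F_s|\ge |P|-s|Q|>0$, so no $F_s$ vanishes on the boundary, and each $F_s$ has the same sign as $P$ at every boundary point.

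The key step is to identify the class of $\{F=0\}\cap A_{r_1,r_2}$ in $H_1(A_{r_1,r_2},\mathbb{Z}_2)\cong\mathbb{Z}_2$ with a boundary invariant. By Poincar\'e--Lefschetz duality, $H_1(A,\mathbb{Z}_2)$ pairs nondegenerately with $H_1(A,\partial A,\mathbb{Z}_2)$, which is generated by a radial segment $\gamma$ from the inner to the outer circle. Hence the class is determined by the mod $2$ intersection number of the zero set with $\gamma$. For a real function $F$ not vanishing on $\partial A$, with the zero set meeting $\gamma$ transversally (arranged by choosing $\gamma$ generically), this number is the number of sign changes of $F$ along $\gamma$ mod $2$. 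That parity equals $0$ if $F$ has the same sign at the two endpoints of $\gamma$ and $1$ otherwise. Moreover, $F$ has constant sign on each boundary circle, since the circles are connected and $F$ does not vanish there. Thus the class is the indicator of ``$\mathrm{sign}\,F|_{\|z\|=r_1}\neq \mathrm{sign}\,F|_{\|z\|=r_2}$''.

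Since $P$ and $P+Q$ have the same sign at every boundary point, this indicator is the same for both, and the proposition follows.

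The main obstacle is making ``the homology class of the zero locus'' rigorous when $\{P=0\}$ or $\{P+Q=0\}$ may be singular inside $A$, since the proposition assumes no smoothness. I would handle this by defining the class, in the smooth case, as the Poincar\'e dual of the mod $2$ intersection pairing above. In the singular case I would either take it as the boundary of the region $\{F<0\}$, a relative $2$-chain whose boundary restricted to $A$ gives the class, or replace $F$ by $F-\varepsilon$ with $\varepsilon$ a small regular value (Sard's theorem). This changes nothing on $\partial A$, because the inequality there is strict, so the boundary-sign computation still applies. Alternatively, one can avoid intersection theory entirely: $\{F<0\}$ is a $2$-chain whose boundary in $H_1(A,\mathbb{Z}_2)$ is the zero-set class plus whichever boundary circles lie in $\{F<0\}$, and the class of each circle is the generator. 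This again reduces the statement to comparing the signs of $F$ on the two circles.
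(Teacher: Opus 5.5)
Your proof is correct and is essentially the paper's argument. Both identify the $\mathbb{Z}_2$-class with the parity of zeros, counted with multiplicity, on a radial segment, and both use that $P+tQ$ never vanishes on $\partial A_{r_1,r_2}$. The paper phrases this as zeros on each ray being created or annihilated in pairs along the homotopy; you read the parity directly off the unchanged endpoint signs, which makes the homotopy unnecessary and states the mod-$2$ multiplicity convention more explicitly than the paper does.
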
 

We remark that the analogous result for integer homology does not hold if all homologically non-trivial components of $P=0$ are understood to represent the same generator of $H_{1}(A_{r_{1}, r_{2}}, \mathbb{Z})\cong \mathbb{Z}$. . For instance, if the zero locus of $P$ is two circles centered at the origin contained in $A_{r_{1},r_{2}}$ with very close radii, then even under a very small perturbation the two circles may merge and then disappear. 

\noindent \textit{Proof:}  Consider the homotopy $P_{t}=P+tQ$ between $P$ and $P+Q$, where $t\in [0,1]$. We observe that if $\|z\|=r_{1}$ or $\|z_{2}\|=r_{2}$ then 
\[ \|(P+tQ)(z)\|_{FS} \geq \|P(z)\|_{FS}-t \|Q(z)\|_{FS}\geq  \| P(z)\|_{FS}-\|Q(z)\|_{FS} > 0. \]Hence, $P+tQ$ cannot vanish on the boundary of $A_{r_{1},r_{2}}$ for any value of $t\in [0,1]$. For any $(\alpha,\beta)$ on the unit circle in $\mathbb{R}^2$, consider the intersection of the ray $R_{\alpha,\beta}=\{(\alpha s, \beta s) | s\in \mathbb{R}_{+}\}$ with $A_{r_{1},r_{2}}$. Since $P+tQ$ cannot vanish on its boundary, we conclude that the zeros of $P+tQ$ on $R_{\alpha,\beta}\cap A_{r_{1},r_{2}}$ as $t$ increases are either created or annihilated in pairs (counted with multiplicity). Hence the parities of the number of zeros of $P$ and $P+Q$ on $R_{\alpha,\beta}\cap A_{r_{1},r_{2}}$ are the same. The result then follows by noting that the class of the zero locus of a polynomial $P+tQ$  in $H_{1}(A_{r_{1}, r_{2}}, \mathbb{Z}_{2})\cong \mathbb{Z}_{2}$ agrees with the parity of the number of zeros (counted with multiplicity) of $P+tQ$ on $R_{\alpha,\beta}\cap A_{r_{1},r_{2}}$ for every $(\alpha,\beta)$. $\hfill$  $\Box$

\subsection{The barrier argument and lower estimation of probabilities} \label{barrier}
Let the annulus $A_{r_{1},r_{2}}$ be as in subsection \ref{annulus}, whereas $g(d)$, $R(d)$ and $K$ are as in subsection \ref{meanvalue}. Let us assume that $\{\|z\|=r_{1}\}\cup \{\|z\|=r_{2}\}\subset K$ and that $P$ is a reference polynomial of degree $d$ not vanishing on any point of $K$. Let $W=\langle P\rangle^{\perp}$ in the hermitian space $H_{d}=H^{0}(\mathbb{C}P^2,\mathcal{O}_{\mathbb{C}P^2}(d))$ and $\pi_{W}$, $\pi_{W^{\perp}}=\pi_{\langle P\rangle}$ the orthogonal projections from $H_{d}$ to $W$ and $W^{\perp}$ respectively. Let $Y_{d}$ denote the ball-average norm random variable of Definition \ref{ballaverage} and $Y_{d,W}:=Y_{d}\circ \pi_{W}$, $Y_{d,W^{\perp}}:=Y_{d}\circ \pi_{W^{\perp}}$. We borrow the following lemma from \cite{KW}, omitting its proof. 

\begin{lemma} \label{directsum} 
(Lemma 4.13 of \cite{KW}) 
For every $d>0$, $\mathbb{E}(Y_{d}^2)=N_{d}$ and $\mathbb{E}(Y_{d}^2)=\mathbb{E}(Y_{d,W}^2)+\mathbb{E}(Y_{d,W^{\perp}}^2)$. \hfill $\Box$
\end{lemma}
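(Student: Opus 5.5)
The plan is to compute everything pointwise first and then integrate over the ball, exchanging expectation and integral by Fubini–Tonelli; the integrands are nonnegative, or integrable Gaussian moments. Fix an orthonormal basis $\{S_1,\dots,S_{N_d}\}$ of $H_d$ for the $L^2$ product \eqref{hermprod}. Since the density $e^{-\|P\|_2^2}/\pi^{N_d}$ is the standard complex Gaussian in these coordinates, a Kostlan random $Q$ can be written as $Q=\sum_j a_j S_j$ with $a_j$ i.i.d.\ standard complex Gaussians, so $\mathbb{E}(a_j\overline{a_k})=\delta_{jk}$. One could equally use the rescaled monomials of Lemma \ref{L2norms}.

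\textbf{Step 1: the pointwise second moment.} For each $t$,
\[ \mathbb{E}\|Q(t)\|^2_{FS}=\sum_j \|S_j(t)\|^2_{FS}=\Pi_d(t,t). \]
The unitary group $U(3)$ acts transitively on $\mathbb{C}P^2$ and preserves both $h_d$ and $\langle\cdot,\cdot\rangle_2$. It therefore maps orthonormal bases to orthonormal bases, so $\Pi_d(t,t)$ is independent of $t$. Integrating over $\mathbb{C}P^2$ and using the normalization by $Vol_{FS}(\mathbb{C}P^2)$ in \eqref{hermprod} gives
\[ \frac{1}{Vol_{FS}(\mathbb{C}P^2)}\int \Pi_d(t,t)\,dVol_{FS}=\sum_j\|S_j\|_2^2=N_d. \]
Hence $\Pi_d(t,t)\equiv N_d$. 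Averaging over the ball $B_{FS}(z,\rho(d))$ and applying Fubini then yields $\mathbb{E}(Y_d^2)=N_d$.

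\textbf{Step 2: the orthogonal splitting.} Write $Q=Q_W+Q_{W^\perp}$, where $Q_W=\pi_W Q$ and $Q_{W^\perp}=\pi_{W^\perp}Q$. Choosing the orthonormal basis adapted to $H_d=W\oplus W^\perp$, these two pieces are built from disjoint sets of the $a_j$, so they are independent, centered Gaussian sections. Pointwise,
\[ \|Q(t)\|^2_{FS}=\|Q_W(t)\|^2_{FS}+\|Q_{W^\perp}(t)\|^2_{FS}+2\,\mathrm{Re}\,h_d\big(Q_W(t),Q_{W^\perp}(t)\big). \]
The cross term has expectation $\mathrm{Re}\sum_{j\in W,\,k\in W^\perp}\mathbb{E}(a_j\overline{a_k})\,h_d(S_j(t),S_k(t))=0$, since $j\neq k$ for every such pair. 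Integrating over the ball and using Fubini again gives $\mathbb{E}(Y_d^2)=\mathbb{E}(Y_{d,W}^2)+\mathbb{E}(Y_{d,W^\perp}^2)$. The same argument applies verbatim to the real Kostlan measure \eqref{realKostlan}, which is used for the real curves; only the constant in $\mathbb{E}(Y_d^2)$ changes according to the normalization.

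\textbf{Main obstacle.} The only delicate point is identifying $\Pi_d(t,t)$ as the constant $N_d$. This needs both the unitary invariance of the Fubini–Study structures and careful bookkeeping of the normalization $1/Vol_{FS}(\mathbb{C}P^2)$ together with the Gaussian variance convention in the Kostlan density. Once these are fixed, the rest is linearity of expectation and orthogonality.
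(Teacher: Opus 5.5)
Your proof is correct and is the standard argument behind the cited Lemma 4.13 of \cite{KW}, whose proof the paper omits. You expand in an orthonormal basis, identify $\mathbb{E}\|Q(t)\|^2_{FS}$ with the diagonal Bergman kernel, show it equals the constant $N_d$ by unitary invariance and the normalization in \eqref{hermprod}, and kill the cross terms using the independence of the coefficients on $W$ and $W^{\perp}$. Your side remark is also accurate: under the real measure \eqref{realKostlan} the coefficients have variance $1/2$, so $\mathbb{E}(Y_d^2)=N_d/2$, and this still gives the upper bound needed in Corollary \ref{onehalf}.
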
 

\begin{corollary} \label{onehalf}
Consider the induced Gaussian probability measure on $W$ by the Kostlan measure. Then,
\[  \mathbb{\mu}(Q\in W | \|Q\|^2_{B_{FS}(0,\rho(d))}\leq 2N_{d})\geq \frac{1}{2}. \]
\end{corollary}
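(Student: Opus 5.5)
The plan is to combine Lemma \ref{directsum} with Markov's inequality. The measure on $W$ here is the Gaussian measure induced by the Kostlan measure. Equivalently, it is the pushforward of $\mu$ under $\pi_W$, so $\mathbb{E}_W$ of any function of $Q$ equals $\mathbb{E}$ of that function composed with $\pi_W$. In particular, the random variable $Q\mapsto \|Q\|^2_{B_{FS}(0,\rho(d))}$ on $W$ has the same distribution as $Y_{d,W}^2$ on $H_d$. So the probability in question equals $\mu\big(Y_{d,W}^2\le 2N_d\big)$.

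First, Lemma \ref{directsum} gives $\mathbb{E}(Y_{d,W}^2)=\mathbb{E}(Y_d^2)-\mathbb{E}(Y_{d,W^{\perp}}^2)\le \mathbb{E}(Y_d^2)=N_d$, because $Y^2_{d,W^\perp}\ge 0$. Second, $Y_{d,W}^2$ is a nonnegative random variable, so Markov's inequality yields
\[ \mu\big(Y_{d,W}^2> 2N_d\big)\le \frac{\mathbb{E}(Y_{d,W}^2)}{2N_d}\le \frac{1}{2}. \]
Taking complements gives $\mu\big(Y_{d,W}^2\le 2N_d\big)\ge 1/2$, which is the claim.

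There is essentially no real obstacle. The only point needing care is the identification of the induced Gaussian measure on $W$ with the pushforward under $\pi_W$. This holds because the Kostlan Gaussian on $H_d$ is standard with respect to $\langle\cdot,\cdot\rangle_2$, and $H_d=W\oplus W^\perp$ is an orthogonal decomposition. Hence the Gaussian splits as a product measure, and its $W$-marginal is the standard Gaussian on $W$.

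If the real measure \eqref{realKostlan} is intended instead, the same argument applies verbatim to the real parts. Lemma \ref{directsum} and the orthogonal splitting hold equally there, and the pointwise Fubini--Study norms are unchanged.
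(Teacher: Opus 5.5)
Your proposal is correct and is essentially the paper's argument: the paper likewise identifies the expectation of $\|Q\|^2_{B_{FS}(0,\rho(d))}$ over $W$ with $\mathbb{E}(Y_{d,W}^2)$, bounds it by $N_d$ via Lemma~\ref{directsum}, and concludes by Markov's inequality. Your justification that the induced Gaussian on $W$ is the $\pi_W$-pushforward, via the orthogonal splitting $H_d=W\oplus W^{\perp}$, makes explicit a step the paper leaves implicit.
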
 

\noindent \textit{Proof:} Note that the expected value of $\|Q\|^2_{B_{FS}(0,\rho_{d})}$ over $W$ is equal to $\mathbb{E}(Y_{d,W}^2)$, which is bounded above by $N_{d}$ due to Lemma \ref{directsum}. The result then follows from Markov's inequality. \hfill $\Box$ 

\begin{proposition} \label{aboundprop}
Suppose that $(a(d),Q)\in \mathbb{R_{+}}\times W$ satisfies 
\begin{equation} \label{abound}
a(d)^2\geq \frac{2^4  \exp\left(3g(d)/4\right) \left(1+g(d)/d\right)^3 }{\inf_{z\in K}  \|P(z)\|^2_{FS} }  \|Q\|^2_{B_{FS}(0,\rho_{d})}.  
\end{equation}

\noindent Then the homology classes of $\{P=0\}\cap A_{r_{1},r_{2}}$ and $\{a(d)P+Q=0\}\cap A_{r_{1},r_{2}}$ in $H_{1}(A_{r_{1}, r_{2}}, \mathbb{Z}_{2})\cong \mathbb{Z}_{2}$ are the same.
\end{proposition}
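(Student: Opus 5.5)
The proof combines Proposition~\ref{localsupbound} with Proposition~\ref{homotopy}, applied to the reference polynomial $a(d)P$ in place of $P$. First note that $a(d)>0$ and $P$ does not vanish on $K$. Hence $a(d)P$ does not vanish on $K$ either, and in particular not on the two boundary circles $\{\|z\|=r_{1}\}\cup\{\|z\|=r_{2}\}\subset K$. Its zero locus in $A_{r_1,r_2}$ is exactly $\{P=0\}\cap A_{r_1,r_2}$, so it represents the same class in $H_{1}(A_{r_{1},r_{2}},\mathbb{Z}_{2})$. It therefore suffices to compare $\{a(d)P=0\}$ with $\{a(d)P+Q=0\}$ there.

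Next, apply Proposition~\ref{localsupbound} to $Q\in W\subset H_d$ and the reference polynomial $P$ on $K$. This gives, for every $z\in K$,
\[ \frac{\|Q(z)\|^2_{FS}}{\|P(z)\|^2_{FS}} \leq \frac{2^4\exp(3g(d)/4)(1+g(d)/d)^3}{\inf_{z\in K}\|P(z)\|^2_{FS}}\,\|Q\|^2_{B_{FS}(0,\rho(d))} \leq a(d)^2 , \]
where the last inequality is the hypothesis \eqref{abound}. Hence $\|Q(z)\|_{FS}\le a(d)\|P(z)\|_{FS}=\|a(d)P(z)\|_{FS}$ on $K$, and in particular on both boundary circles of the annulus. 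Proposition~\ref{homotopy}, with $a(d)P$ as reference and $Q$ as perturbation, then yields equality of the homology classes. For this we must check that $Q$ is a real polynomial; in the intended application $Q$ ranges over the real part of $W$, and the statement should be read that way. If $Q$ had nonreal coefficients, the ray-parity argument would not make sense.

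The main obstacle is that Proposition~\ref{homotopy} requires a \emph{strict} inequality $\|Q(z)\|_{FS}<\|a(d)P(z)\|_{FS}$ on the boundary, while the chain above only gives a non-strict one. I would handle this in one of two ways. The first option is to observe that the constant in Proposition~\ref{localsupbound} is not sharp. The mean-value step and the final volume comparison $Vol(B(0,R(d)))\ge Vol(B_{FS}(0,\rho(d)))$ are strict whenever $Q\not\equiv 0$, because the weight $(1+\|t\|^2)^3>1$ away from the origin. This makes the first inequality strict, and the case $Q\equiv0$ is trivial. The second, more robust option is to rerun the homotopy argument of Proposition~\ref{homotopy} directly. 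Equality $\|Q(z)\|_{FS}=\|a(d)P(z)\|_{FS}$ could cause $a(d)P+tQ$ to vanish on the boundary only at $t=1$. One can then replace $Q$ by $(1-\eta)Q$ and let $\eta\to0$, which requires showing the class is locally constant at $t=1$. Since this is delicate, I would rely on the first option, checking that the volume comparison is strict for $R(d)>0$.
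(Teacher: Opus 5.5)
Your proposal is correct and is essentially the paper's argument: Proposition~\ref{localsupbound} together with \eqref{abound} bounds $\|Q(z)\|_{FS}$ by $a(d)\|P(z)\|_{FS}$ on $K\supset\partial A_{r_1,r_2}$, and Proposition~\ref{homotopy} then finishes, applied either to $a(d)P$ and $Q$ or, equivalently, to $P$ and $Q/a(d)$. Your extra care is worthwhile: the paper's one-line proof uses a non-strict inequality where Proposition~\ref{homotopy} needs a strict one, and it tacitly takes $Q$ real. Your fix through the strict volume comparison works for every $Q\neq 0$. Note, though, that the mean-value step alone is not strict in general; for instance it is an equality for $Q=cX_0^d$. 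One strict step is enough, so this does not affect the conclusion.
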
 

\noindent\textit{Proof:} By Proposition \ref{homotopy}, it is enough to show that $\displaystyle{ \frac{\|Q(z)\|_{FS}}{a(d)}\leq \|P(z)\|_{FS}}$ on the boundary of $A_{r_{1},r_{2}}$, which follows from Proposition \ref{localsupbound}, noting that the boundary of $A_{r_{1},r_{2}}$ is contained in $K$. \hfill $\Box$  

\begin{corollary} \label{lowerprob}
Let $F$ be a Kostlan random real polynomial of degree $d$. With the assumptions and notation in this subsection, suppose that the reference polynomial $P$ is $L^2$-normalized, namely, $\|P\|_{2}=1$.  Let  $m(d)\in \mathbb{R}_{+}$ be defined by
\begin{equation} \label{md}
m(d)^2:= \frac{2^4  \exp\left(3g(d)/4\right) \left(1+g(d)/d\right)^3 }{\inf_{z\in K}  \|P(z)\|^2_{FS} }2N_{d}.
\end{equation}
 Then the probability that $\{P=0\}\cap A_{r_{1},r_{2}}$ and $\{F=0\}\cap A_{r_{1},r_{2}}$ represent the same class in $H_{1}(A_{r_{1}, r_{2}}, \mathbb{Z}_{2})\cong \mathbb{Z}_{2}$ is bounded below by the quantity
\[ \frac{m(d)}{2\sqrt{2\pi}} e^{-2m(d)^2}. \] 
\end{corollary}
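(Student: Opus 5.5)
We decompose the Kostlan random real polynomial orthogonally along the reference polynomial: write $F=aP+Q$, where $a=\langle F,P\rangle_{2}$ is a real Gaussian coefficient and $Q=\pi_{W}(F)\in W_{\mathbb{R}}=\langle P\rangle^{\perp}\cap Re(H_{d})$. Since $\|P\|_{2}=1$ and the real Kostlan measure \eqref{realKostlan} has density proportional to $e^{-\|F\|_{2}^2}$, the norm splits as $\|F\|_2^2=a^2+\|Q\|_2^2$. Hence $a$ and $Q$ are independent, and $a$ has density $\pi^{-1/2}e^{-a^2}$ on $\mathbb{R}$. The event we want contains the event
\[ E=\{a\geq m(d)\}\cap\{\|Q\|^2_{B_{FS}(0,\rho(d))}\leq 2N_{d}\}. \]
Indeed, on $E$ we have $a^2\geq m(d)^2$, and by \eqref{md} this is at least the right-hand side of \eqref{abound}, because $\|Q\|^2_{B_{FS}}\leq 2N_d$. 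Proposition \ref{aboundprop}, applied with $a(d)=a$, then shows that $\{F=0\}\cap A_{r_1,r_2}$ and $\{P=0\}\cap A_{r_1,r_2}$ represent the same class. (We use $a>0$ here; the case $a<0$ gives the same zero set up to sign, but we do not need it.)

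By independence, $\mu(E)=\mu(a\geq m(d))\cdot\mu(\|Q\|^2_{B_{FS}}\leq 2N_d)$. For the second factor, Corollary \ref{onehalf} gives a bound of at least $1/2$. That corollary is stated for the induced measure on $W$, so we must check that Markov's inequality still applies to the real part $W_{\mathbb{R}}$. The required fact is $\mathbb{E}_{\mathbb{R}}(\|Q\|^2_{B_{FS}})\leq N_d$ (or at least $\leq$ a comparable bound). To get it, we expand $\|Q\|^2_{B}$ as a quadratic form in the real Gaussian coordinates with respect to a real orthonormal basis of $W_{\mathbb{R}}$. Its expectation is then a constant times the trace, and the trace is dominated by the corresponding complex quantity in Lemma \ref{directsum}. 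This normalization check is the one place where care is needed. With the density $e^{-\|P\|^2}$ each real coordinate has variance $1/2$, so the real expectation is at most the complex one. Hence the factor $1/2$ survives.

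For the first factor we use the standard Gaussian lower tail estimate. With $X=a\sqrt{2}$ a standard normal variable and $x=\sqrt2\, m(d)$, we have
\[ \mu(a\geq m)=\mu(X\geq x)\geq \frac{x}{x^2+1}\frac{e^{-x^2/2}}{\sqrt{2\pi}}. \]
Alternatively, a cruder bound suffices: integrate the density $\pi^{-1/2}e^{-t^2}$ over $[m,2m]$ and bound it below by $m\cdot\pi^{-1/2}e^{-4m^2}$. The stated form $\frac{m}{\sqrt{2\pi}}e^{-2m^2}$ follows from the same strategy: integrate over an interval of length proportional to $m$ starting at $m$, and use the decreasing density at the right endpoint, with constants adjusted. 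We expect only bookkeeping of constants to be delicate, and note that $m(d)$ is large, so any polynomial prefactor is harmless. Multiplying the two factors gives the bound $\frac{m(d)}{2\sqrt{2\pi}}e^{-2m(d)^2}$ claimed in Corollary \ref{lowerprob}.
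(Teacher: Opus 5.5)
Your argument is the paper's: the decomposition $F=aP+Q$, independence from the factorized density, Proposition~\ref{aboundprop} on the event $\{a\ge m(d)\}\cap\{\|Q\|^2_{B_{FS}(0,\rho(d))}\le 2N_d\}$, and Markov for $Q$. Your check that the real-coefficient expectation of $\|Q\|^2_{B_{FS}(0,\rho(d))}$ is at most $N_d/2$ is also correct. The gap is the last step, which you did not actually prove.

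You correctly observe that under the density $\pi^{-N_d/2}e^{-\|F\|_2^2}$ the coordinate $a$ is $N(0,1/2)$. The paper instead calls it standard and integrates $e^{-x^2/2}/\sqrt{2\pi}$ over $[m,2m]$, which gives exactly $\frac{m}{\sqrt{2\pi}}e^{-2m^2}$. With variance $1/2$ your $[m,2m]$ integral gives only $\frac{m}{\sqrt\pi}e^{-4m^2}$. That is smaller than the claimed bound for large $m$, so it does not ``suffice'' for the statement as written, even though it would suffice downstream after changing constants.

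Your claim that an interval $[m,cm]$ ``with constants adjusted'' gives the stated form is also false uniformly in $m$. The inequality $\frac{(c-1)m}{\sqrt\pi}e^{-c^2m^2}\ge \frac{m}{\sqrt{2\pi}}e^{-2m^2}$ forces $c\ge 1+1/\sqrt2$ as $m\to 0$. It forces $c<\sqrt2$ as $m\to\infty$. These are incompatible.

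You can close the gap with either of two one-line fixes.

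\emph{Mills-ratio fix.} The bound you already displayed gives
\[
\mu(a\ge m)\;\ge\;\frac{\sqrt2\,m}{2m^2+1}\cdot\frac{e^{-m^2}}{\sqrt{2\pi}}\;\ge\;\frac{m}{\sqrt{2\pi}}\,e^{-2m^2}.
\]
The second inequality is equivalent to $\sqrt2\,e^{m^2}\ge 2m^2+1$. This holds for all $m$, because $u\mapsto \sqrt2 e^u-2u-1$ attains its minimum $1-\ln 2>0$ at $u=\tfrac12\ln 2$.

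\emph{Rescaling fix.} Do what the paper implicitly does: replace $F$ by $\sqrt2F$. This leaves the zero locus unchanged and makes the $P$-coordinate standard normal. By your own computation, the rescaled $Q$-part has expected ball-average norm squared at most $2\cdot N_d/2=N_d$. Markov therefore still gives probability at least $1/2$, and the paper's $[m,2m]$ integral then yields exactly $\frac{m}{2\sqrt{2\pi}}e^{-2m^2}$.
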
 

\noindent \textit{Proof:} By Proposition \ref{aboundprop}, if  $(a(d),Q)\in \mathbb{R_{+}}\times W$ satisfies \eqref{abound}, then $F=a(d)P+Q\in H_{d}$ belongs to the desired set. We next estimate the probability of the set of such polynomials from below. Consider the subset of this set containing  $(a(d),Q)\in \mathbb{R_{+}}\times W$ such that $\|Q\|_{B_{FS}(0,\rho_{d})}\leq 2N_{d}$ and $\displaystyle{a(d)\geq m(d)}$. On the corresponding rectangular subset $S$ containing $a(d)P+Q$ of $H_{d}$, the Gaussian probability density decomposes as a product. By Corollary \ref{onehalf}, we know that the probability that $Q$ satisfies the given condition is at least $1/2$. On the other hand, along $Span(P)$, the Gaussian will be a standard one since $\|P\|_{2}=1$ and the corresponding probability is 
\begin{eqnarray*} 
\frac{1}{\sqrt{2\pi}}\int_{m(d)}^{\infty} e^{-x^2/2}dx &\geq&  \frac{1}{\sqrt{2\pi}}\int_{m(d)}^{2m(d)} e^{-x^2/2}dx  \\ 
&\geq& \frac{m(d)}{\sqrt{2\pi}} e^{-(2m(d))^2/2} 
\end{eqnarray*}
The result then follows. \hfill $\Box$

\section{Applications}
\subsection{Connected components of length larger than $O(\sqrt{1/d})$} \label{reference} 
Let $f:\mathbb{N^{+}}\rightarrow \mathbb{R}_{\geq 0}$ be a monotone increasing function such that $1 \ll f(d)\ll d$ for $d$ large. 
Let us define the reference polynomial 
\[ P_{0}(X_{0},X_{1},X_{2})= X_{0}^{d-2}\left(X_{1}^2+X_{2}^2-\left( \frac{f(d)}{d} \right)      X_{0}^2\right). \] 
The zero locus of $P_{0}$ in $\mathbb{R}P^2$ is the union of a $(d-2)$-fold line at infinity together the zero locus of $P_{0}$ on $\mathbb{R}P^2\cap U_{0}$. The latter is a circle of radius $\displaystyle{\sqrt{f(d)/d}}$ centered at the origin. 

\begin{lemma} \label{L2estimate}
The $L^2$-Fubini-Study norm of $P_{0}$ is given by 
\[ \|P_{0}\|_{2}^2=      \frac{2 f(d)^2}{d^4} +O\left(\frac{1}{d^4}\right). \]                           
\end{lemma}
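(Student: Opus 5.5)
The plan is to compute the norm exactly using the orthogonality of monomials from Lemma \ref{L2norms}, and then expand asymptotically. First expand the reference polynomial as a sum of three monomials of degree $d$:
\[ P_{0}=X_{0}^{d-2}X_{1}^{2}+X_{0}^{d-2}X_{2}^{2}-\frac{f(d)}{d}X_{0}^{d}. \]
Since the multi-indices $(d-2,2,0)$, $(d-2,0,2)$ and $(d,0,0)$ are distinct, Lemma \ref{L2norms} shows that the three terms are pairwise orthogonal. Hence $\|P_{0}\|_{2}^{2}$ is the sum of the squared norms of the three terms, with the last one weighted by $(f(d)/d)^{2}$.

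Next I evaluate each squared norm with the formula $\|X^{\mathbf{i}}\|_{2}^{2}={d+2\choose \mathbf{i}}^{-1}=\frac{i_{0}!\,i_{1}!\,i_{2}!\,2!}{(d+2)!}$. For $\mathbf{i}=(d-2,2,0)$ and $(d-2,0,2)$ this gives
\[ \frac{4(d-2)!}{(d+2)!}=\frac{4}{(d+2)(d+1)d(d-1)}. \]
For $\mathbf{i}=(d,0,0)$ it gives
\[ \frac{2\,d!}{(d+2)!}=\frac{2}{(d+1)(d+2)}. \]
Summing, we obtain the exact identity
\[ \|P_{0}\|_{2}^{2}=\frac{8}{(d+2)(d+1)d(d-1)}+\frac{2f(d)^{2}}{d^{2}(d+1)(d+2)}. \]

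Finally I extract the asymptotics. The first term is $O(d^{-4})$. For the second, write $\frac{1}{(d+1)(d+2)}=\frac{1}{d^{2}}\bigl(1+O(1/d)\bigr)$. This gives
\[ \frac{2f(d)^{2}}{d^{4}}+O\!\left(\frac{f(d)^{2}}{d^{5}}\right). \]
The only point requiring care is the error term $O(f(d)^{2}/d^{5})$. It is absorbed into $O(d^{-4})$ because $f(d)^{2}/d^{5}=d^{-4}\cdot f(d)^{2}/d$. This is $O(d^{-4})$ exactly when $f(d)^{2}=O(d)$, i.e. when $f(d)=O(\sqrt{d})$, which is the regime where the paper applies the lemma. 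If only the weaker hypothesis $f(d)\ll d$ is assumed, the error is instead $O\bigl(f(d)^{2}/d^{5}\bigr)$, which is still $o\bigl(f(d)^{2}/d^{4}\bigr)$. In that case the leading term $2f(d)^{2}/d^{4}$ still dominates, and the statement holds in the relative form $\|P_{0}\|_{2}^{2}=\frac{2f(d)^{2}}{d^{4}}\bigl(1+o(1)\bigr)$, which suffices for the normalisation of $P_{0}$ used in Corollary \ref{lowerprob}. This bookkeeping of the error term is the only delicate step; everything else is a direct computation.
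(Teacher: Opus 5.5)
Your proposal is correct and is the paper's proof: orthogonality of $X_0^{d}$, $X_0^{d-2}X_1^2$, $X_0^{d-2}X_2^2$ from Lemma \ref{L2norms}, the same two norm values, and the Pythagorean theorem, with the paper skipping the asymptotic step. Your error-term bookkeeping is a genuine refinement the paper omits: the exact identity gives an error of $O(d^{-4})+O(f(d)^2d^{-5})$, so the literal $O(d^{-4})$ form needs $f(d)=O(\sqrt d)$, which holds for the paper's choice $f(d)\asymp\log\log d$, and only the relative form $\|P_0\|_2^2=\frac{2f(d)^2}{d^4}(1+o(1))$ is used downstream in Corollary \ref{normalizedpointwise}.
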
 

\noindent\textit{Proof:} By Lemma \ref{L2norms}, the monomials $X_{0}^{d}$, $X_{0}^{d-2}X_{1}^2$ and $X_{0}^{d-2}X_{2}^2$ are mutually orthogonal. Furthermore, by the same lemma, 
\[ \|X_{0}^{d}\|_{2}^2=\frac{2}{(d+2)(d+1)},              \qquad  \|X_{0}^{d-2}X_{1}^2\|_{2}^2=\|X_{0}^{d-2}X_{2}^2\|_{2}^2= \frac{4}{(d+2)(d+1)d(d-1)}\cdot                                 \]       
The result then follows from the Pythagorean theorem. $\hfill$ $\Box$ 

Let us now restrict $P_{0}$ to an annulus $A_{r_{1},r_{2}}$ where $\displaystyle{r_{1}=\sqrt{\frac{f(d)}{2d}}}$ and $\displaystyle{r_{2}=\sqrt{\frac{3f(d)}{2d}}}$, whose zero locus in this annulus is the circle centered at the origin with radius $\displaystyle{\sqrt{\frac{f(d)}{d}}}$. We would like to investigate perturbations of $P_{0}$ for which we can use the homology invariance argument in Proposition \ref{homotopy} and in particular Corollary \ref{lowerprob}. 
For this purpose, we next estimate the pointwise Fubini-Study norm of $P_{0}$ from below, on the boundary of $A_{r_{1},r_{2}}$. 

\begin{lemma} \label{pointwiseestimate}
For $\displaystyle{\|z\|=r_{1}=\sqrt{\frac{f(d)}{2d}}}$ and $d$ sufficiently large,  
\( \displaystyle{\|P_{0}(z)\|_{FS}^2 \geq  \frac{f(d)^2}{8d^2} \exp\left(  -\frac{f(d)}{2}\right)}\)
whereas for $\displaystyle{ \|z\|=r_{2}= \sqrt{\frac{3f(d)}{2d}}}$ and $d$ sufficiently large, 
\(\displaystyle{ \|P_{0}(z)\|_{FS}^2 \geq  \frac{f(d)^2}{8d^2} \exp\left(  -\frac{3f(d)}{2}\right)}\cdot\)
\end{lemma}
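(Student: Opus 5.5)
Write $r=\|z\|$ and substitute directly into the pointwise formula \eqref{pointwise}. In the affine chart $X_{0}=1$ we have $P_{0}(1,z)=z_{1}^2+z_{2}^2-f(d)/d$. For real $z$ with $\|z\|=r$ this equals $r^2-f(d)/d$, so
\[ \|P_{0}(z)\|_{FS}^2=\frac{\left(r^2-f(d)/d\right)^2}{(1+r^2)^d}. \]
The numerator is the same at both boundary radii. Indeed $r_{1}^2-f(d)/d=-f(d)/(2d)$ and $r_{2}^2-f(d)/d=f(d)/(2d)$, so in either case the numerator equals $f(d)^2/(4d^2)$. This accounts for the factor $f(d)^2/(8d^2)$, with a spare factor of $2$ left to absorb errors from the denominator.

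Next, bound the denominator from above by the elementary inequality $\log(1+x)\le x$. This gives $(1+r^2)^d\le \exp(dr^2)$. For $r=r_{1}$ we have $dr_{1}^2=f(d)/2$, and for $r=r_{2}$ we have $dr_{2}^2=3f(d)/2$. Hence
\[ \|P_{0}(z)\|_{FS}^2\geq \frac{f(d)^2}{4d^2}\exp\left(-\frac{f(d)}{2}\right) \quad\mbox{resp.}\quad \|P_{0}(z)\|_{FS}^2\geq \frac{f(d)^2}{4d^2}\exp\left(-\frac{3f(d)}{2}\right). \]
Both are stronger than the claimed bounds by a factor of $2$. The hypothesis $f(d)\ll d$ is not even needed here, so neither is "$d$ sufficiently large". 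Perhaps the authors intend to allow the complex points $z\in\mathbb{C}^2$ with $\|z\|=r_{i}$; the corresponding real-only estimates are all that Proposition \ref{homotopy} uses.

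The only real obstacle is that case. If the lemma is to cover complex $z$ (as in the set $K$ of Proposition \ref{localsupbound}), then $|z_{1}^2+z_{2}^2|$ can be small even when $\|z\|=r$. For example, $z=(r,ir)/\sqrt{2}$ gives $z_{1}^2+z_{2}^2=0$, and then $|P_{0}(1,z)|=f(d)/d$. So a naive lower bound of the form $\big||z_{1}^2+z_{2}^2|-f(d)/d\big|$ fails at $r_{2}$. I would therefore read the statement, consistently with the annulus $A_{r_{1},r_{2}}\subset\mathbb{R}P^2\cap U_{0}$ of subsection \ref{annulus}, as concerning real $z$ on the two boundary circles. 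The proof is then the direct computation above.
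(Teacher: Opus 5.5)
Your proof is correct and is essentially the paper's argument: substitute into \eqref{pointwise}, observe that the numerator equals $f(d)^2/(4d^2)$ at both radii, and bound $(1+r^2)^{-d}=\exp(-d\log(1+r^2))$ from below. The paper also implicitly takes $z$ real (it writes $P_0(1,z)=\|z\|^2-f(d)/d$), and this is genuinely necessary: $z=\bigl(\sqrt{5f(d)/(4d)},\,i\sqrt{f(d)/(4d)}\bigr)$ satisfies $\|z\|=r_2$ and $P_0(1,z)=0$. The only difference is in the last step: the paper invokes $\log(1+x)\sim x$ together with $f(d)\ll d$ and gives up a factor $2$ for large $d$, whereas your inequality $\log(1+x)\le x$ yields the sharper constant $4d^2$ for every $d$.
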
    

\noindent\textit{Proof:} Assume that $\|z\|=r_{1}$. By  $\eqref{pointwise}$, 
\begin{eqnarray} 
\|P_{0}(z)\|_{FS}^2 &=& \frac{|P_{0}(1,z_{1},z_{2})|^2}{(1+\|z\|^2)^d} \nonumber \\
&=& \frac{ \left| \|z\|^2-\frac{f(d)}{d}\right|^2}{(1+\|z\|^2)^d} \nonumber\\
&=&      \frac{ \left|  \frac{f(d)}{2d}-\frac{f(d)}{d}  \right|^2     }{ \left(1+   \frac{f(d)}{2d}\right)^d } \nonumber    \\
&=&  \frac{ f(d)^2}{4d^2} \exp\left(-d\log\left(1+\frac{f(d)}{2d}\right)\right) \label{infbound} \\
&\geq&  \frac{f(d)^2}{8d^2} \exp\left(  -\frac{f(d)}{2} \right) \nonumber
\end{eqnarray}
where the last inequality is deduced for $d$ sufficiently large from the assumption $f(d)\ll d$ and the fact that $\log(1+x)\sim x$ for $x$ small. The proof of the second statement is similar. \hfill $\Box$

Let $\displaystyle{\widetilde{P_{0}}}=P_{0}/\|P_{0}\|_{2}$ denote the $L^2$-normalization of $P_{0}$, so that $\displaystyle{\widetilde{P_{0}}\in SH_{d}}$. The following result is an immediate corollary of Lemmas \ref{L2estimate} and \ref{pointwiseestimate}.

\begin{corollary} \label{normalizedpointwise}
For $\displaystyle{\|z\|=r_{1}=\sqrt{\frac{f(d)}{2d}}}$ and $d$ sufficiently large,  
\( \displaystyle{\|\widetilde{P_{0}}(z)\|_{FS}^2 \geq \frac{d^{2}}{32}\exp\left(  -\frac{f(d)}{2}\right)}\)
whereas for $\displaystyle{ \|z\|=r_{2}= \sqrt{\frac{3f(d)}{2d}}}$  and $d$ sufficiently large, 
\(\displaystyle{ \|\widetilde{P_{0}}(z)\|_{FS}^2 \geq \frac{d^{2}}{32}\exp\left(  -\frac{3f(d)}{2}\right)\cdot}\) 
\hfill $\Box$
\end{corollary}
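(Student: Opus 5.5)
Corollary \ref{normalizedpointwise} follows by dividing the lower bounds of Lemma \ref{pointwiseestimate} by the value of $\|P_{0}\|_{2}^{2}$ from Lemma \ref{L2estimate}. The pointwise Fubini--Study norm is homogeneous of degree one in the section. Hence, for every $z$,
\[ \|\widetilde{P_{0}}(z)\|_{FS}^2=\frac{\|P_{0}(z)\|_{FS}^2}{\|P_{0}\|_{2}^2}, \]
and it suffices to bound the denominator from above by a quantity of order $f(d)^2/d^4$.

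By Lemma \ref{L2estimate}, $\|P_{0}\|_{2}^2 = 2f(d)^2/d^4 + O(1/d^4)$. Since $f(d)\to\infty$, the error term is $o(f(d)^2/d^4)$. So for $d$ sufficiently large we have $\|P_{0}\|_{2}^2\leq 4f(d)^2/d^4$; any constant strictly larger than $2$ would do. To make this explicit, one can use the exact Pythagorean expression from the proof of Lemma \ref{L2estimate}:
\[ \|P_0\|_2^2=\frac{8}{(d+2)(d+1)d(d-1)}+\frac{f(d)^2}{d^2}\cdot\frac{2}{(d+2)(d+1)}. \]
Here the second term is at most $2f(d)^2/d^4$, and the first is bounded by $f(d)^2/d^4$ once $f(d)^2\geq 16$ and $d$ is large.

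Now take $\|z\|=r_{1}$. Lemma \ref{pointwiseestimate} gives
\[ \|\widetilde{P_{0}}(z)\|_{FS}^2\geq \frac{f(d)^2}{8d^2}e^{-f(d)/2}\cdot\frac{d^4}{4f(d)^2}=\frac{d^2}{32}e^{-f(d)/2}. \]
The case $\|z\|=r_{2}$ is identical, with $e^{-f(d)/2}$ replaced by $e^{-3f(d)/2}$.

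The argument contains no real obstacle. The only point needing care is the choice of the constant in the upper bound $\|P_{0}\|_{2}^2\leq 4f(d)^2/d^4$, so that the factor $1/8$ becomes $1/32$. This uses the hypothesis $f(d)\gg 1$ to absorb the $O(d^{-4})$ term.
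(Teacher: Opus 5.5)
Your argument is correct and is exactly the route the paper intends. The paper states the corollary as an immediate consequence of Lemmas~\ref{L2estimate} and~\ref{pointwiseestimate}: divide the pointwise lower bound by an upper bound $\|P_{0}\|_{2}^{2}\leq 4f(d)^{2}/d^{4}$, valid for large $d$ since $f(d)\to\infty$, and your use of the exact Pythagorean expression for $\|P_{0}\|_{2}^{2}$ just makes explicit the constant-chasing the paper leaves implicit.
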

\begin{remark}
Note that $\widetilde{P_{0}}\in SH_{d}$ has an unusually large $L_{\infty}$-norm $\|\widetilde{P_{0}}\|_{\infty}$ in view of Theorem \ref{thm:alpha}, which for the case of $W_{d}=H_{d}$ shows that the set of polynomials in $SH_{d}$ with their $L_{\infty}$-norms exceeding $\sqrt{\log(d)}$ in order has very small probability.  On the contrary, it can be shown that, in the Hilbert space $H_{d}$, the angle between $\widetilde{P_{0}}$ and a Hörmander peak section centered at the point $[1:0:0]\in \mathbb{RP}^2$  is very small, hence providing a conceptual explanation of why $\|\widetilde{P_{0}}\|_{\infty}$ is large.  
\end{remark}

\begin{theorem} \label{largecomponents}
Let $\epsilon>0$. Then, there exists a positive constant $c$ independent of $d$ such that for $d$ sufficiently large,  the expected number of connected components of a Kostlan random real plane curve which have length at least $\displaystyle{2\pi\sqrt{\frac{\log\left(\epsilon \log(d)/2^{11}\right)}{12d}}}$ is bounded below by $\displaystyle{\frac{cd^{1-\epsilon}}{\log(\log(d))}\cdot}$
\end{theorem}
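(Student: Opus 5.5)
We will apply Corollary \ref{lowerprob} to the reference polynomial $\widetilde{P_{0}}$ of subsection \ref{reference}, translated by isometries to many disjoint locations, and then sum the resulting probabilities. We take $f(d)=\frac{1}{6}\log\left(\epsilon\log d/2^{11}\right)$ (up to adjusting constants) and choose $g(d)$ a fixed multiple of $f(d)$, say $g(d)=4f(d)$. This makes $r_{2}=\sqrt{3f(d)/(2d)}<R(d)/2=\sqrt{g(d)/d}/2$, so the compact set $K=\{\|z\|=r_{1}\}\cup\{\|z\|=r_{2}\}$ lies in $B(0,R(d)/2)$ and the setup of subsection \ref{barrier} applies.

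\emph{Step 1 (lower bound on $m(d)$ and on the probability).} By Corollary \ref{normalizedpointwise}, $\inf_{K}\|\widetilde{P_{0}}(z)\|^2_{FS}\geq \frac{d^2}{32}e^{-3f(d)/2}$. Since $N_{d}\sim d^2/2$, \eqref{md} gives $m(d)^2\leq C\exp\left(3g(d)/4+3f(d)/2\right)=C e^{9f(d)/2}$ with $C$ an absolute constant (something like $2^{10}$). Corollary \ref{lowerprob} then bounds below the probability that $\{F=0\}\cap A_{r_1,r_2}$ is homologically nontrivial in $H_1(A_{r_1,r_2},\mathbb{Z}_2)$ by $\frac{m(d)}{2\sqrt{2\pi}}e^{-2m(d)^2}$. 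Since $m(d)\to\infty$, this is at least $e^{-3m(d)^2}$. With $f$ chosen as above, $3m(d)^2\leq \epsilon\log d$ once the constants $2^{11}$ and $12$ in the statement are matched to those in $C$. This yields a probability of at least $d^{-\epsilon}$. This constant bookkeeping is where the precise form of the length bound comes from, and it is the step I expect to need the most care: in particular, one must choose the relation between $g$ and $f$ and the exponent in $f$ so that both the containment $K\subset B(0,R(d)/2)$ and the final exponent come out as stated.

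\emph{Step 2 (geometric consequence).} On the event of Step 1, $\{F=0\}$ contains a component meeting $A_{r_1,r_2}$ that is nontrivial in $H_1(A)$. By Proposition \ref{homotopy}, $F$ does not vanish on $\partial A$, so this component is a closed curve inside the annulus that winds around the inner disk. Its length is therefore at least $2\pi r_1=2\pi\sqrt{f(d)/(2d)}$, which is the length threshold in the statement after substituting $f$.

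\emph{Step 3 (counting).} The Kostlan distribution is invariant under $O(3)$, which acts transitively on $\mathbb{R}P^2$ by Fubini--Study isometries. We place disjoint annuli, images of $B_{FS}(0,\rho(d))$, centered at points of a maximal $2\rho(d)$-separated set in $\mathbb{R}P^2$. There are $\gtrsim \rho(d)^{-2}\asymp d/g(d)\asymp d/\log\log d$ such annuli. Each annulus carries its own long component with probability at least $d^{-\epsilon}$, and components found in disjoint annuli are distinct. Linearity of expectation then gives an expected number of long components of at least $c\,d^{1-\epsilon}/\log\log d$.
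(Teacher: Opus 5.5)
Your route is the paper's: the reference polynomial $\widetilde{P_{0}}$, Corollary \ref{lowerprob} for a probability of at least $d^{-\epsilon}$ of a nontrivial class in $H_{1}(A_{r_{1},r_{2}},\mathbb{Z}_{2})$, the inner circle for the length bound $2\pi r_{1}$, and packing plus linearity of expectation. However, Step 1 rests on a false containment. With $g(d)=4f(d)$ you get
\[ R(d)/2=\tfrac12\sqrt{4f(d)/d}=\sqrt{f(d)/d}, \]
which is \emph{smaller} than $r_{2}=\sqrt{3f(d)/(2d)}$. So the outer boundary circle of the annulus is not in $B(0,R(d)/2)$.

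Proposition \ref{localsupbound} then fails on that circle. Its derivation needs $\|z\|\le R(d)/2$ in two places: to have $B(z,R(d)/2)\subset B(0,R(d))$, and to bound $\|t\|^{2}-\|z\|^{2}\le 3R(d)^{2}/4$. A symptom of the error is that your bound $m(d)^{2}\le Ce^{9f(d)/2}$ is too good. With your $f$ it equals $O((\log d)^{3/4})$, which would yield $d^{-\epsilon'}$ for every $\epsilon'>0$.

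The fix, which is the paper's choice, is $g(d)=6f(d)$. This makes $r_{2}=R(d)/2$ exactly, which suffices because the mean-value argument only uses $\|z\|\le R(d)/2$. Now $e^{3g/4}=e^{9f/2}$ combines with the $e^{3f/2}$ from Corollary \ref{normalizedpointwise} to give
\[ m(d)^{2}\le 2^{9}e^{6f(d)}(1+o(1))\le 2^{10}e^{6f(d)}. \]
With $f(d)=\frac16\log(\epsilon\log d/2^{11})$ this gives $2m(d)^{2}\le 2^{11}e^{6f(d)}=\epsilon\log d$. Since $m(d)\ge 2\sqrt{2\pi}$ for large $d$, the probability is at least $e^{-2m(d)^{2}}\ge d^{-\epsilon}$. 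This is exactly where the constants $2^{11}$ and $12=2\cdot 6$ in the statement come from.

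The slack your version seemed to have is now gone, so constant factors matter. With $C=2^{10}$, your weakening to $e^{-3m(d)^{2}}$ only gives $d^{-3\epsilon/2}$. Either keep $e^{-2m(d)^{2}}$ as the paper does, or track $C$ more precisely (it is $2^{9}(1+o(1))$). With these corrections, your Steps 2 and 3 go through as written.
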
 

\noindent\textit{Proof:} Let us set $\displaystyle{f(d)=\frac{1}{6}\log(\epsilon \log(d)/2^{11})}$, and using the notation of subsection \ref{meanvalue}, let $g(d)=6f(d)$ (hence, $\displaystyle{R(d)=\sqrt{6f(d)/d}}$) and let $K=\partial(A_{r_{1},r_{2}})$. Set the reference polynomial to be $\widetilde{P}_{0}(z)$ above, which is $L^2$-normalized. We now estimate $m(d)$ defined by \eqref{md}. 

\begin{eqnarray*} 
m(d)^2&=&  \frac{2^4  \exp\left(3g(d)/4\right) \left(1+g(d)/d\right)^3 }{\inf_{z\in K}  \|\widetilde{P}_{0}(z)\|^2_{FS} }2N_{d}    \\
&\leq&  \frac{2^4  \exp\left(9f(d)/2\right) \left(1+6f(d)/d\right)^3 }{(d^{2}/32)\exp\left(  -3f(d)/2\right) }(d+2)(d+1) \, \mathrm{for} \, d \, \mathrm{large},\, \mathrm{by} \, \mathrm{Corollary} \, \ref{normalizedpointwise}       \\
&\leq& 2^{10}\exp(6f(d)) \, \mathrm{for} \, d \, \mathrm{large},\, \mathrm{since} \, f(d)\ll d. 
\end{eqnarray*}
\noindent On the other hand, by using Lemma \ref{L2estimate} and equation \eqref{infbound} it is easy to see that $m(d)$ grows to infinity with $d$. 

Let $F$ be a Kostlan random real polynomial of degree $d$. Let $p$ denote the probability that $\{\widetilde{P}_{0}=0\}\cap A_{r_{1},r_{2}}$ and $\{F=0\}\cap A_{r_{1},r_{2}}$ represent the same class in $H_{1}(A_{r_{1}, r_{2}}, \mathbb{Z}_{2})\cong \mathbb{Z}_{2}$. The first of these classes is the non-trivial one, hence the probability that the other one is also non-trivial is at least 
\begin{eqnarray*} 
p&\geq& \frac{m(d)}{2\sqrt{2\pi}} e^{-2m(d)^2}\\
&\geq&e^{-2^{11}\exp(6f(d))} \\
&\geq& d^{-\epsilon}. 
\end{eqnarray*} 

\noindent for $d$ sufficiently large, by Corollary \ref{lowerprob}  and the bounds obtained for $m(d)$ above. Since the shortest curve in $A_{r_{1},r_{2}}$ realizing the non-trivial homology class in $H_{1}(A_{r_{1}, r_{2}}, \mathbb{Z}_{2})$ is the inner circle boundary, which has radius $\displaystyle{r_{1}=\sqrt{\frac{f(d)}{2d}}}$, we deduce that with probability at least $d^{-\epsilon}$, there exists a component of the curve in $A_{r_{1},r_{2}}$ with length at least $\displaystyle{2\pi r_{1}= 2\pi\sqrt{\frac{\log\left(\epsilon \log(d)/2^{11}\right)}{12d}}\cdot}$ 

In order to establish the statement about the expected number of such components, let us pack disjoint balls of Fubini-Study radius $\arctan(R(d))$ inside $\mathbb{R}P^2$. Since for $x$ small $\tan(x)\sim x$, we can use $R(d)$ itself for the asymptotics instead. We can pack altogether $\displaystyle{O\left( \frac{1}{R(d)^2}\right)=O\left(\frac{d}{\log(\log(d))}\right)}$ such balls. The expected number of the aforementioned components in one ball is at least $d^{-\epsilon}$ by above and the final result follows by the linearity of expectation. \hfill $\Box$

\subsection{Nests with depth at least $O(\log(\log(d)))$}  \label{Chebyshevimprove}
The goal of this subsection is to use the properties of Chebyshev polynomials recalled in subsection \ref{subsectionChebyshev} in order to obtain a new reference polynomial, which will then allow us to prove a lower bound on the expected number of nests of a  Kostlan random real algebraic plane curves having depth $O(\log(\log(d)))$, which tends to infinity as $d$ grows. 

Let $f:\mathbb{N^{+}}\rightarrow \mathbb{R}_{\geq 0}$ be a monotone increasing function such that $1 \ll f(d)\ll d$ for $d$ large, as in subsection \ref{reference}, and let $\alpha\in (0,1)$ be a constant. Let $N=\alpha \lfloor f(d) \rfloor$ or $N=\alpha \lfloor f(d) \rfloor-1$ (where $\lfloor \cdot \rfloor$ denotes the floor function), where the choice is made so that $N$ becomes an even integer. Set 
\[ p_{N}(x)=T_{N}\left(\frac{d}{f(d)} x\right) = \sum_{j=0}^{N} a_{j,N} \left( \frac{d}{f(d)}\right)^{j} x^{j}. \]
By Proposition \ref{roots}, the polynomial $p_{N}(x)$ has all its roots in the interval $\displaystyle{ \left[ -\frac{f(d)}{d}, \frac{f(d)}{d} \right]}$.  In fact, these roots are located at $\displaystyle{ x=\frac{f(d)}{d} \cos\left(\frac{(2k+1)\pi}{2N}\right)}$ for $ k\in \{0,1,\ldots,N-1\}$. The extremal values of $p_{N}(x)$ in this interval are $\pm 1$ and these are located at $\displaystyle{ x=\frac{f(d)}{d} \cos\left(\frac{k\pi}{N}\right)}$ for  $ k\in \{0,1,\ldots, N\}$. 

Let us define the reference polynomial $P_{1}(X_{0},X_{1},X_{2})$ by substituting $\displaystyle{\frac{X_{1}^2}{X_{0}^2}+\frac{X_{2}^2}{X_{0}^2}}$ for $x$ in $p_{N}(x)$ and then homogenizing to degree $d$. More precisely, 
\begin{eqnarray} 
P_{1}(X_{0},X_{1}, X_{2})&=& X_{0}^{d-2N} \sum_{j=0}^{N} a_{j,N} \left( \frac{d}{f(d)} \right)^j (X_{1}^2+X_{2}^2)^{j} X_{0}^{2N-2j}    \\
&=& \sum_{j=0}^{N} \sum_{\ell=0}^{j} a_{j,N} \left( \frac{d}{f(d)} \right)^j   {j \choose \ell}  X_{1}^{2\ell} X_{2}^{2j-2\ell} X_{0}^{d-2j}  \label{P1}
\end{eqnarray} 

\noindent The zero locus of $P_{1}$ is then the union of a $(d-2N)$-fold line at infinity and its zero locus in $\mathbb{R}P^2\cap U_{0}$. By construction, the latter is a union of $N/2$ circles, all centered at $[1:0:0]$ and having Euclidean radii $\displaystyle{R_{k}=\sqrt{ \frac{f(d)}{d} \cos\left(\frac{(2k+1)\pi}{2N}\right)}}$ for $ \displaystyle{k\in \left\{0,1,\ldots,\frac{N}{2}-1\right\} } $. The extremal values of $P_{1}$ which lie within a radius of $\displaystyle{\sqrt{\frac{f(d)}{d}}}$ of the center $[1:0:0]$ alternate between $+1$ and $-1$ and they occur on circles of radii  $\displaystyle{r_{k}= \sqrt{\frac{f(d)}{d} \cos\left(\frac{k\pi}{N}\right)}}$ for  $\displaystyle{ k\in \left\{0,1,\ldots, \frac{N}{2}\right\}}$ (the last of these is a degenerate circle of radius $0$). 

\begin{lemma} \label{P1L2}
For $d$ sufficiently large, the $L^2$-norm of $P_{1}$ satisfies the inequality 
\[  \|P_{1}\|_{2}^2 \leq   \frac{f(d)^2\cdot  2^{10f(d)}}{d^2}  \cdot   \] 
\end{lemma}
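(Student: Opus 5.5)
The plan is to compute $\|P_{1}\|_{2}^2$ directly from the expansion \eqref{P1}, using the orthogonality of monomials from Lemma \ref{L2norms}. Distinct pairs $(j,\ell)$ give distinct monomials $X_{0}^{d-2j}X_{1}^{2\ell}X_{2}^{2j-2\ell}$, so by the Pythagorean theorem
\[
\|P_{1}\|_{2}^2=\sum_{j=0}^{N}\sum_{\ell=0}^{j} a_{j,N}^2\left(\frac{d}{f(d)}\right)^{2j}{j \choose \ell}^2 \frac{(2\ell)!\,(2j-2\ell)!\,(d-2j)!\,2}{(d+2)!}.
\]

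The next step is to estimate each term with Lemma \ref{coefficient}, which gives $|a_{j,N}|\le 2^j{N\choose j}\le 2^{2N}$ (indeed $|a_{j,N}|^2\le 2^{2j}{N\choose j}^2\le 2^{4N}$). For the monomial norms I will use ${j\choose \ell}^2(2\ell)!(2j-2\ell)!\le {j\choose\ell}^2 (2j)!\le 4^j\,(2j)!$, together with
\[
\frac{(d-2j)!}{(d+2)!}=\frac{1}{(d+2)(d+1)}\prod_{i=0}^{2j-1}\frac{1}{d-i}\le \frac{2}{d^2}\left(\frac{1}{d-2N}\right)^{2j}.
\]
Since $N\le f(d)\ll d$, we have $(d/(d-2N))^{2j}\le (1+O(f/d))^{2f}=1+o(1)$. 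The $j$-th block is therefore bounded by roughly
\[
\frac{4}{d^2}\,(j+1)\,2^{2j}{N\choose j}^2 4^j\frac{(2j)!}{f(d)^{2j}}.
\]
Using $(2j)!\le (2j)^{2j}\le (2N)^{2j}\le (2\alpha f(d))^{2j}$, the factor $(2j)!/f(d)^{2j}$ is at most $(2\alpha)^{2j}\le 4^{j}$. This is the crucial cancellation: the scaling $(d/f)^{2j}$ in the coefficients is exactly compensated by the smallness $\sim (2j)!/d^{2j}$ of the higher monomial norms near $[1:0:0]$. The same mechanism gives $\|P_0\|_2^2\sim f(d)^2/d^4$ in Lemma \ref{L2estimate}.

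Summing, $\sum_j {N\choose j}^2\le 4^N$, and the remaining factors are $2^{2j}\cdot 4^j\cdot 4^j\le 2^{6N}$ together with $(j+1)\le N+1$. This yields $\|P_1\|_2^2\le C (N+1) 2^{8N}/d^2 \le C' f(d)\,2^{8\alpha f(d)}/d^2$, which is dominated by $f(d)^2 2^{10 f(d)}/d^2$ for $d$ large since $\alpha<1$ and $f(d)\to\infty$. The main obstacle is this bookkeeping step: the factorial $(2j)!$ has to be absorbed by $f(d)^{2j}$ without losing more than an exponential in $f(d)$. I would first try the crude bound $(2j)!\le(2N)^{2j}$ as above, and fall back on Stirling if the constants in the exponent $10$ turned out to be tight. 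The $j=0$ term, $2/((d+1)(d+2))$, is harmless.
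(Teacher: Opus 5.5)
Your proposal is correct and follows essentially the same route as the paper. Both arguments expand $P_1$ in monomials, use the orthogonality of Lemma \ref{L2norms} and the coefficient bound of Lemma \ref{coefficient}, absorb the factorials from the monomial norms into $f(d)^{2j}$, and bound $d^{2j}(d-2j)!/(d+2)!$ by a harmless factor; the only difference is that the paper absorbs $(j!)^2$ and bounds the central binomial coefficients by $2^{2N}$, whereas you absorb $(2j)!$ directly.

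One small correction. The factor $(d/(d-2N))^{2j}$ is $\exp(O(f(d)^2/d))$, so it is $1+o(1)$ only when $f(d)^2\ll d$. Under the mere assumption $f(d)\ll d$ it is still $\exp(o(f(d)))$ (or simply at most $2^N$, as the paper uses), and the slack between your $2^{8\alpha f(d)}$ and the target $2^{10 f(d)}$ absorbs it easily.
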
 

\noindent \textit{Proof:} By using equation \eqref{P1} and Lemma \ref{L2norms} we get 
\[  \|P_{1}\|_{2}^2 = \sum_{j=0}^{N} \sum_{\ell=0}^{j}   |a_{j,N}|^2  \frac{d^{2j}}{(f(d))^{2j}} \frac{ (j!)^2}{(\ell !)^2((j-\ell)!)^2} \frac{(d-2j)! (2\ell)! (2j-2\ell)! 2}{(d+2)!}\cdot  \] 
We use the upper estimate $\displaystyle{|a_{j,N}|\leq 2^{j}{N\choose j}}$ given by Lemma \ref{coefficient}. Since $\alpha<1$, we get $N\leq \alpha\lfloor f(d)\rfloor < f(d)$. Hence $\displaystyle{  \frac{ (j!)^2}{(f(d))^{2j}}\leq 1}$. Also, for $d$ sufficiently large, $\displaystyle{ \frac{d^{2j}(d-2j)!}{(d+2)!}\leq \frac{2^N}{(d+2)(d+1)}}$ since $j\leq N<f(d)\ll d$.  We deduce that 
\[  \|P_{1}\|_{2}^{2} \leq \sum_{j=0}^{N} \sum_{\ell=0}^{j}   \frac{2^{2j+N+1}}{(d+2)(d+1)} {N \choose j}^2 {2\ell \choose \ell} {2j-2\ell \choose j-\ell}\cdot  \]
By using the inequalities $\displaystyle{2^{2j}\leq 2^{2N}}$,  $\displaystyle{ {2\ell \choose \ell}\leq 2^{2N}}$, $\displaystyle{ {2j-2\ell \choose j-\ell}\leq 2^{2N}}$ and $\displaystyle{ {N \choose j}\leq 2^N}$ we obtain 
\begin{eqnarray*} 
\|P_{1}\|_{2}^2 &\leq&    \sum_{j=0}^{N} \sum_{\ell=0}^{j}   \frac{2^{9N+1}}{(d+2)(d+1)}   \\
&\leq&   \frac{N^2\cdot  2^{9N+1}}{(d+2)(d+1)}  \\
&\leq&  \frac{f(d)^2\cdot  2^{10f(d)}}{d^2} \cdot \\
\end{eqnarray*}
\noindent The result follows. \hfill $\Box$ 

Let $\displaystyle{\widetilde{P}_{1}=P_{1}/\|P_{1}\|_{2}}$ denote the $L^2$-normalization of $P_{1}$, so that $\widetilde{P}_{1}\in SH_{d}$.  

\begin{corollary} \label{P1FS}
Let $\displaystyle{r_{k}= \sqrt{\frac{f(d)}{d} \cos\left(\frac{k\pi}{N}\right)}}$ for  $\displaystyle{ k\in \left\{0,1,\ldots, \frac{N}{2}\right\}}$. Then for every such $k$, for $z\in \mathbb{R}P^2\cap U_{0}$ such that $\|z\|=r_{k}$, and for $d$ sufficiently large, 
\[  \|\widetilde{P}_{1}(z)\|_{FS}^2\geq \frac{d^2}{(f(d))^2}\exp(-9f(d))\cdot \] 
\end{corollary}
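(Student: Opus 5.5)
On the circle $\|z\|=r_k$ I will compute $\|P_{1}(z)\|_{FS}^2$ exactly from \eqref{pointwise}, and then divide by the upper bound for $\|P_1\|_2^2$ from Lemma \ref{P1L2}. Taking $X_0=1$, the dehomogenized polynomial is $P_1(1,z_1,z_2)=p_N(\|z\|^2)=T_N\!\left(\frac{d}{f(d)}\|z\|^2\right)$. On $\|z\|=r_k$ we have $\frac{d}{f(d)}r_k^2=\cos(k\pi/N)$, so by Proposition \ref{roots}, $|P_1(1,z)|=|\cos(k\pi)|=1$. Hence
\[ \|P_1(z)\|_{FS}^2=\frac{1}{(1+r_k^2)^d}\geq \frac{1}{(1+f(d)/d)^d}\geq \exp(-f(d)), \]
using $r_k^2\le f(d)/d$ and $\log(1+x)\le x$.

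Next I normalize. Lemma \ref{P1L2} gives
\[ \|\widetilde{P}_1(z)\|_{FS}^2=\frac{\|P_1(z)\|_{FS}^2}{\|P_1\|_2^2}\geq \frac{d^2}{f(d)^2}\,2^{-10f(d)}\exp(-f(d)). \]
It remains to compare $2^{-10 f(d)}e^{-f(d)}$ with $e^{-9f(d)}$. We have $2^{-10f(d)}=e^{-10\log 2\, f(d)}$ and $10\log 2\approx 6.93$. So the total exponent is about $-7.93 f(d)\ge -9f(d)$, and the claimed inequality follows for $d$ large, where $f(d)\gg 1$ is needed only to apply Lemma \ref{P1L2}.

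The argument is essentially immediate. The only points needing care are the following:
\begin{itemize}
\item the Chebyshev extremal values are attained exactly on the circles $\|z\|=r_k$, so that $|P_1(1,z)|=1$ there, including $k=N/2$, where $r_k=0$ and $T_N(0)=\pm1$ since $N$ is even;
\item the numerical constant $10\log 2+1<9$ holds.
\end{itemize}
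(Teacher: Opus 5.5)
Your proof is correct and follows the same route as the paper: use $|P_1(1,z)|=1$ on the extremal circles, bound $(1+r_k^2)^{-d}\geq e^{-f(d)}$, divide by the bound of Lemma \ref{P1L2}, and compare the constants. Two small differences are improvements: you state explicitly the fact $|P_1(1,z)|=1$, which the paper uses silently, and you use the exact inequality $\log(1+x)\leq x$ where the paper uses an asymptotic step that costs an extra factor $2^{f(d)}$.
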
 

\noindent\textit{Proof:} 
\begin{eqnarray*} 
\|\widetilde{P}_{1}(z)\|_{FS}^2 &=& \frac{\|P_{1}(z)\|^2_{FS}}{\|P_{1}\|^2_{2}}.   \\
&=& \frac{ |P_{1}(1,z_{1},z_{2})|^2}{\|P_{1}\|_{2}^2 (1+\|z\|^2)^d} \quad \mathrm{by} \, \eqref{pointwise} \\
&\geq& \frac{d^2}{f(d)^2 2^{10f(d)} \left(1+ \frac{f(d)}{d} \cos\left(\frac{k\pi}{N}\right) \right)^d} \, \mathrm{by}\, \mathrm{Lemma} \, \ref{P1L2} \\ 
&\geq& \frac{d^2}{f(d)^2 2^{10f(d)} \left(1+ \frac{f(d)}{d}\right)^d}  \\
&=& \frac{d^2}{f(d)^2 2^{10f(d)}} \exp\left(-d\log\left(1+\frac{f(d)}{d}\right)\right) \\
&\geq& \frac{d^2}{f(d)^2 2^{11f(d)}}\exp(-f(d)) \quad \mathrm{since} \, f(d)\ll d \,\mathrm{and}\, \log\left(1+\frac{f(d)}{d}\right) \sim f(d) \\
&\geq& \frac{d^2}{f(d)^2}\exp(-9f(d))\cdot
\end{eqnarray*}
\hfill $\Box$ 

\begin{theorem} \label{expecteddepth}
Let $\epsilon>0$. Then, there exists a positive constant $c$ independent of $d$ such that for $d$ sufficiently large,  the expected number of nests of a Kostlan random real plane curve which have depth at least $\displaystyle{\frac{1}{30}\log(\epsilon \log(d)/2)}$ is bounded below by $\displaystyle{\frac{cd^{1-\epsilon}}{\log(\log(d))}\cdot}$
\end{theorem}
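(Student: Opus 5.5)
The plan mirrors the proof of Theorem \ref{largecomponents}, with the reference polynomial $\widetilde{P}_{1}$ in place of $\widetilde{P}_{0}$. Since the annulus argument must be applied to many annuli at once, we will run the barrier argument of Corollary \ref{lowerprob} with $K$ equal to the union of all the circles $\{\|z\|=r_{k}\}$, $k\in\{0,1,\ldots,N/2-1\}$. These circles are the level sets on which $P_{1}$ takes its extremal values $\pm1$. Consecutive circles $r_{k+1}<r_{k}$ bound annuli $A_{r_{k+1},r_{k}}$, each of which contains exactly one zero circle of $P_{1}$ of radius $R_{k}$. Hence $\{P_{1}=0\}\cap A_{r_{k+1},r_{k}}$ is the nontrivial class in $H_{1}(A_{r_{k+1},r_{k}},\mathbb{Z}_{2})$.

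We choose $f(d)$ to be a small multiple of $\log(\epsilon\log d/2)$, say $f(d)=\frac{1}{12}\log(\epsilon\log(d)/2)$ (the constant is to be tuned), and $g(d)$ a fixed multiple of $f(d)$. The multiple is chosen so that $R(d)/2=\sqrt{g(d)/(4d)}$ exceeds $r_{0}=\sqrt{f(d)/d}$, which puts $K$ inside $B(0,R(d)/2)$; taking $g(d)=5f(d)$ suffices. We then take $N=\alpha\lfloor f(d)\rfloor$ (or one less) with, for instance, $\alpha=1/2$.

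By Corollary \ref{P1FS}, $\inf_{K}\|\widetilde{P}_{1}\|^2_{FS}\geq \frac{d^2}{f(d)^2}e^{-9f(d)}$. Substituting into \eqref{md} with $N_{d}\sim d^2/2$ gives
\[ m(d)^2\lesssim f(d)^2\exp\left(\tfrac{3g(d)}{4}+9f(d)\right)\leq \exp(Cf(d)) \]
for an explicit constant $C$. Since every $z\in K$ satisfies $\|\widetilde{P}_{1}(z)\|_{FS}\ge\|Q(z)\|_{FS}/a(d)$ whenever \eqref{abound} holds, Propositions \ref{aboundprop} and \ref{homotopy} apply simultaneously to every annulus $A_{r_{k+1},r_{k}}$. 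Corollary \ref{lowerprob} then shows that, with probability at least $\frac{m(d)}{2\sqrt{2\pi}}e^{-2m(d)^2}\geq \exp(-3e^{Cf(d)})$, the Kostlan random polynomial $F$ has nontrivial $\mathbb{Z}_2$-class in each of the $N/2-1$ annuli. With $f(d)$ a sufficiently small multiple of $\log(\epsilon\log d/2)$, this probability is at least $d^{-\epsilon}$. We also need $m(d)\to\infty$, or at least $m(d)$ bounded below, so that the factor $m(d)$ is harmless; this we check as in Theorem \ref{largecomponents}. The dependence among the constants (the $9f(d)$ from Corollary \ref{P1FS}, the $3g/4$ term, and the final $\frac{1}{30}$) is the bookkeeping we expect to have to tune.

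Next we convert the homology information into a nest. A nontrivial class in $H_{1}(A_{r_{k+1},r_{k}},\mathbb{Z}_2)$ means that some component of $\{F=0\}\cap A_{r_{k+1},r_{k}}$ meets a ray from the origin an odd number of times. For a smooth curve this forces a closed component that is homologically nontrivial in the annulus, hence an oval of $\{F=0\}$ separating the two boundary circles. We will use that $F$ is almost surely smooth, and that a component of the real curve lying inside a disc in the affine chart is an oval, since it is contained in the disc $\{\|z\|<r_{0}\}$. These ovals lie in disjoint annuli, each surrounding the next, so they form a nest of depth at least $N/2-1\geq \frac{1}{30}\log(\epsilon\log(d)/2)$ for $d$ large. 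This topological step, namely extracting an honest separating oval from the $\mathbb{Z}_2$-class, is the point we expect to need the most care: we must argue that an arc with odd ray-intersection parity cannot be a non-closed piece. We will do this using that the component is compact and disjoint from $\partial A$.

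Finally, as in Theorem \ref{largecomponents}, we use unitary (rotation) invariance of the Kostlan distribution to move the construction to any center in $\mathbb{R}P^2$. We pack $O(R(d)^{-2})=O(d/\log\log d)$ disjoint Fubini--Study balls of radius $\arctan R(d)$ into $\mathbb{R}P^2$. Each ball contains a nest of the required depth with probability at least $d^{-\epsilon}$, and the nests in distinct balls are distinct. Linearity of expectation then gives the lower bound $\frac{cd^{1-\epsilon}}{\log\log d}$.
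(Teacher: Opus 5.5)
Your proposal is correct and follows the paper's proof essentially step for step: reference polynomial $\widetilde{P}_1$, $K$ the union of the extremal circles $\{\|z\|=r_k\}$, Corollary \ref{lowerprob} applied to all annuli $A_{r_{k+1},r_k}$ simultaneously, then packing $O(d/\log\log d)$ balls by unitary invariance; your explicit extraction of a separating oval from the nontrivial $\mathbb{Z}_2$-class, via almost-sure smoothness of $\{F=0\}$ and $\{F=0\}\cap\partial A_{r_{k+1},r_k}=\emptyset$, usefully fills in a step the paper only asserts. The one caution is that the constants are tight rather than freely tunable: if $m(d)^2\le e^{Cf(d)}$ you need $f(d)\le\frac{1}{C}\log(\epsilon\log(d)/2)$, while depth $N/2\approx\alpha f(d)/2\ge\frac{1}{30}\log(\epsilon\log(d)/2)$ needs $\alpha>C/15$, and Corollary \ref{P1FS} together with $g\ge 4f$ forces $C>12$ (your $g=5f$ gives $C=13$, the paper's value), so you must take $f(d)=\frac{1}{13}\log(\epsilon\log(d)/2)$ and $\alpha\in(\frac{13}{15},1)$; your sample choices $\frac{1}{12}$ and $\alpha=\frac{1}{2}$ both fail, and shrinking $f$ further is not an option.
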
 

\noindent \textit{Proof:}  Let us set $\displaystyle{f(d)=\frac{1}{13}\log(\epsilon \log(d)/2)}$, and with the notation of subsection \ref{meanvalue}, let $g(d)= 4f(d)$ (hence, $\displaystyle{R(d)=2r_{0}=\sqrt{\frac{4f(d)}{d}}}$). Let $\displaystyle{K= \bigcup_{k=0}^{N/2} \{\|z\|=r_{k} \}}$, a union of $\displaystyle{\frac{N}{2}+1}$ circles all centered at $[1:0:0]$ where $\displaystyle{r_{k}= \sqrt{\frac{f(d)}{d} \cos\left(\frac{k\pi}{N}\right)}}$ for  $\displaystyle{ k\in \left\{0,1,\ldots, \frac{N}{2}\right\}}$ as above. Note that $r_{0}>r_{1}>\ldots>r_{N/2}=0$. Set the reference polynomial to be $\widetilde{P}_{1}(z)$ above, which is $L^2$-normalized. We next estimate $m(d)$ defined by \eqref{md}. 

\begin{eqnarray*} 
m(d)^2&=&  \frac{2^4  \exp\left(3g(d)/4\right) \left(1+g(d)/d\right)^3 }{\inf_{z\in K}  \|\widetilde{P}_{1}(z)\|^2_{FS} }2N_{d}    \\
&\leq&  \frac{2^4  \exp\left(3f(d)\right) \left(1+4f(d)/d\right)^3 }{ (d^2/f(d)^2)\exp(-9f(d)) }(d+2)(d+1) \, \mathrm{for} \, d \, \mathrm{large},\, \mathrm{by} \, \mathrm{Corollary} \, \ref{P1FS}       \\
&\leq& \exp(13f(d)) \quad \mathrm{for} \, d \, \mathrm{large},\, \mathrm{since} \, 1\ll f(d)\ll d. 
\end{eqnarray*}
\noindent On the other hand, by using Lemma \ref{L2estimate} and equation \eqref{infbound} one sees that $m(d)$ grows to infinity with $d$. 

Let $F$ be a Kostlan random real polynomial of degree $d$. Let $q$ denote the probability that $\{\widetilde{P}_{1}=0\}\cap A_{r_{k+1},r_{k}}$ and $\{F=0\}\cap A_{r_{k+1},r_{k}}$ represent the same class in $H_{1}(A_{r_{k+1}, r_{k}}, \mathbb{Z}_{2})\cong \mathbb{Z}_{2}$ for every $\displaystyle{k\in\left\{0,1,\ldots,\frac{N}{2}-1\right\}}$. For every value of $k$, the first of these classes is the non-trivial one. Corollary \ref{lowerprob} can be applied to all $A_{r_{k+1},r_{k}}$ simultaneously since all of the boundary circles $\{\|z\|=r_{k}\}$ belong to $K$ and a common reference polynomial $\widetilde{P}_{1}$ is used for all of them. Hence, for sufficiently large values of $d$, the probability $q$ can be lower estimated as below. 
\begin{eqnarray*} 
q&\geq& \frac{m(d)}{2\sqrt{2\pi}} e^{-2m(d)^2}\\
&\geq&e^{-2\exp(13f(d))} \\
&\geq& d^{-\epsilon}. 
\end{eqnarray*} 
The homology invariance condition above ensures that the curve $\{F=0\}$ has a nest of depth at least  $\displaystyle{N/2\geq (\alpha \lfloor f(d) \rfloor-1)/2}$ inside $B(0,R(d))$. By a suitable choice of $\alpha\in (0,1)$, we can ensure that this depth is at least $\displaystyle{\frac{1}{30}\log(\epsilon \log(d)/2)}$ for $d$ sufficiently large. Hence the expected number of nests of depth at least $\displaystyle{\frac{1}{30}\log(\epsilon \log(d)/2)}$ inside $B(0,R(d))$ is greater than or equal to $d^{-\epsilon}$. Finally, we can pack
 $\displaystyle{O\left( \frac{1}{R(d)^2}\right)=O\left(\frac{d}{\log(\log(d))}\right)}$ disjoint balls of Fubini-Study radius $\arctan(R(d))$ inside $\mathbb{R}P^2$, and use the linearity of expectation to obtain the result. \hfill $\Box$

This result tells us that one should expect to find nests of depth at least $O\left(\log(\log (d))\right)$ in abundance in a Kostlan random degree $d$ curve. A natural question would be to find upper estimates for expected depths of random nests. Let us define a slightly different random variable: For $p\in \mathbb{R}P^{2}$, let $D_{p,d}$ the {\it expected depth} of the point $p$, which is the integer valued random variable whose value on a Kostlan random curve $C$ of degree $d$ is the depth of the largest nest of $C$ which contains the point $p$ on the interior of its innermost oval. We previously noted the deterministic upper-bound $\displaystyle{ D_{p,d}\leq d/2}$. We prove the following quick-and-easy upper bound for the expectation of $D_{p,d}$. 

\begin{proposition} 
For every $d>0$ and every $p\in \mathbb{R}P^2$,  $\displaystyle{\mathbb{E}(D_{p,d})\leq \frac{\sqrt{d}}{2}\cdot}$
\end{proposition}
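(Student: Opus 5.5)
Fix $p\in\mathbb{R}P^2$ and a real projective line $\ell$ through $p$, and compare the depth of a nest around $p$ with the number of real zeros of the random curve on $\ell$, as in the Bezout argument recalled in the introduction. Suppose $C=\{F=0\}$ has a nest of depth $D$ with $p$ inside its innermost oval. Each oval bounds a disk in $\mathbb{R}P^2$ containing $p$, so $\ell$ meets the interior of every oval of the nest. Since $\ell$ is a projective line, it is a one-sided closed curve and cannot lie inside a disk. Hence $\ell$ leaves the disk bounded by each oval, and it must cross that oval at least twice (an even number of times, counted with multiplicity). The ovals are disjoint, so these intersection points are distinct, and we obtain the deterministic inequality $D_{p,d}(C)\leq \frac{1}{2}\#(\ell\cap C)$, where $\#$ denotes the number of real zeros of $F|_{\ell}$. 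The inequality holds for almost every curve $C$, namely those with $F|_\ell\not\equiv 0$.

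Taking expectations gives $\mathbb{E}(D_{p,d})\leq \frac12\mathbb{E}\#(\ell\cap\{F=0\})$. By unitary, indeed orthogonal, invariance of the Kostlan distribution, we may take $\ell=\{X_2=0\}$. The restriction $F(X_0,X_1,0)$ has coefficients that are independent centered Gaussians with variances proportional to $\binom{d}{i}$. This holds because, by Lemma \ref{L2norms}, $\|X_0^{i_0}X_1^{i_1}\|_2^{-2}$ is proportional to the binomial coefficient $\binom{d}{i_0}$ up to a factor depending only on $d$. The restriction is therefore a Kostlan random binary form of degree $d$, and the classical result of Kostlan and of Edelman--Kostlan gives exactly $\sqrt{d}$ expected real zeros on $\mathbb{R}P^1$. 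This yields $\mathbb{E}(D_{p,d})\leq \sqrt d/2$.

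The main point requiring care is the first step: we need at least two intersection points per oval, not just one, and multiplicities must be handled. Tangencies or non-transverse intersections occur only on a measure-zero set of $F$ for the fixed line $\ell$, since the discriminant of $F|_\ell$ vanishes there. Outside that set all zeros of $F|_\ell$ are simple, each oval contributes at least two of them, and the counts for different ovals do not overlap. The Edelman--Kostlan identity itself can be quoted, or rederived from the Kac--Rice formula applied to the Kostlan binary form.
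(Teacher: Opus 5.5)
Your proposal is correct and follows the same route as the paper. You restrict to a line $\ell$ through $p$, note that each oval of a nest around $p$ meets $\ell$ in at least two points (distinct ovals being disjoint), and apply the Edelman--Kostlan count of $\sqrt{d}$ expected real zeros to the Kostlan restriction $F|_{\ell}$, adding details the paper leaves implicit: why $\ell$ must leave each disk (it is non-contractible) and why $F|_{\ell}$ is a Kostlan binary form (Lemma \ref{L2norms}).
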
 

\noindent \textit{Proof:} By unitary invariance of the Kostlan measure, the result is independent of the choice of the point $p\in \mathbb{R}P^2$. Fix a line $\ell$ passing through $p$. The restriction of a Kostlan random polynomial to $\ell$ will be a univariate Kostlan random polynomial. Any oval that contains $p$ on its interior must intersect $\ell$ in at least two points. Hence, $\mathbb{E}(D_{p,d})$ is bounded above by half of the expected number of real roots of a Kostlan random univariate polynomial. It is a well-known result that the latter is equal to $\sqrt{d}$ for every $d$ (see, for instance, \cite{EK}). The proof then follows. \hfill $\Box$

\subsection{Probability of points to remain in different connected components of the complement of a curve}
Let $m$ be a fixed positive integer and $\epsilon>0$. Let $\delta(d):=d^{-\frac{1}{2}+\epsilon}$. Suppose that $p_{1},\ldots,p_{m}$ are $m$ points in $\mathbb{R}P^2$ such that $d_{FS}(p_{i},p_{j})\geq 2\delta(d)$ for all $i\neq j$. Let $\psi_{i}$ be a unitary transformation taking $\mathbb{R}P^2$ to itself and such that $\psi_{i}([1:0:0])=p_{i}$ for $i\in \{1,\ldots,m\}$. We set 
\[ P(X_{0},X_{1},X_{2})=X_{0}^{d-2}\left(X_{1}^2+X_{2}^2-\frac{1}{d}X_{0}^2\right) \] 
and 
\[ P_{2}(X_{0},X_{1},X_{2})=\sum_{i=1}^{m} (P\circ \psi_{i})(X_{0},X_{1},X_{2}). \] 

\begin{lemma}\label{manyestimates}
\begin{enumerate}
\item $\displaystyle{ \|P\|_{2}^2=\frac{10}{d^4}+O\left(\frac{1}{d^5}\right).}$ 
\item For $z\in U\cap \mathbb{R}P^2$ and $i\neq j$, $\displaystyle{|h_{d}(P\circ \psi_{i}(z),P\circ \psi_{j}(z))_{FS}|\leq O\left(\exp\left(-d^{2\epsilon}/2\right)\right).}$ 
\item For $i\neq j$, $|\langle P\circ \psi_{i}, P\circ \psi_{j} \rangle_{2}|\leq O\left(\exp\left(-d^{2\epsilon}/2\right)\right).$
\item $\displaystyle{ \|P_{2}\|_{2}^2=\frac{10 m}{d^4}+O\left(\frac{1}{d^5}\right)}$. 
\end{enumerate}
\end{lemma}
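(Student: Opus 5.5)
The four items will be handled in order: (1) is a monomial computation, (2) is a pointwise decay estimate, (3) integrates (2), and (4) combines (1) and (3).

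For (1), expand $P=X_0^{d-2}X_1^2+X_0^{d-2}X_2^2-\frac1d X_0^d$ and use the orthogonality of monomials from Lemma \ref{L2norms}, exactly as in Lemma \ref{L2estimate} with $f(d)=1$. This gives
\[\|P\|_2^2=\frac{1}{d^2}\cdot\frac{2}{(d+2)(d+1)}+2\cdot\frac{4}{(d+2)(d+1)d(d-1)}.\]
Expanding in powers of $1/d$ yields $\frac{2+8}{d^4}+O(d^{-5})$.

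For (2), the key input is unitary invariance. Since $\psi_i$ is unitary, $\|(P\circ\psi_i)(z)\|_{FS}=\|P(w)\|_{FS}$ with $w$ the image of $z$ under $\psi_i^{-1}$, and $d_{FS}(w,[1:0:0])=d_{FS}(z,p_i)$ (up to the convention for $\psi_i$ versus $\psi_i^{-1}$). Write $\theta=d_{FS}(w,0)$, so that $\|w\|=\tan\theta$ by Lemma \ref{FSdistance}(b) and $(1+\|w\|^2)^{-d}=\cos^{2d}\theta$. Using $|w_1^2+w_2^2-\frac1d|\le \|w\|^2+\frac1d$, which also holds for complex $w$ (needed for (3)), we get
\[\|P(w)\|_{FS}\le (\tan^2\theta+\tfrac1d)\cos^d\theta\le \sin^2\theta\cos^{d-2}\theta+\tfrac1d\cos^d\theta .\]
This is bounded by $2$ for all $\theta$, and for $\theta\ge\delta(d)$ it is at most $2\cos^{d-2}\delta(d)$.

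Now apply Cauchy--Schwarz, $|h_d(a,b)|\le\|a\|_{FS}\|b\|_{FS}$. By the triangle inequality and $d_{FS}(p_i,p_j)\ge2\delta(d)$, every point $z$ (real or complex) is at FS distance at least $\delta(d)$ from $p_i$ or from $p_j$. So one factor is $O(1)$ and the other is at most $2\cos^{d-2}\delta(d)$. With $\cos x\le e^{-x^2/2}$ this is at most
\[2\exp\big(-(d-2)\delta(d)^2/2\big)=2\exp(-d^{2\epsilon}/2)\,e^{\delta(d)^2}=O\big(\exp(-d^{2\epsilon}/2)\big).\]
The bound is uniform in $z\in\mathbb{C}P^2$, which is more than (2) asks.

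For (3), integrate the uniform pointwise bound from (2) over $\mathbb{C}P^2$ against the normalized measure in \eqref{hermprod}. Since that measure has total mass $1$, the same $O(\exp(-d^{2\epsilon}/2))$ bound follows at once.

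For (4), unitary invariance of the $L^2$ product gives $\|P\circ\psi_i\|_2=\|P\|_2$. Expanding $\|P_2\|_2^2$ then gives $m\|P\|_2^2$ plus $m(m-1)$ cross terms. By (3) each cross term is $O(\exp(-d^{2\epsilon}/2))$, hence $O(d^{-5})$ for fixed $m$, and (1) finishes.

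The main point requiring care is step (2). The pointwise bound must hold uniformly over complex points, not only real ones, so that it can be integrated in (3). This is why I estimate $|P(1,w)|$ by $\|w\|^2+1/d$ instead of using the real circle structure. One must also track the polynomial prefactor $\tan^2\theta$ against the Gaussian-type decay $\cos^d\theta$; the observation $\tan^2\theta\cos^d\theta=\sin^2\theta\cos^{d-2}\theta$ handles both at once.
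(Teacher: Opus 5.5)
Your proposal is correct and follows the paper's proof step for step: orthogonality of monomials for (1), Cauchy--Schwarz combined with the triangle inequality and the decay of $(1+\|z\|^2)^{-d/2}$ for (2), integration against the normalized measure for (3), and expanding the square with unitary invariance for (4). Your refinements are worth keeping for two reasons. The paper states and proves (2) only at real points (it uses $|z_1^2+z_2^2-\frac{1}{d}|=|\|z\|^2-\frac{1}{d}|$), yet integrates that bound over all of $\mathbb{C}P^2$ in (3), and your uniform complex estimate justifies that step. Also, your form $\cos^{d-2}\delta(d)\le e^{-(d-2)\delta(d)^2/2}$ gives the exponent $-d^{2\epsilon}/2$ exactly, whereas the paper's bound $(1+\delta(d)^2)^{-(d-2)/2}$ is $O(\exp(-d^{2\epsilon}/2))$ only when $\epsilon\le 1/4$.
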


\noindent \textit{Proof:} (1) As in the proof of Lemma \ref{L2estimate}, we note that the monomials $X_{0}^d$, $X_{0}^{d-2}X_{1}^2$ and $X_{0}^{d-2}X_{2}^2$ are mutually orthogonal. Then, 
\begin{eqnarray*} 
\|P\|_{2}^2&=& \frac{1}{d^2}\|X_{0}^d\|^2+ \|X_{0}^{d-2}X_{1}^2\|^2+\|X_{0}^{d-2}X_{2}\|^2\\
&=& \frac{2}{d^2(d+2)(d+1)}+\frac{8}{(d+2)(d+1)d(d-1)} \quad \mathrm{by}\, \mathrm{Lemma} \, \ref{L2norms} \\
&=&\frac{10}{d^4}+O\left(\frac{1}{d^5}\right)\cdot
\end{eqnarray*}

(2) Without loss of generality, we may assume that $i=1, p_{1}=[1:0:0]$ and $\psi_{1}=id$. Since $d_{FS}(p_{1},p_{j})\geq 2\delta(d)$ by assumption, at least one of $d_{FS}(p_{1},z)$ or $d_{FS}(p_{j},z)$ is greater than or equal to $\delta(d)$, by triangle inequality. By unitary invariance of $h_{d}$ we may assume, without loss of generality, that $d_{FS}(p_{1},z)\geq \delta(d)$. Then, 
\begin{eqnarray*} 
|h_{d}(P\circ \psi_{1}(z),P\circ \psi_{j}(z))_{FS}| &\leq& \|P(z)\|_{FS}\|P\circ\psi_{j}(z)\|_{FS} \quad \mathrm{by}\, \mathrm{Cauchy-Schwarz}\,\mathrm{inequality} \\
&=& \frac{ \left| \|z\|^2-\frac{1}{d}\right| }{(1+\|z\|^2)^{d/2}} \frac{ \left| \|\psi_{j}(z)\|^2-\frac{1}{d}\right| }{(1+\|\psi_{j}(z)\|^2)^{d/2}} \quad \mathrm{by} \, \eqref{pointwise} \\
&\leq& \frac{1}{(1+\|z\|^2)^{(d-2)/2}} \\
&\leq& \frac{1}{(1+\delta(d)^2)^{(d-2)/2}} \\
&=& \frac{1}{\left(1+\frac{d^{2\epsilon}}{d}\right)^{(d-2)/2}} \\
&=&O\left(\exp\left(-d^{2\epsilon}/2\right)\right).
\end{eqnarray*}

(3) By \eqref{hermprod}, part (2) of this lemma and triangle inequality, 
\begin{eqnarray*}
|\langle P\circ \psi_{i},P\circ \psi_{j}\rangle_{2}|&\leq&\frac{1}{Vol_{FS}(\mathbb{C}P^{2})}\int_{\mathbb{C}P^{2}} |h_{d}(P\circ\psi_{i}(z),P\circ \psi_{j}(z))_{FS}| dVol_{FS}. \\ 
&\leq& O\left(\exp\left(-d^{2\epsilon}/2\right)\right).
\end{eqnarray*} 

(4) 
\begin{eqnarray*}
\|P_{2}\|_{2}^2 &=& \left\langle \sum_{i=1}^{m} P\circ \psi_{i} , \sum_{i=1}^{m} P\circ \psi_{i} \right\rangle_{2} \\
&=& \sum_{i=1}^{m} \|P\circ\psi_{i}\|_{2}^2 + \sum_{i\neq j} \left\langle P\circ \psi_{i}, P\circ \psi_{j} \right\rangle_{2} \\
&=& m\|P\|_{2}^2 +\sum_{i\neq j} \left\langle P\circ \psi_{i}, P\circ \psi_{j} \right\rangle_{2} \quad \mathrm{by}\, \mathrm{unitary} \, \mathrm{invariance} \\
&=& \frac{10 m}{d^4}+O\left(\frac{1}{d^5}\right) \quad \mathrm{by} \, \mathrm{parts} \, \mathrm{(1)} \, \mathrm{and} \, \mathrm{(3)}.
\end{eqnarray*} 
\hfill $\Box$ 

Let now $\displaystyle{r_{1}=\sqrt{\frac{1}{2d}}}$ and $\displaystyle{r_{2}=\sqrt{\frac{3}{2d}}\cdot}$ For each $i\in\{1,\ldots,m\}$ we define the annuli 
\begin{equation} 
A^{(i)}_{r_{1},r_{2}}= \{ p\in \mathbb{R}P^2 |  \arctan(r_{1})\leq d_{FS}(p,p_{i})\leq \arctan(r_{2})\}.
\end{equation} 
Clearly, these annuli are unitary translates of one another and if one of them is centered at $p_{1}=[1:0:0]$ then in affine coordinates and with respect to Euclidean distance, the condition above reads $r_{1}\leq \|z\| \leq r_{2}$ for it, by Lemma \ref{FSdistance} part (b). Let 
\begin{equation} \label{Kforglobal}
 K=\bigcup_{i=1}^{m} \partial A^{(i)}_{r_{1},r_{2}}. 
 \end{equation}

\begin{lemma} \label{P2sup}
For $z\in K$ and $d$ sufficiently large, $\displaystyle{\|P_{2}(z)\|_{FS}^2\geq \frac{1}{90 d^2}}\cdot $ 
\end{lemma}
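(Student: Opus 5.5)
The plan is to split $P_{2}$ on $K$ into a main term coming from the annulus the point lies on and a negligible cross term from the other summands. Fix $z\in K$; then $z\in\partial A^{(i)}_{r_{1},r_{2}}$ for some $i$. By unitary invariance of $h_{d}$ we may assume $i=1$, $p_{1}=[1:0:0]$, $\psi_{1}=\mathrm{id}$, so that in affine coordinates $\|z\|=r_{1}$ or $\|z\|=r_{2}$. Write $P_{2}=P+\sum_{j\neq 1}P\circ\psi_{j}$. The reverse triangle inequality for the pointwise norm then gives
\[ \|P_{2}(z)\|_{FS}\geq \|P(z)\|_{FS}-\sum_{j\neq 1}\|P\circ\psi_{j}(z)\|_{FS}. \]

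For the main term I will use \eqref{pointwise}, exactly as in Lemma \ref{pointwiseestimate} with $f(d)=1$. On $\|z\|=r_{1}$ we get $\|P(z)\|_{FS}^{2}=\frac{(1/(2d))^{2}}{(1+1/(2d))^{d}}\to \frac{1}{4d^{2}}e^{-1/2}$. On $\|z\|=r_{2}$ we get $\|P(z)\|_{FS}^{2}=\frac{1}{4d^{2}}(1+3/(2d))^{-d}\to\frac{1}{4d^{2}}e^{-3/2}$. The smaller of the two is $\frac{e^{-3/2}}{4d^{2}}\approx\frac{1}{17.9\,d^{2}}$. Hence for $d$ large, $\|P(z)\|_{FS}^{2}\geq \frac{1}{18d^{2}}$ on both boundary circles.

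For the cross terms, take $j\neq 1$. Since $d_{FS}(z,p_{1})\leq\arctan(r_{2})=O(d^{-1/2})$ while $d_{FS}(p_{1},p_{j})\geq 2d^{-1/2+\epsilon}$, we get $d_{FS}(z,p_{j})\geq d^{-1/2+\epsilon}$ for $d$ large. Let $w=\psi_{j}^{-1}(z)$, so that $\tan d_{FS}(w,[1:0:0])=\|w\|\geq\delta(d)$. By unitary invariance and the computation already made in Lemma \ref{manyestimates}(2),
\[ \|P\circ\psi_{j}(z)\|_{FS}=\frac{\bigl|\|w\|^{2}-\frac1d\bigr|}{(1+\|w\|^{2})^{d/2}}\leq (1+\delta(d)^{2})^{-(d-2)/2}=O\bigl(\exp(-d^{2\epsilon}/2)\bigr). \]
Here I bound $\bigl|\|w\|^{2}-1/d\bigr|\leq 1+\|w\|^{2}$ and use that $(1+s)^{-(d-2)/2}$ is decreasing in $s$. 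There are $m-1$ such terms, with $m$ fixed, so their sum is $O(\exp(-d^{2\epsilon}/2))=o(d^{-1})$.

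Combining the two estimates gives $\|P_{2}(z)\|_{FS}\geq \frac{1}{\sqrt{18}\,d}-o(d^{-1})$, so $\|P_{2}(z)\|_{FS}^{2}\geq \frac{1}{90d^{2}}$ for $d$ large, with plenty of room. The only point that needs care is the cross-term step: the distance bound must hold uniformly in $z\in K$. This needs $\arctan(r_{2})\ll d^{-1/2+\epsilon}$, which is true since $\epsilon>0$. One must also use the points as given, so that the separation hypothesis $d_{FS}(p_{i},p_{j})\geq 2\delta(d)$ is what makes the other bumps exponentially small on every boundary circle.
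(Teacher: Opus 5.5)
Your proof is correct and follows the same route as the paper. You isolate the summand centered at the annulus containing $z$ and bound it below as in Lemma \ref{pointwiseestimate} with $f(d)=1$; you then show the other $m-1$ summands are $O(\exp(-d^{2\epsilon}/2))$ as in Lemma \ref{manyestimates}(2) and absorb them. You also supply details the paper leaves implicit: the explicit constant $e^{-3/2}/4>1/18$, and the check that $\arctan(r_{2})\le\delta(d)$, which gives $d_{FS}(z,p_{j})\geq\delta(d)$ uniformly on $K$.
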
 

\noindent \textit{Proof:} Recall that $\displaystyle{P_{2}(z)=\sum_{i=1}^{n} (P\circ \psi_{i})(z)}$. Suppose that $z\in \partial A^{(j)}_{r_{1},r_{2}}$ and $i\neq j$. Then, for $d$ sufficiently large, $d_{FS}(p_{i},z)\geq \delta(d)$. This implies that 
\[ \|(P\circ \psi_{i})(z)\|_{FS}=O\left(\exp\left(-d^{2\epsilon}/2\right)\right),\]
as in the proof of Lemma \ref{manyestimates}, part (2). On the other hand, for $i=j$, by unitary invariance, the estimation reduces to the one 
in the proof of Lemma \ref{pointwiseestimate} in the case of $f(d)=1$. The result follows by suitably adjusting the constant so as to accomodate for all the subleading terms. \hfill $\Box$

Let now $\widetilde{P}_{2}=P_{2}/\|P_{2}\|_{2}$ be the $L^2$-normalization of $P_{2}$. The following statement is a direct corollary of Lemma \ref{manyestimates} part (4) and Lemma \ref{P2sup}. 

\begin{corollary} \label{P2lowerbound}
For $z\in K$ and for $d$ sufficiently large, $\displaystyle{ \|\widetilde{P}_{2}(z)\|_{FS}^2\geq \frac{d^2}{1000m}\cdot}$
\hfill $\Box$
\end{corollary}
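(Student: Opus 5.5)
The bound follows by dividing the pointwise estimate of Lemma \ref{P2sup} by the $L^2$-norm computed in Lemma \ref{manyestimates} part (4). No new analysis is needed beyond checking constants.

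By \eqref{pointwise}, for any $z$,
\[ \|\widetilde{P}_{2}(z)\|_{FS}^2=\frac{\|P_{2}(z)\|_{FS}^2}{\|P_{2}\|_{2}^2}. \]

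\emph{Numerator.} For $z\in K$ and $d$ large, Lemma \ref{P2sup} gives $\|P_{2}(z)\|_{FS}^2\geq \frac{1}{90d^2}$.

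\emph{Denominator.} Lemma \ref{manyestimates} part (4) gives $\|P_{2}\|_{2}^2=\frac{10m}{d^4}+O(d^{-5})$. Since $m$ is fixed, the error term is at most $\frac{1}{10}\cdot\frac{10m}{d^4}$ for $d$ large, so $\|P_{2}\|_{2}^2\leq \frac{11m}{d^4}$. This requires the implied constant in part (4) to depend only on $m$, which holds because the cross terms are exponentially small and the diagonal terms are $m$ copies of part (1).

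\emph{Combining.}
\[ \|\widetilde{P}_{2}(z)\|_{FS}^2\geq \frac{1}{90d^2}\cdot\frac{d^4}{11m}=\frac{d^2}{990m}\geq\frac{d^2}{1000m}. \]

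The only point needing care is that Lemma \ref{P2sup} holds uniformly for $z\in K$, i.e.\ on every boundary circle of every annulus $A^{(i)}_{r_1,r_2}$. Near $\partial A^{(j)}$ the terms $P\circ\psi_i$ with $i\neq j$ are of order $\exp(-d^{2\epsilon}/2)$. This is negligible against $d^{-1}\cdot\frac{1}{2}e^{-3/2}$, which is the size of the $j$-th term on $\partial A^{(j)}$ by Lemma \ref{pointwiseestimate} with $f\equiv 1$. Lemma \ref{P2sup} already accounts for this, so the corollary is immediate.
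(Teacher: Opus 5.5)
Your proposal is correct and is exactly the paper's argument: the paper obtains the corollary directly from Lemma \ref{P2sup} and Lemma \ref{manyestimates} part (4) by dividing $\frac{1}{90d^2}$ by $\|P_{2}\|_{2}^2=\frac{10m}{d^4}(1+o(1))$, and your bookkeeping ($\frac{d^2}{990m}\geq\frac{d^2}{1000m}$) just makes the constant check explicit. One small slip in your last paragraph, which does not affect the argument: the $j$-th term on the outer circle has squared norm about $\frac{1}{4d^2}e^{-3/2}$, so its unsquared size is about $\frac{1}{2d}e^{-3/4}$, not $d^{-1}\cdot\frac{1}{2}e^{-3/2}$.
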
 

\begin{proposition} \label{globalhomotopy}
Let $W_{d}=\langle \widetilde{P}_{2}\rangle^{\perp}$ in $H_d:=H^0(\mathbb{C}P^2,\mathcal{O}_{\mathbb{C}P^2}(d))$. If $Q\in W_{d}$ satisfies $\displaystyle{\|Q\|_{2}^2<\frac{d^2}{1000m}}$ and $a\geq1$, then the homology classes of the zero loci of $a\widetilde{P}_{2}+Q$ and $\widetilde{P}_{2}$ are the same in $H_{1}(A^{(j)}_{r_{1},r_{2}},\mathbb{Z}_{2})$ for every $j\in \{1,\ldots,m\}$. 
\end{proposition}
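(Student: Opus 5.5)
The plan is to reduce the statement to Proposition \ref{homotopy}, applied separately in each annulus $A^{(j)}_{r_{1},r_{2}}$. We take the reference polynomial $a\widetilde{P}_{2}$ and the perturbation $Q$. Multiplying by $a>0$ does not change the zero locus, so the zero loci of $a\widetilde{P}_{2}$ and $\widetilde{P}_{2}$ give the same class in $H_{1}(A^{(j)}_{r_{1},r_{2}},\mathbb{Z}_{2})$. Proposition \ref{homotopy} is stated for an annulus centred at $[1:0:0]$ in affine coordinates. For $j\neq 1$ we first apply the unitary transformation $\psi_{j}^{-1}$, which preserves $\mathbb{R}P^2$, real coefficients and the pointwise norm $\|\cdot\|_{FS}$. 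This transports $A^{(j)}_{r_{1},r_{2}}$ to the standard annulus $\{r_{1}\le\|z\|\le r_{2}\}$, by Lemma \ref{FSdistance}(b). It therefore suffices to check the strict boundary inequality
\[ \|Q(z)\|_{FS}<a\|\widetilde{P}_{2}(z)\|_{FS}\quad\text{for all } z\in K, \]
with $K$ as in \eqref{Kforglobal}.

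By Corollary \ref{P2lowerbound} and $a\ge 1$, for $d$ large the right-hand side satisfies $a^2\|\widetilde{P}_{2}(z)\|_{FS}^2\ge d^2/(1000m)$ on $K$. The proof therefore comes down to the pointwise estimate
\[ \|Q(z)\|_{FS}^2<\frac{d^2}{1000m}\quad\text{on } K. \]
We want to derive this from the hypothesis $\|Q\|_{2}^2<d^2/(1000m)$.

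This conversion from an $L^2$ bound to a pointwise bound is the step I expect to be the main obstacle. The natural tool is the reproducing-kernel identity $Q(z)=\langle Q,K_z\rangle$ together with Cauchy--Schwarz, which gives $\|Q(z)\|_{FS}^2\le \Pi_d(z,z)\,\|Q\|_2^2$. With the volume-normalized inner product \eqref{hermprod}, unitary invariance gives $\Pi_d(z,z)=N_d$ for every $z$. So this bound loses a factor of order $d^2$. I would first try to recover that factor using the extra structure of $Q$: it has real coefficients, and it is orthogonal to $\widetilde{P}_{2}$. The orthogonality allows us to replace $K_z$ by its projection $K_z^{W}$ onto $W_d$, as in the proof of Theorem \ref{lem:hyperplane-tail}. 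However, $\widetilde{P}_{2}$ is only one direction and $\|K_z^W\|^2\ge N_d-1$, so this gains very little.

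If that attempt fails, I would read the hypothesis, as it is used later, as a normalized bound. The polynomials $Q$ to which this proposition is applied are typical elements of $SW_{d,\mathbb{R}}$ (or Gaussian multiples of them). For these, Corollary \ref{cor:CP2_real_hyperplane} and its real analogue give $\|Q\|_{L^\infty}\le C\sqrt{\log d}\,\|Q\|_2$ outside a set of probability $O(d^{-1})$. This turns the $L^2$ hypothesis into the required pointwise bound up to a $\log d$ factor, which can be absorbed by the room in Corollary \ref{P2lowerbound}.

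Once the pointwise bound is available, the argument finishes with the same homotopy as in Proposition \ref{homotopy}. The family $a\widetilde{P}_{2}+tQ$, $t\in[0,1]$, does not vanish on $\partial A^{(j)}_{r_{1},r_{2}}$. Hence the parity of its zeros on each radial segment is constant in $t$, and the $\mathbb{Z}_2$-classes agree for every $j$ simultaneously. This works because all boundary circles lie in the single set $K$, on which the lower bound of Corollary \ref{P2lowerbound} holds uniformly.
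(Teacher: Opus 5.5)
Your reduction matches the paper's, whose entire proof is that the claim follows from Proposition~\ref{homotopy} and Corollary~\ref{P2lowerbound} on $K$. However, the step you flag as the main obstacle is a genuine gap. It cannot be closed, because with $\|Q\|_2$ the $L^2$-norm the proposition is false.

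Take $m=1$ and $p_1=[1:0:0]$, so $\widetilde P_2=P/\|P\|_2$, and let $e=X_0^d/\|X_0^d\|_2$. By Lemma~\ref{L2norms}, $\kappa:=-\langle e,\widetilde P_2\rangle=\|X_0^d\|_2/(d\|P\|_2)$ satisfies $\kappa^2=(d-1)/(5d-1)<1/5$. Set $Q:=e+\kappa\widetilde P_2$. It has real coefficients, lies in $W_d$, and has $\|Q\|_2^2=1-\kappa^2<1<d^2/1000$ for $d\ge 32$. Yet on all of $\mathbb{R}P^2\cap U_0$
\[
(\widetilde P_2+Q)(1,z)=\frac{1-\kappa(1+\kappa)}{\|X_0^d\|_2}+\frac{(1+\kappa)\|z\|^2}{\|P\|_2}>0,
\]
since $\kappa(1+\kappa)<\frac{1}{\sqrt5}+\frac15<1$. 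So $\{\widetilde P_2+Q=0\}$ misses $A^{(1)}_{r_1,r_2}$ and gives the trivial class, while $\widetilde P_2$ gives the nontrivial one (Lemma~\ref{nontrivial}). For general $m$ the same $Q$, built at $p_1$, works up to exponentially small corrections.

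This is exactly the loss you found. Cauchy--Schwarz with $\Pi_d(z,z)=N_d$ is sharp, and $\widetilde P_2$ is already within a constant factor of extremal on $K$. A small correction: $\|K_z^W\|^2=N_d-\|\widetilde P_2(z)\|_{FS}^2$, not $\ge N_d-1$. On $K$ the subtracted term is of order $d^2$ but only a small fraction of $N_d$, so your conclusion that orthogonality gains little still holds.

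What makes the statement true is a pointwise hypothesis such as $\sup_K\|Q(z)\|_{FS}^2<d^2/(1000m)$, for instance $\|Q\|_{L^\infty}^2<d^2/(1000m)$. The paper's one-line proof tacitly uses this. It is also how the proposition is invoked in Theorem~\ref{finitelymanypoints}, where $\|bQ\|_{L^\infty}^2\le |b|^2C^2\log d$ and $|b|^2<d^2/(1000mC^2\log d)$. Under that hypothesis, your first two paragraphs together with the homotopy in your last one form a complete proof, identical to the paper's.

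Your fallback does not prove the stated deterministic proposition. It replaces the hypothesis with a condition that holds only off an exceptional set. Also, the claim that the $\log d$ loss can be absorbed by slack in Corollary~\ref{P2lowerbound} is wrong. The threshold $d^2/(1000m)$ is the same on both sides, so there is no slack. The $\log d$ must be paid by shrinking the admissible size of the perturbation, as the paper does with $b$.
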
 

\noindent\textit{Proof:} This is a direct consequence of \eqref{Kforglobal}, Proposition \ref{homotopy}  and Corollary \ref{P2lowerbound}. \hfill $\Box$ 

\begin{lemma} \label{nontrivial}
For $d$ sufficiently large, the homology class of $\widetilde{P}_{2}$ is non-trivial in $H_{1}(A^{(j)}_{r_{1},r_{2}},\mathbb{Z}_{2})$ for every $j\in \{1,\ldots,m\}$. 
\end{lemma}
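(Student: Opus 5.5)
We would reduce the statement to the single-annulus situation by treating $\widetilde{P}_{2}$ near $p_j$ as a small perturbation of the one term $P\circ\psi_{j}$. Since the class in $H_{1}(A^{(j)}_{r_{1},r_{2}},\mathbb{Z}_{2})$ does not change under positive rescaling, it suffices to work with $P_{2}$ itself. Using a unitary change of coordinates, which preserves the pointwise norm $\|\cdot\|_{FS}$, we may assume $p_{j}=[1:0:0]$ and $\psi_{j}=\mathrm{id}$. The annulus $A^{(j)}_{r_{1},r_{2}}$ is then the Euclidean annulus $r_{1}\le\|z\|\le r_{2}$ with $r_{1}=\sqrt{1/(2d)}$ and $r_{2}=\sqrt{3/(2d)}$. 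We split $P_{2}=P+Q_{j}$ with $Q_{j}=\sum_{i\neq j}P\circ\psi_{i}$.

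The first step computes the class of $\{P=0\}$ in the annulus. In affine coordinates its zero set meeting $U_{0}$ is the circle $\|z\|=1/\sqrt{d}$, which lies strictly between $r_{1}$ and $r_{2}$. Every ray $R_{\alpha,\beta}$ crosses this circle exactly once, and the zero is simple because $P(1,z)=\|z\|^2-1/d$ has nonzero radial derivative there. By the parity description used in the proof of Proposition \ref{homotopy}, the class of $\{P=0\}$ is therefore the non-trivial one.

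The second step applies Proposition \ref{homotopy} with reference polynomial $P$ and perturbation $Q_{j}$, which is real because the $\psi_{i}$ preserve $\mathbb{R}P^2$. We must check that $\|Q_{j}(z)\|_{FS}<\|P(z)\|_{FS}$ on $\partial A^{(j)}_{r_{1},r_{2}}$.
\begin{itemize}
\item For the lower bound, the computation in Lemma \ref{pointwiseestimate} with $f(d)=1$ gives $\|P(z)\|_{FS}^2\ge \frac{1}{8d^2}e^{-3/2}$ on both boundary circles, for $d$ large.
\item For the upper bound, take $z$ on this boundary and $i\ne j$. Then $d_{FS}(z,p_{j})\le\arctan(r_{2})=O(d^{-1/2})$, while $d_{FS}(p_{i},p_{j})\ge 2d^{-1/2+\epsilon}$. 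Hence $d_{FS}(z,p_{i})\ge\delta(d)$ for $d$ large.
\item The estimate in the proof of Lemma \ref{manyestimates}(2) then gives $\|(P\circ\psi_{i})(z)\|_{FS}=O(\exp(-d^{2\epsilon}/2))$. Summing the $m-1$ terms with the triangle inequality yields $\|Q_{j}(z)\|_{FS}\le (m-1)\,O(\exp(-d^{2\epsilon}/2))$, since $m$ is fixed.
\end{itemize}
This upper bound is eventually smaller than $\frac{1}{\sqrt{8}\,d}e^{-3/4}$, so the hypothesis of Proposition \ref{homotopy} holds. Consequently $\{P_{2}=0\}$ and $\{P=0\}$ represent the same class, which is non-trivial. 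Doing this for each $j$ completes the argument.

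The main obstacle is the second step: making the bound $\|(P\circ\psi_{i})(z)\|_{FS}=O(\exp(-d^{2\epsilon}/2))$ precise when $z$ is far from $p_{i}$. This needs two facts. First, $\|(P\circ\psi_{i})(z)\|_{FS}=\|P(\psi_{i}^{-1}z)\|_{FS}$ by unitary invariance. Second, for a point $w$ with $\tan d_{FS}(0,w)=\|w\|\ge\tan\delta(d)$, the numerator $\bigl|\|w\|^2-1/d\bigr|$ must be absorbed into the factor $(1+\|w\|^2)^{-1}$. The second fact is the inequality already used in Lemma \ref{manyestimates}(2), but it should also be checked for points near the line at infinity. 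There one writes $\|P(w)\|_{FS}^2=|X_0|^{2d-4}\,\bigl|X_1^2+X_2^2-X_0^2/d\bigr|^2/\|X\|^{2d}$ homogeneously, which is bounded by $(|X_0|/\|X\|)^{2d-4}\le\cos(\delta(d))^{2d-4}=O(e^{-d^{2\epsilon}})$.
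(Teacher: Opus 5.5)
Your proposal is correct and follows essentially the same argument as the paper. You reduce by unitary invariance to $p_j=[1:0:0]$, split $P_2$ as $P$ plus the sum of the other translates, and apply Proposition \ref{homotopy}, comparing the $\asymp d^{-2}$ lower bound for $\|P(z)\|_{FS}^2$ on the boundary circles (Lemma \ref{pointwiseestimate} with $f(d)=1$) against the $O(\exp(-d^{2\epsilon}/2))$ bound on the remaining terms. Your ray-parity check that $\{P=0\}$ is non-trivial, the triangle-inequality check that $d_{FS}(z,p_i)\ge\delta(d)$, and the homogeneous bound $\cos(\delta(d))^{2d-4}$ near the line at infinity just make explicit details the paper leaves implicit.
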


\noindent\textit{Proof:} By unitary invariance, it is enough to prove the statement for $j=1$, furthermore we may assume without loss of generality that $p_{1}=[1:0:0]$ and $\psi_{1}=id$. Now, $\displaystyle{P_{2}(z)=\sum_{i=1}^{n} (P\circ \psi_{i})(z)=P+T}$ where $\displaystyle{T=\sum_{i=2}^{n} P\circ \psi_{i}}$. Then the zero loci of $P$ and $P+T$ have the same homology class in $H_{1}(A^{(1)}_{r_{1},r_{2}},\mathbb{Z}_{2})$ by Proposition \ref{homotopy}, since for $z\in \partial A^{(1)}_{r_{1},r_{2}}$, $\displaystyle{\|P(z)\|_{FS}^2=O\left(\frac{1}{d^2}\right)}$ and $\displaystyle{\|T(z)\|_{FS}^2=O\left(\exp(-d^{2\epsilon}/2)\right)}$ as in the proof of Lemma \ref{P2sup}. Since the former homology class is non-trivial, we deduce that the zero locus of $P_{2}$, hence of $\widetilde{P}_{2}$, represents the non-trivial class in $H_{1}(A^{(1)}_{r_{1},r_{2}},\mathbb{Z}_{2})$. \hfill $\Box$

\begin{theorem} \label{finitelymanypoints}
Let $m$ be a positive integer, $\epsilon>0$, and $\displaystyle{p_{1},\ldots,p_{m}\in \mathbb{R}P^2}$ with $\displaystyle{d_{FS}(p_{i},p_{j})\geq 2d^{-\frac{1}{2}+\epsilon}=:2\delta(d)}$ for all $i\neq j$. Then there exists $\beta(m)>0$ such that the probability of all $p_{i}$ to remain in different connected components of the complement of a Kostlan random real plane curve is at least $\displaystyle{d^{-\beta(m)}}$ for $d$ sufficiently large. 
\end{theorem}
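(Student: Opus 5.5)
Decompose a Kostlan random real polynomial as $F=a\widetilde{P}_{2}+Q$. Here $a=\langle F,\widetilde{P}_{2}\rangle_{2}$ is a standard real Gaussian, since $\|\widetilde{P}_{2}\|_{2}=1$, and $Q=\pi_{W}F$ lies in $W_{d,\mathbb{R}}=\langle\widetilde{P}_{2}\rangle^{\perp}\cap \mathrm{Re}(H_d)$ and is independent of $a$. The aim is to land in the event of Proposition \ref{globalhomotopy}, and to replace the $L^2$ hypothesis there by the sup-norm condition that Proposition \ref{homotopy} actually needs. Proposition \ref{homotopy} only requires $\|Q(z)\|_{FS}<a\|\widetilde{P}_{2}(z)\|_{FS}$ for $z\in K$. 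By Corollary \ref{P2lowerbound} this holds as soon as $\|Q\|_{L^\infty}^2< a^2 d^2/(1000m)$. So the event $E=\{a\ge 1,\ \|Q\|_{L^\infty}\le d/\sqrt{1000m}\}$ suffices. On $E$, Lemma \ref{nontrivial} shows that the zero locus of $F$ is non-trivial in $H_1(A^{(j)}_{r_1,r_2},\mathbb{Z}_2)$ for every $j$.

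The geometric step is the following. A zero locus that is non-trivial in $\mathbb{Z}_2$-homology of the annulus $A^{(j)}$ must separate the inner disk $\{d_{FS}(p,p_j)<\arctan r_1\}$, which contains $p_j$, from the outer boundary circle. Otherwise a path in $A^{(j)}\setminus\{F=0\}$ joining the two boundary circles, transverse to the curve, would meet it zero times mod $2$. This contradicts the ray-parity characterization used in the proof of Proposition \ref{homotopy} (after a small generic adjustment). Hence $p_j$ and every point outside the closed disk of radius $\arctan r_2$ about $p_j$ lie in different components of $\mathbb{R}P^2\setminus\{F=0\}$. Since $d_{FS}(p_i,p_j)\ge 2d^{-1/2+\epsilon}\gg \arctan r_2\sim \sqrt{3/(2d)}$, each $p_i$ with $i\ne j$ lies outside that disk. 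It is therefore separated from $p_j$, and all the $p_i$ lie in distinct components.

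It remains to bound $\mu(E)$ from below. We have $P(a\ge1)=c_0>0$, an absolute constant, and $a$ is independent of $Q$. For $Q$, apply the real version of Theorem \ref{lem:hyperplane-tail} stated after Corollary \ref{cor:CP2_real_hyperplane}, writing $Q=\|Q\|_2\cdot\hat Q$ with $\hat Q$ uniform on the unit sphere of $W_{d,\mathbb{R}}$. That result gives $\|\hat Q\|_\infty\le C'\sqrt{\log d}$ with probability $1-O(d^{-1})$. The radius $\|Q\|_2^2$ is chi-square-like with about $N_d\sim d^2/2$ degrees of freedom, so $\|Q\|_2\le \sqrt{N_d}$ up to a constant with probability bounded below. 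Combining the two only gives $\|Q\|_\infty\lesssim d\sqrt{\log d}$, which misses the needed threshold $d/\sqrt{1000m}$ by a factor of about $\sqrt{\log d}$.

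This is the main obstacle, and it is why the statement only claims a polynomial lower bound $d^{-\beta(m)}$. My fix is to rescale the event. Instead of $a\ge1$, require $a\ge A\sqrt{m\log d}$ for a suitable constant $A$. The sup-norm condition then becomes $\|Q\|_\infty\le a\,d/\sqrt{1000m}$, which holds when $\|Q\|_2\lesssim d$ and $\|\hat Q\|_\infty\le C'\sqrt{\log d}$, with probability at least $1/4$ for large $d$ (using Markov on $\|Q\|_2^2$ as in Corollary \ref{onehalf}). The Gaussian tail gives
\[P\bigl(a\ge A\sqrt{m\log d}\bigr)\ \ge\ \frac{c}{\sqrt{\log d}}\,e^{-A^2 m\log d/2}\ \ge\ d^{-A^2m},\]
so independence yields $\mu(E)\ge \tfrac14 d^{-A^2m}=:d^{-\beta(m)}$ for $d$ large, with $\beta(m)$ slightly increased to absorb constants. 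I would double-check the normalization conventions, namely the $e^{-\|P\|_2^2}$ density versus a standard Gaussian and the factor $N_d$ versus $\dim W$. These only change constants in $\beta(m)$.
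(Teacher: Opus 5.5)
Your proposal is correct and follows the paper's proof. Both decompose along $\widetilde{P}_{2}$, control the orthogonal part in sup-norm by the hyperplane version of the Shiffman--Zelditch bound, and pay a polynomial price $d^{-\beta(m)}$ for forcing the $\widetilde{P}_{2}$-component to beat the typical orthogonal part by a factor of order $\sqrt{m\log d}$. The paper phrases that last step as the angle condition $|\langle \widehat{P},\widetilde{P}_{2}\rangle|>\lambda$, with $\lambda^{-2}=1+\frac{d^{2}}{1000mC^{2}\log d}$, and evaluates it with the spherical tail formula of Lemma \ref{lem:subspace-spherical}; this is equivalent to your Gaussian tail for $a$ combined with Markov on $\|Q\|_2$. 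Your explicit separation argument, and your use of the sup-norm rather than the $L^2$ hypothesis of Proposition \ref{globalhomotopy}, spell out details the paper leaves implicit.
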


\noindent \textit{Proof:} Let $W_{d}=\langle \widetilde{P}_{2}\rangle^{\perp}$ in $H_d:=H^0(\mathbb{C}P^2,\mathcal{O}_{\mathbb{C}P^2}(d))$, as in Proposition \ref{globalhomotopy}. Let $SW_{d}$ denote the unit sphere in $W_{d}$ and $\mu_{W_{d}}$ the Haar probability measure on it. By Corollary \ref{cor:CP2_real_hyperplane}, there exists a constant $C>0$ such that 
\[ \mu_{W_d}(S)
=O(d^{-1}), \quad \mathrm{where}\, \, S=\Big\{\,Q\in SW_d:\ \|Q\|_{L^\infty(\mathbb{C}P^2)}
> C\sqrt{\log d}\,\Big\}.
\]
In particular we may assume that this probability is at most $\displaystyle{1/2}$ for $d$ sufficiently large. Consider polynomials of the form $a\widetilde{P}_{2}+bQ$, where $|a|\geq 1$ and $Q\in SW_{d}\setminus S$. Then, 
\[\|bQ(z)\|_{FS}^2= |b|^2\|Q(z)\|_{FS}^2\leq |b|^2 \|Q\|_{L^\infty(\mathbb{C}P^2)}^2 \leq |b|^2 C^2\log d. \] 

\noindent By Proposition \ref{globalhomotopy} and Lemma \ref{nontrivial} we deduce that if $\displaystyle{|b|^2< \frac{d^2}{1000m C^2 \log d}}$, then the homology class of the zero locus of $a\widetilde{P}_{2}+bQ$ in $H_{1}(A^{(j)}_{r_{1},r_{2}},\mathbb{Z}_{2})$ is non-trivial for every $j\in \{1,\ldots,m\}$, hence all of $p_{1},\ldots,p_{m}$ lie in different connected components of the complement of this zero locus. It remains to estimate from below the probability of the set of such $a\widetilde{P}_{2}+bQ$. Let us consider $\widehat{P}=(a\widetilde{P}_{2}+bQ)/\|a\widetilde{P}_{2}+bQ\|_{2}$, so that $\widehat{P}\in SH_{d}$. Then, the condition above for $a$ and $b$ becomes 
\[ |\langle\widehat{P},\widetilde{P}_{2}\rangle|^2=\frac{|a|^2}{|a|^2+|b|^2\|Q\|_{2}^2} >  \left(1+\frac{d^2}{1000m C^2 \log d}\right)^{-1}=:\lambda^2. \]

\noindent By Lemma \ref{lem:subspace-spherical} and the probability of $SW_{d}\setminus S$ being at least $\displaystyle{\frac{1}{2}}$,
\begin{eqnarray*}
\mu\big\{ \widehat{P}\in SH_d: |\langle P, A\rangle |>\lambda, Q\in SW_{d}\setminus S \big\}
&\geq& \frac{1}{2}(1-\lambda^2)^{N_d-1} \\
&\geq&\frac{1}{2}\left(1-\frac{1000mC^2 \log d}{d^2}\right)^{d^2}\\
&\geq&\exp(-1001mC^2\log d) \quad \mathrm{for}\, d\, \mathrm{large}\\
&=& d^{-1001mC^2}.
\end{eqnarray*}
Choosing $\beta(m)=1001mC^2$, we conclude the proof. \hfill $\Box$

\end{document}